\newcommand{\norm}[1]{\left\lVert#1\right\rVert}
\newcommand{\abs}[1]{ \left\lvert#1\right\rvert}
\DeclarePairedDelimiter\ceil{\lceil}{\rceil}
\DeclarePairedDelimiter\floor{\lfloor}{\rfloor}
\newtheorem{theorem}{Theorem}
\newtheorem*{theorem*}{Theorem}
\newtheorem{lemma}[theorem]{Lemma}
\newtheorem{proposition}[theorem]{Proposition}
\newtheorem{claim}[theorem]{Claim}
\newtheorem{remark}[theorem]{Remark}
\newtheorem{notation}{Notation}
\theoremstyle{definition}
\newtheorem{definition}[theorem]{Definition}
\newtheorem*{definition*}{Definition}
\newtheorem*{lemma*}{Lemma}
\numberwithin{equation}{section}
\numberwithin{theorem}{section}
\DeclareDocumentCommand\Pr{ m g }{%
    {   \IfNoValueTF {#2}
      {\mathbb{P}\left[{#1}\right]}
      {\mathbb{P}\left[{#1}\middle\vert{#2}\right]}%
    }
}
\DeclareDocumentCommand\E{ m g }{%
    {   \IfNoValueTF {#2}
      {\mathbb{E}\left[{#1}\right]}
      {\mathbb{E}\left[{#1}\middle\vert{#2}\right]}%
    }
}
\newcommand{\R}{\mathbb{R}}
\newcommand{\N}{\mathbb{N}}
\newcommand{\Z}{\mathbb{Z}}
\newcommand{\eps}{\varepsilon}
\DeclareMathOperator{\PBA}{PBA}
\DeclareMathOperator{\DOM}{DOM}
\DeclareMathOperator{\END}{END}
\DeclareMathOperator{\STR}{STR}
\DeclareMathOperator{\irr}{irr}
\DeclareMathOperator{\domain}{domain}
\DeclareMathOperator{\diff}{diff}
\DeclareMathOperator{\troot}{root}
\DeclareMathOperator{\depth}{depth}
\DeclareMathOperator{\tstring}{String}
\DeclareMathOperator{\stree}{Tree}
\DeclareMathOperator{\IND}{ind}
\newcommand{\arr}[2]{\vec{#1}^{\;#2}}
\DeclareMathOperator{\supp}{supp}
\begin{document}

\title[]{Quasi-Regular Sequences}
\author[]{Joshua Frisch}
\author[]{Wade Hann-Caruthers}
\author[]{Pooya Vahidi Ferdowsi}
\address{California Institute of Technology}

\date{\today}

\begin{abstract}
Let $\Sigma$ be a countable alphabet. For $r\geq 1$, an infinite sequence $s$ with characters from $\Sigma$ is called $r$-quasi-regular, if for each $\sigma\in\Sigma$ the ratio of the longest to shortest interval between consecutive occurrences of $\sigma$ in $s$ is bounded by $r$. In this paper, we answer a question asked in ~\cite{kempe2018quasi}, and prove that for any probability distribution $\mathbf{p}$ on a finite alphabet $\Sigma$, there exists a $2$-quasi-regular infinite sequence with characters from $\Sigma$ and density of characters equal to $\mathbf{p}$. We also prove that as $\norm{\mathbf{p}}_\infty$ tends to zero, the infimum of $r$ for which $r$-quasi-regular sequences with density $\mathbf{p}$ exist, tends to one. This result has a corollary in the Pinwheel Problem: as the smallest integer in the vector tends to infinity, the density threshold for Pinwheel schedulability tends to one.
\end{abstract}

\maketitle

\section{Introduction}

Let $\Sigma$ be a countable alphabet, and $\mathbf{p}$ be a probability distribution on $\Sigma$. An infinite sequence $s\in\Sigma^{\N}$ has density $\mathbf{p}$, if for each $\sigma\in\Sigma$, the frequency of occurrences of $\sigma$ in the first $n$ characters of $s$ tends to $\mathbf{p}_\sigma$ as $n$ goes to infinity, i.e.\ if $\lim_{n\to\infty} \frac{1}{n}\abs{s^{-1}(\sigma)\cap[n]} = \mathbf{p}_\sigma$ for each $\sigma\in\Sigma$, where $[n] = \{1,\ldots,n\}$ for each $n\in\N$. If $s\in\Sigma^{\N}$ has density $\mathbf{p}$, we write $d_s=\mathbf{p}$. There are different notions of regularity for such sequences, and for each notion, a natural question to ask is how regular could sequences with density $\mathbf{p}$ be.

One notion of regularity is that of discrepancy. Discrepancy of a sequence $s\in\Sigma^{\N}$ is defined as $\sup_{\sigma\in\Sigma, n\in\N} \abs{\abs{s^{-1}(\sigma)\cap[n]} - n \mathbf{p}_\sigma}$.
This notion has been extensively studied ~\cite{tijdeman1973distribution, spencer1982sequences, angel2009discrete, tijdeman1980chairman} and a result by Tijdeman ~\cite{tijdeman1973distribution} shows that for any countable alphabet $\Sigma$ and for any probability distribution $\mathbf{p}$ on $\Sigma$, there is a sequence $s\in\Sigma^{\N}$ with density equal to $\mathbf{p}$ and discrepancy less than or equal to $1$. Later in this paper, we will use this result in some of our proofs.

Another notion is quasi-regularity. For $r\geq 1$, a sequence $s\in\Sigma^{\N}$ is called $r$-quasi-regular if for each $\sigma\in\Sigma$ the ratio of the longest to shortest interval between consecutive occurrences of $\sigma$ is bounded by $r$ ~\cite{kempe2018quasi}. The question is to find, for a $\Sigma$ and $\mathbf{p}$, the least $r$ with a $r$-quasi-regular sequence $s\in\Sigma^{\N}$ with density $\mathbf{p}$.

For example, if $\Sigma$ is finite and $\mathbf{p}$ is the uniform distribution on $\Sigma$, then the periodic sequence with distance between consecutive occurrences of each character equal to $\abs{\Sigma}$ gives us a $1$-quasi-regular sequence with density $\mathbf{p}$. As another example, if all the probabilities in $\mathbf{p}$ are powers of two, again, there is a $1$-quasi-regular sequence with density $\mathbf{p}$. Kempe, Schulman, and Tamuz in ~\cite{kempe2018quasi}*{Proposition 4.6} show that for $\mathbf{p}=(1/2,1/3,1/6)$ there are no $r$-quasi-regular sequences with density $\mathbf{p}$ for $r<2$. So, $r=2$ is the least possible value for which existence of $r$-quasi-regular sequences with arbitrary densities is guaranteed. Using an ergodic theoretical approach, they also show ~\cite{kempe2018quasi}*{Corollary 5.4} that for any finite $\Sigma$ and any probability distribution $\mathbf{p}$ on $\Sigma$, there always exists a $3$-quasi-regular sequence with density $\mathbf{p}$, and for $2\leq r <3$ they ask whether $r$-quasi-regular sequences always exist.

This paper contains two main results. The first one is a positive answer to the aforementioned question for finite alphabets; Theorem ~\ref{thm:2-quasi-regular} shows that for any finite alphabet $\Sigma$ and any probability distribution $\mathbf{p}$ on $\Sigma$, there exists a $2$-quasi-regular sequence $s\in\Sigma^{\N}$ with density $\mathbf{p}$. The second result studies existence of $r$-quasi-regular sequences when the probabilities in the distribution tend to $0$; Theorem ~\ref{thm:eps-quasi-regular} shows that as $\norm{\mathbf{p}}_\infty$ tends to zero, the infimum of $r$ for which $r$-quasi-regular sequences with density $\mathbf{p}$ exist, tends to one.

\subsection*{Pinwheel Problem} Given a vector $v = (v_1,\ldots,v_n)$ of integers with $v_1, \ldots, v_n\geq 2$, a Pinwheel schedule for $v$ is a sequence $s\in\{1,\ldots,n\}^{\N}$ such that for any $k\in\{1,\ldots,n\}$, there is at least one character $k$ in any consecutive subsequence of length $v_k$ in $s$ ~\cite{lin1997pinwheel, fishburn2002pinwheel, holte1989pinwheel, chan1993schedulers}. The density of $v$, denoted by $d(v)$, is equal to $\sum_{k=1}^{n} \frac{1}{v_k}$. Connections between existence of Pinwheel schedules for vector $v$ and density of $v$ have been studied: it is proven ~\cite{fishburn2002pinwheel} that any vector $v$ with $d(v)\leq 7/10$ has a Pinwheel schedule, and, furthermore, it is conjectured ~\cite{chan1993schedulers} that any vector $v$ with $d(v)\leq 5/6$ has a Pinwheel schedule. It is obvious that for $d(v) > 1$, no Pinwheel schedule for $v$ exists, and it is known that for any $\eps>0$, there exists a vector $v$ with $5/6 < d(v) <5/6 + \eps$ and no Pinwheel schedules.

An easy corollary of Theorem ~\ref{thm:eps-quasi-regular}, shows that for each $\eps>0$, there exists $N\in\N$ such that any vector $v=(v_1,\ldots,v_n)$ with $v_1,\ldots,v_n\geq N$ and $d(v) < 1-\eps$ has a Pinwheel schedule. This means that as the smallest integer in the vector tends to infinity, the density threshold for Pinwheel schedulability tends to one.

\subsection*{\texorpdfstring{$r$}{r}-Quasi-Regular Sequences}
Let $\Sigma$ be a countable alphabet. Define 
\begin{align*}
  \text{MaxGap} & : \Sigma^\N \times \Sigma \to \N \cup \{ \infty \},\\
  \text{MinGap} & : \Sigma^\N \times \Sigma \to \N \cup \{\infty \}
\end{align*}
as follows:
\begin{align*}
  \text{MaxGap}(s, \sigma) & = \sup{\{n \in \N \ | \ \exists k \in \N \ \text{s.t.} \ s_i \neq \sigma \ \text{for} \ i\in (k, k+n)\}},\\
  \text{MinGap}(s, \sigma) & = \inf{\{n \in \N \ | \ \exists k \in \N \ \text{s.t.} \  s_k = s_{k+n} = \sigma\}}.
\end{align*}

So, for $s\in \Sigma^\N$ and $\sigma\in \Sigma$, $\text{MaxGap}(s, \sigma)$ is the supremum of distances between consecutive appearances of $\sigma$ in $s$, and $\text{MinGap}(s, \sigma)$ is the infimum of distances between consecutive appearances of $\sigma$ in $s$.

Finally, we define $\text{QR} : \Sigma^\N \times \Sigma \to \N \cup \{ \infty \}$ as follows:
\begin{align*}
  \text{QR}(s, \sigma) = \frac{\text{MaxGap}(s, \sigma)}{\text{MinGap}(s, \sigma)},
\end{align*}
with $\frac{\infty}{\infty} = 1$. We call $\text{QR}(s, \sigma)$ the {\em quasi-regularity} of $\sigma$ in $s$.

In words, for $s\in \Sigma^\N$ and $\sigma \in \Sigma$, $\text{QR}(s, \sigma)$ is the ratio of the largest to smallest distance between consecutive appearances of $\sigma$ in $s$.

Note that, if $\sigma$ appears in $s$ at most once, then $\text{QR}(s, \sigma) = 1$, and if $\sigma$ appears in $s$ at least twice and only finitely many times, or if the distances between consecutive appearances of $\sigma$ can be arbitrarily large, then $\text{QR}(s, \sigma) = \infty$.

\begin{definition}
\label{def:quasi-reg seq}
  Let $s \in \Sigma^\N$. The {\em quasi-regularity} of $s$ is defined as 
  $$\text{QR}(s) = \sup\{\,\text{QR}(s, \sigma) \,|\, \sigma\in\Sigma \}.$$
\end{definition}

In words, the quasi-regularity of a sequence $s\in \Sigma^\N$ is the largest quasi-regularity of $\sigma$ in $s$ for all characters $\sigma\in\Sigma$.

\subsection*{Main Results}
Let $\Sigma$ be a countable alphabet and $\mathbf{p}$ be a probability distribution on $\Sigma$. Define $r_\mathbf{p}$ to be the infimum of all values $\text{QR}(s)$ where $s\in\Sigma^\N$ and $d_s=\mathbf{p}$.

Our first result gives an upper bound for $r_\mathbf{p}$ when $\Sigma$ is finite; $r_\mathbf{p}\leq 2$. On the other hand, when $\mathbf{p}=(1/2,1/3,1/6)$, an easy corollary of ~\cite{kempe2018quasi}*{Proposition 4.6} shows that $r_\mathbf{p} \geq 2$. So, $2$ is the least universal upper bound for $r_\mathbf{p}$ when $\mathbf{p}$ is defined on a finite alphabet.

\begin{theorem}
\label{thm:2-quasi-regular}
  Let $\Sigma$ be a finite alphabet and $\mathbf{p}$ be a probability distribution on $\Sigma$. There exists a sequence $s\in\Sigma^\N$ with $d_s=\mathbf{p}$ and $\text{QR}(s)\leq 2$.
\end{theorem}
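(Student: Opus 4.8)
The plan is to build the sequence $s$ by a recursive/greedy construction that interleaves the characters of $\Sigma$ one probability-class at a time, keeping the gaps for each character within a factor of $2$. The natural starting point is Tijdeman's theorem: there is a sequence with density $\mathbf{p}$ and discrepancy at most $1$. A discrepancy-$1$ sequence already has, for each $\sigma$, all gaps between consecutive occurrences of $\sigma$ within an additive constant of $1/\mathbf{p}_\sigma$, so each gap lies in $\{\lfloor 1/\mathbf{p}_\sigma\rfloor, \lceil 1/\mathbf{p}_\sigma\rceil\}$ or a narrow band around it; this gives $\mathrm{QR}(s,\sigma)$ close to $1$ when $\mathbf{p}_\sigma$ is small but can be as bad as roughly $2$ only in borderline cases (e.g.\ gaps alternating between $1$ and $2$). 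So the real content is handling the ``large probability'' characters, where the additive discrepancy bound is too weak to give a multiplicative factor of $2$.

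I would organize the argument around the following steps. First, reduce to the case where all probabilities are of the form (integer multiple of) a common unit, or more usefully, handle the dyadic-type structure: if all $\mathbf{p}_\sigma$ are powers of $2$ we already have a $1$-quasi-regular sequence (stated in the introduction), so the idea is to approximate $\mathbf{p}$ from below by such a nice distribution and then ``fill in'' the remaining density. Concretely, I would split $\Sigma$ into the set $H$ of characters with $\mathbf{p}_\sigma \ge 1/3$ (there are at most two such) and the rest $L$. For the heavy characters one checks by hand that the worst case is essentially $\{1/2,1/3,1/6\}$-like and an explicit periodic or eventually-periodic pattern achieves $\mathrm{QR}\le 2$. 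For the light characters, one wants a block construction: partition $\N$ into consecutive blocks and within each block place each light character a prescribed number of times so that the per-character gap never exceeds twice the per-character minimum gap, using that $\mathbf{p}_\sigma$ small means we have room to spare.

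The cleanest route is probably a divide-and-conquer on the alphabet: recursively split $\Sigma = \Sigma_0 \sqcup \Sigma_1$ with $\mathbf p(\Sigma_0), \mathbf p(\Sigma_1)$ each close to $1/2$, build $2$-quasi-regular sequences $s_0, s_1$ with densities $\mathbf p|_{\Sigma_0}/\mathbf p(\Sigma_0)$ and $\mathbf p|_{\Sigma_1}/\mathbf p(\Sigma_1)$, and then interleave them according to a discrepancy-$1$ binary master sequence with density $(\mathbf p(\Sigma_0), \mathbf p(\Sigma_1))$. One must check that interleaving via a near-balanced binary sequence multiplies each character's max gap and min gap by factors that stay within a ratio of $2$; since the binary master sequence has its own gaps in a bounded ratio and the sub-sequences are $2$-quasi-regular, the composition of the two ratios could a priori reach $4$, so the key technical lemma is that one can choose the split and the master sequence so the two sources of distortion do not compound --- e.g.\ by making the master sequence exactly alternate (ratio $1$) whenever the split is exactly $1/2$, and controlling the error otherwise.

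I expect the main obstacle to be exactly this non-compounding of distortions: naively composing a $2$-quasi-regular interleaving pattern with $2$-quasi-regular sub-sequences gives only a $4$-quasi-regular sequence, so the construction must be cleverer than pure recursion. The likely fix is to track, for each character, not just the ratio but the actual interval $[\mathrm{MinGap}, \mathrm{MaxGap}]$ it occupies, and design the interleaving so that a character coming from a sub-sequence with gaps in $[a,2a]$ ends up with gaps in $[b, 2b]$ for a single scaled $b$ --- which forces the master sequence to be essentially rigid (periodic up to discrepancy $1$) rather than merely $2$-quasi-regular. Handling the rounding errors when $\mathbf p(\Sigma_0)$ is not exactly dyadic, and the base cases with one or two heavy characters, will be where the careful bookkeeping lives.
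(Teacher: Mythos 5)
You correctly identify the central obstacle---naive composition of a $2$-quasi-regular master sequence with $2$-quasi-regular sub-sequences gives a priori only a factor $4$---and your instinct that the right invariant is to keep each character's gaps inside a single interval $[b,2b]$ rather than merely a ratio $\le 2$ is exactly right. But the route you propose does not get there. The divide-and-conquer interleaving cannot be made to work with a ``rigid'' master sequence, because for the master sequence to be exactly periodic you need the split $\mathbf{p}(\Sigma_0)$ to be a dyadic rational, and a generic $\mathbf{p}$ admits no such split at any level of the recursion. As soon as $\mathbf{p}(\Sigma_0)$ is not dyadic, the discrepancy-$<1$ master sequence already has consecutive-$0$ gaps spanning a nontrivial ratio (e.g.\ density $1/3$ gives gaps $2$ and $3$), and this multiplies against the ratio coming from $s_0$ rather than being absorbed by it. The recursion also compounds over $\Theta(\log(1/\mathbf p_{\min}))$ levels, and nothing in the sketch controls the accumulation of the additive discrepancy error, which enters multiplicatively when the inner gaps are small. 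The ``fill in the remaining density'' idea is gestured at but never developed, and it is precisely there that the real argument lives.

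The paper's proof takes a genuinely different decomposition and it is worth understanding why. Rather than splitting the \emph{alphabet} and interleaving (space-averaging), it splits the \emph{density}: it writes $\mathbf{p}$ as a convex combination $\sum_i \alpha_i \mathbf{p}_i$ of \emph{pseudo-binary approximations}---distributions where all but at most one character has probability an exact power of $2$. For each such $\mathbf p_i$ one can build a genuinely rigid finite block (a string on $[2^{M}]$ where every $\sigma$ occupies an arithmetic progression of step $2^{\mathbf n_\sigma}$ or $2^{\mathbf n_\sigma-1}$), so each character's gaps are literally two consecutive powers of $2$. The target density is then achieved by \emph{time}-averaging: concatenate $\lfloor \alpha_1 n\rfloor$ copies of the $\mathbf p_1$-block, then $\lfloor \alpha_2 n\rfloor$ copies of the $\mathbf p_2$-block, and so on, for growing $n$. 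All of the technical work is in showing that these blocks can be glued to one another by short transition strings that keep every gap inside its fixed dyadic interval $[2^{\mathbf n_\sigma-1}, 2^{\mathbf n_\sigma}]$; this is Lemma~\ref{lem:n-p-good-strings} part~(2), proved through the apparatus of expandable/contractible tree vertices and swap permutations in Section~4. If you want to pursue your recursive interleaving, you would need an analogous gluing/rounding mechanism to keep the per-character intervals exactly $[b,2b]$ across levels, and I do not see how to get it without essentially reinventing the dyadic rigid-block structure.
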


Our second result is about the limiting behavior of $r_\mathbf{p}$ when the probability distribution $\mathbf{p}$ tends to 0 uniformly on $\Sigma$; $\lim_{\norm{\mathbf{p}}_\infty\to 0} r_\mathbf{p} = 1$.

\begin{theorem}
\label{thm:eps-quasi-regular}
  Let $\eps>0$. There exists $\delta>0$ such that for any probability distribution $\mathbf{p}$ on a countable alphabet $\Sigma$ with $\norm{\mathbf{p}}_\infty \coloneqq \sup_{\sigma\in\Sigma}\mathbf{p}(\sigma) <\delta$, there exists a sequence $s\in \Sigma^\N$ with $d_s=\mathbf{p}$ and $\text{QR}(s)\leq 1+\eps$.
\end{theorem}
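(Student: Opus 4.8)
The plan is to reduce the entire statement to a single oscillation estimate for the classical ``weighted round robin'' (chairman assignment) construction. Fix $\eps>0$ and pick $\eps_0>0$ with $\tfrac{1+\eps_0/2}{1-\eps_0/2}\le 1+\eps$; it suffices to build $s\in\Sigma^\N$ with $d_s=\mathbf p$ in which every gap between consecutive occurrences of each $\sigma$ lies in $\bigl[(1-\eps_0/2)/p_\sigma,\ (1+\eps_0/2)/p_\sigma\bigr]$, which forces $\text{QR}(s,\sigma)\le 1+\eps$ for all $\sigma$. The hypothesis $\norm{\mathbf p}_\infty<\delta$ will be used only through its soft consequences: there are more than $1/\delta$ characters, and each ideal period $1/p_\sigma$ exceeds $1/\delta$, so every admissible window is wide. (The construction makes sense verbatim for countable $\Sigma$; alternatively one reduces to finite $\Sigma$ by a routine approximation.)

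Scanning positions $t=1,2,\dots$, maintain for each $\sigma$ its lag $\Phi_\sigma(t)=t\,p_\sigma-\#\{i<t:\ s_i=\sigma\}$, and set $s_t\in\argmax_\sigma\Phi_\sigma(t)$ (the maximum is attained because $\sum_\sigma p_\sigma=1$; ties are broken by a fixed rule). This is Tijdeman's construction, so $\abs{\Phi_\sigma(t)}$ stays bounded and $d_s=\mathbf p$. Write $h(t):=\max_\sigma\Phi_\sigma(t)$. If $\sigma$ is placed at a time $\tau$ then $\Phi_\sigma(\tau)=h(\tau)$ and $\Phi_\sigma(\tau+k)=h(\tau)-1+k\,p_\sigma$ until $\sigma$ is placed again; hence, writing $\tau'$ for the next placement of $\sigma$, $\tau'-\tau=\bigl(1+h(\tau')-h(\tau)\bigr)/p_\sigma$. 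Consequently, once we know the oscillation $\operatorname{osc}(h):=\sup_t h(t)-\inf_t h(t)$ is at most $\eps_0/2$ (provided $\delta$ is small enough in terms of $\eps$), every gap of every $\sigma$ lies in $\bigl[(1-\eps_0/2)/p_\sigma,\ (1+\eps_0/2)/p_\sigma\bigr]$ and we are done. So the whole theorem becomes the single claim: \emph{if $\norm{\mathbf p}_\infty<\delta$ then $\operatorname{osc}(h)\le\eps_0/2$.}

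Establishing this oscillation bound is the heart of the argument and the only point at which the smallness hypothesis is genuinely used: Tijdeman's theorem controls $\abs{\Phi_\sigma}$ but not $\operatorname{osc}(h)$, and an oscillation of order $1$ yields nothing through the identity above. The mechanism is equilibration: each step lowers one lag by almost $1$ and raises all the others by tiny amounts, driving the empirical distribution of the lags $\{\Phi_\sigma(t)\}_\sigma$ toward an equilibrium in which they are spread roughly uniformly over an interval of length $\approx 1$ with $h(t)$ at its top. Since each character spends its last $\approx\eps_0/p_\sigma$ steps before a placement within $\eps_0$ of $h(t)$, the number of lags within $\eps_0$ of the top is of order $\sum_\sigma p_\sigma\cdot(\eps_0/p_\sigma)=\eps_0\abs{\Sigma}>\eps_0/\delta\gg1$; with that many lags crowded just below it, $h$ cannot move appreciably in any one step, and a Lyapunov or invariant argument — e.g.\ tracking the empirical lag distribution, or $\sum_\sigma(\Phi_\sigma(t)-h(t))^2$, and showing it stays $O(\delta)$-close to the equilibrium profile — should upgrade ``moves little in one step'' to the global bound $\operatorname{osc}(h)\le\eps_0/2$. (Sanity check: for $\mathbf p$ exactly uniform the procedure is a plain round robin with $\operatorname{osc}(h)=O(1/\abs{\Sigma})$.) I expect carrying out this equilibration estimate uniformly over all admissible $\mathbf p$ — in particular over countable alphabets with infinitely many vanishing masses — to be the main obstacle, with Tijdeman's bound as the natural a priori input.
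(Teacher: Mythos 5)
Your proposal reduces the whole theorem to a single unproved claim, namely that for the greedy (Tijdeman/``chairman assignment'') update the running maximum lag $h(t)=\max_\sigma\Phi_\sigma(t)$ has oscillation $\leq\eps_0/2$ once $\norm{\mathbf{p}}_\infty<\delta$. You say yourself that this is ``the main obstacle,'' and the heuristic you offer for it is circular: the count ``$\sum_\sigma p_\sigma\cdot(\eps_0/p_\sigma)=\eps_0|\Sigma|$ lags near the top'' presupposes that the empirical distribution of the lags is already spread roughly uniformly over a unit interval, which is exactly the equilibration you are trying to establish. Nothing in the argument rules out the lags clustering, nor is any Lyapunov functional actually produced; and for a genuinely countable $\Sigma$ with $\inf_\sigma p_\sigma=0$ the quantity $\eps_0|\Sigma|$ is not even the right thing to count.

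More importantly, your sanity check is incorrect, and that incorrectness is diagnostic. Take $\mathbf{p}$ uniform on $n$ letters and start the greedy from scratch: $\Phi_\sigma(1)=1/n$ for every $\sigma$, so $h(1)=1/n$; $n$ steps later the last remaining unplaced letter has lag $1$, so $h(n)=1$ and $\operatorname{osc}(h)\geq 1-1/n$, not $O(1/|\Sigma|)$. The gaps are still exactly $n$ in this example, but that is because $h$ returns to the value $1/n$ precisely at each placement time of any fixed $\sigma$ (the schedule is periodic), not because $\operatorname{osc}(h)$ is small. In other words, your sufficient condition ``$\operatorname{osc}(h)$ small'' fails in the very base case you cite as evidence, and the true mechanism — equality of $h$ at successive placements of the same letter — is a much more delicate, $\sigma$-by-$\sigma$ statement that the greedy dynamics give you no obvious handle on for general multi-scale $\mathbf{p}$. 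Equivalently: Tijdeman's discrepancy bound $|\Phi_\sigma|<1$ is what you actually have, and alone it gives gaps in $((1-2)/p_\sigma,(1+2)/p_\sigma)$, i.e.\ nothing better than the $\text{QR}\le 3$ already known. There is also an unaddressed initial-transient problem (at $t=1$ every lag is below $\delta$, so $h$ must climb by nearly $1$ before any steady regime can set in), which would require either constructing ``equilibrium'' initial conditions or showing the transient is harmless; neither is done.

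For comparison, the paper does not use a greedy dynamical system at all. It buckets $\Sigma$ by probability scale (ratio $\leq 2$ within each bucket), builds a near-regular schedule inside each large bucket via a randomized Hall's-marriage-theorem argument (Proposition~\ref{prop:big_1}), handles the union of small buckets separately (Proposition~\ref{prop:big_2}), builds a coarse schedule over the bucket alphabet using Tijdeman's discrepancy-$1$ result (Proposition~\ref{prop:big_3}), and glues these together with a composition lemma (Lemma~\ref{lemma:composition}) that controls a generalized $k$-quasi-regularity under substitution. The bucketing and composition are what let the error be driven to $0$ as $\norm{\mathbf{p}}_\infty\to0$; Tijdeman's bound is used only as a coarse ingredient, exactly because by itself it cannot give $\text{QR}\to 1$.
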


\vspace{0.5cm}
\subsection*{Acknowledgments}
We would like to thank Omer Tamuz for his many useful comments on earlier drafts of this paper.

\section{Main Techniques and Ideas}

\subsubsection*{Theorem ~\ref{thm:2-quasi-regular}}
Let $\Sigma$ be a finite alphabet and $\mathbf{p}$ be a probability distribution on $\Sigma$.

First, note that if $\mathbf{p}$ is binary (i.e.\ if for each $\sigma\in\Sigma$ we have $\mathbf{p}(\sigma) = \frac{1}{2^{k_\sigma}}$ for some $k_\sigma\in\N$), then we can find a reqular periodic sequence $s\in\Sigma^\N$ (i.e.\ a periodic sequence with quasi-regularity equal to 1) with $d_s = \mathbf{p}$. For simplicity, we call the repeated part of $s$ the building block of $s$, which is a finite sequence. So, $s$ is the concatenation of infinitely many copies of its building block.

Next, we observe a statement similar to the following: if $\mathbf{p}$ is almost binary (i.e.\ if for all but at most one $\sigma\in\Sigma$ we have $\mathbf{p}(\sigma) = \frac{1}{2^{k_\sigma}}$ for some $k_\sigma\in\N$), then we can find an almost regular periodic sequence $s\in\Sigma^\N$ (i.e.\ a periodic sequence $s\in\Sigma^\N$ with quasi-regularity equal to either 1 or 2, where for all but at most one $\sigma\in\Sigma$ the quasi-regularity of $\sigma$ in $s$ is equal to 1) with $d_s = \mathbf{p}$. This is the content of Lemma~\ref{lem:n-p-good-strings} part (1).

Almost binary probability distributions play an important role in the proof for two reasons: (I) they are simple enough so that the manipulation of their corresponding almost regular periodic sequences is possible, and (II) any probability distribution, restricted to its support, is a convex combination of almost binary probability distributions.

Now, given a probability distribution $\mathbf{p}$, without loss of generality we may assume $\Sigma$ is its support, and so by (II), we know that $\mathbf{p}$ is a convex combination of some almost binary probability distributions, say $\mathbf{p}_1,\ldots,\mathbf{p}_n$.
For $i=1,\ldots,n$, let $s_i$ be the building block of an almost regular periodic sequence for $\mathbf{p}_i$. By concatenating $m_i\in\N$ copies of $s_i$ and then glueing these chunks by some {\em gluing sequences}, we get a resulting finite sequence where (a) if $m_i$'s are chosen carefully, the density of the resulting sequence is close to $\mathbf{p}$, and (b) in Lemma~\ref{lem:n-p-good-strings} part (2) we show that the gluing sequences can be chosen such that the resulting sequences have quasi-regularity at most 2. So, we can get a finite sequence with quasi-regularity at most 2 and density arbitrarily close to $\mathbf{p}$. Carefully glueing an infinite number of these resulting sequences with densities tending to $\mathbf{p}$, gives us a sequence with quasi-regularity at most 2 and density equal to $\mathbf{p}$.

\subsubsection*{Theorem ~\ref{thm:eps-quasi-regular}}
Let $\Sigma$ be a finite alphabet and $\mathbf{p}$ be a probability distribution on $\Sigma$ with $\norm{\mathbf{p}}_\infty$ close to 0. We generalize the notaion of quasi-regularity so that for $k\in\N$ we can control how regular $k$-distant occurrences of each character are. Lemma~\ref{lemma:composition} allows us to control this notion under compositions, where, roughly speaking, a composition is replacing a certain character with another sequence.

Break $\Sigma$ into some buckets, where in each bucket the ratio of the largest probability to the smallest one is at most 2. For each bucket that has enough characters in it, similar to ~\cite{kempe2018quasi}*{Theorem 6.1}, we use Hall's Marriage theorem in Proposition~\ref{prop:big_1}, and show that there is a sequence with density proportional to the probabilities in the bucket with small quasi-regularity. Let $B_1, B_2,\ldots$ be these buckets, and $s_1,s_2,\ldots$ be the corresponding sequences. Define a new alphabet, where characters are buckets $B_1,B_2,\ldots$, and put a probablity distribution on it, where probablity of $B_i$ is proportional to $\sum_{\sigma\in B_i} \mathbf{p}_\sigma$. For this new alphabet and probability distribution, by Proposition~\ref{prop:big_3}, we have a fairly reqular sequence. Composing this sequence with the $s_i$'s, we get a sequence with density proportional to probabilities of characters in $\cup B_i$ and quasi-regularity close to 1. Let $s_B$ be this sequence.

Let $C$ be the union of the buckets with not enough characters in them, i.e. the ones we did not consider in the previous paragraph. The last step will be adding characters of $C$ to the sequence. For that, we use Hall's Marriage theorem in Proposition~\ref{prop:big_2}, and show that we can open some spaces between characters in $s_B$ and add characters from $C$ in them, so that, the new string has density equal to $\mathbf{p}$ and quasi-regularity close to 1.

\vspace{1cm}
\section{Proof of Theorem ~\ref{thm:2-quasi-regular}}

Since later in some of the definitions and proofs, we need a special character which is not in our alphabet, throughout this proof, we assume the alphabets do not include the character $\sim$. We reserve $\sim$ for this special character.

Here, we introduce some notations and definitions that will help us through the rest of the proof.

\begin{notation}
  For $n,m\in\Z$ let
  \begin{align*}
  	[n] & = \{1, \ldots, n\},\\
    [n, m] & = \{n, n+1, \ldots, m-1, m\},\\
    [n, m) & = \{n, n+1, \ldots, m-1\}.
  \end{align*}
  For $A,B\subseteq \Z$ and $k\in \Z$, let 
  \begin{align*}
    A + B & = \{a+b\mid a\in A, \ b\in B\},\\
    k + A & = \{k + a\mid a\in A\},\\
    k A & = \{k a\mid a\in A\}.
  \end{align*}
\end{notation}

\begin{definition}[Frames on alphabets]
  Let $\Sigma$ be a finite alphabet.
  \begin{enumerate}
  \item A {\em frame} on $\Sigma$ is an $\mathbf{n} \in \N^{\Sigma_\mathbf{n}}$ where $\emptyset \neq \Sigma_\mathbf{n} \subseteq \Sigma$. Define $M_\mathbf{n} \coloneqq \max_{\sigma\in\Sigma_\mathbf{n}}{\mathbf{n}_\sigma+5}$.
  
  \item For a frame $\mathbf{n}\in\N^{\Sigma_\mathbf{n}}$, define $\Pi_\mathbf{n}$ to be the collection of all probability distributions on $\Sigma$ with the following property; for $\sigma\in\Sigma_\mathbf{n}$ we have 
  $$\frac{1}{2^{\mathbf{n}_\sigma}} \leq \mathbf{p}_\sigma \leq \frac{1}{2^{\mathbf{n}_\sigma - 1}}.$$
  
  \item Let $\mathbf{n}\in\N^{\Sigma_\mathbf{n}}$ be a frame on $\Sigma$, and $\mathbf{p} \in \Pi_\mathbf{n}$. We say that $\mathbf{p}$ is a {\em pseudo-binary approximation for $\mathbf{n}$} if for at most one $\sigma\in\Sigma_\mathbf{n}$ we have 
  $\mathbf{p}_\sigma \notin \{\frac{1}{2^{\mathbf{n}_\sigma}}, \frac{1}{2^{\mathbf{n}_\sigma-1}}\}$.
Define $\irr(\mathbf{p}) \coloneqq \sigma$ if such $\sigma\in\Sigma_\mathbf{n}$ exists and $\irr(\mathbf{p}) \coloneqq \ \sim$ otherwise. We denote by $\PBA_\mathbf{n}$ the set of pseudo-binary approximations for the frame $\mathbf{n}$.
  \end{enumerate}
\end{definition}

\begin{claim}
\label{claim:convex-hull}
  If $\mathbf{p}$ is a probability distribution on a finite alphabet $\Sigma$ with $\supp(\mathbf{p}) = \Sigma$, then:
  \begin{enumerate}
    \item There exists $\mathbf{n}\in \N^\Sigma$ with $\mathbf{p}\in\Pi_\mathbf{n}$.
    \item $\Pi_\mathbf{n}$ is equal to the convex hull of $\PBA_\mathbf{n}$ for any frame $\mathbf{n}\in\N^\Sigma$.
    \item $\mathbf{p}$ is in the convex hull of $\PBA_\mathbf{n}$ for some $\mathbf{n}\in \N^\Sigma$.
  \end{enumerate}
\end{claim}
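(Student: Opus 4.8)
The plan is to dispatch (1) by an explicit choice of frame, deduce (3) formally from (1) and (2), and put the real effort into (2), which I would prove by a mass-transfer (``vertex-peeling'') induction on how far $\mathbf{p}$ is from being pseudo-binary.

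For (1): since $\supp(\mathbf{p}) = \Sigma$, every $\mathbf{p}_\sigma$ is positive, so I would simply set $\mathbf{n}_\sigma \coloneqq \ceil{\log_2(1/\mathbf{p}_\sigma)}$ (replacing this value by $1$ in the degenerate case $\mathbf{p}_\sigma = 1$, so that $\mathbf{n}_\sigma \in \N$ and the interval still contains $\mathbf{p}_\sigma$). This $\mathbf{n}_\sigma$ is an integer with $2^{-\mathbf{n}_\sigma} \le \mathbf{p}_\sigma \le 2^{-\mathbf{n}_\sigma+1}$, so $\mathbf{n} \in \N^\Sigma$ is a frame with $\Sigma_\mathbf{n} = \Sigma$ and $\mathbf{p} \in \Pi_\mathbf{n}$. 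For (3): take the $\mathbf{n}$ produced in (1) and apply (2) to get $\Pi_\mathbf{n} = \operatorname{conv}(\PBA_\mathbf{n})$, whence $\mathbf{p} \in \operatorname{conv}(\PBA_\mathbf{n})$; here $\operatorname{conv}(S)$ denotes the convex hull of $S$.

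It remains to prove (2) for an arbitrary frame $\mathbf{n} \in \N^\Sigma$ (so $\Sigma_\mathbf{n} = \Sigma$). One inclusion is immediate: $\PBA_\mathbf{n} \subseteq \Pi_\mathbf{n}$ by definition and $\Pi_\mathbf{n}$ is convex, being the intersection of the simplex of probability distributions on $\Sigma$ with the box $\prod_{\sigma\in\Sigma}\bigl[2^{-\mathbf{n}_\sigma}, 2^{-\mathbf{n}_\sigma+1}\bigr]$, so $\operatorname{conv}(\PBA_\mathbf{n}) \subseteq \Pi_\mathbf{n}$; if $\Pi_\mathbf{n} = \emptyset$ then $\PBA_\mathbf{n} = \emptyset$ as well and there is nothing to prove, so assume $\Pi_\mathbf{n} \neq \emptyset$. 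For the reverse inclusion, call a coordinate $\sigma$ \emph{free} in a point $\mathbf{q} \in \Pi_\mathbf{n}$ if $2^{-\mathbf{n}_\sigma} < \mathbf{q}_\sigma < 2^{-\mathbf{n}_\sigma+1}$; then $\PBA_\mathbf{n}$ is exactly the set of points of $\Pi_\mathbf{n}$ with at most one free coordinate. I would induct on the number of free coordinates of $\mathbf{p} \in \Pi_\mathbf{n}$. If $\mathbf{p}$ has at most one free coordinate it lies in $\PBA_\mathbf{n}$ and we are done. Otherwise pick two distinct free coordinates $\sigma_1, \sigma_2$ and consider the line $\mathbf{p} + t(e_{\sigma_1} - e_{\sigma_2})$, where $e_\sigma$ is the standard basis vector of $\R^\Sigma$; moving along it preserves total mass $1$ and changes only the coordinates $\sigma_1, \sigma_2$. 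Let $t^+ > 0$ be largest with $\mathbf{p}^+ \coloneqq \mathbf{p} + t^+(e_{\sigma_1} - e_{\sigma_2}) \in \Pi_\mathbf{n}$ and $t^- > 0$ largest with $\mathbf{p}^- \coloneqq \mathbf{p} - t^-(e_{\sigma_1} - e_{\sigma_2}) \in \Pi_\mathbf{n}$; these are positive and finite because $\sigma_1, \sigma_2$ start strictly inside their intervals and the box is bounded, and at the extreme parameter value at least one of $\sigma_1, \sigma_2$ reaches an endpoint of its interval. Hence $\mathbf{p}^+$ and $\mathbf{p}^-$ each have strictly fewer free coordinates than $\mathbf{p}$ (coordinates other than $\sigma_1, \sigma_2$ are untouched), so by the inductive hypothesis $\mathbf{p}^+, \mathbf{p}^- \in \operatorname{conv}(\PBA_\mathbf{n})$; and since $\mathbf{p} = \frac{t^-}{t^+ + t^-}\,\mathbf{p}^+ + \frac{t^+}{t^+ + t^-}\,\mathbf{p}^-$, also $\mathbf{p} \in \operatorname{conv}(\PBA_\mathbf{n})$.

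I expect the only delicate point to be the bookkeeping in the inductive step of (2): one must check that $\mathbf{p}^\pm$ both stay in $\Pi_\mathbf{n}$ and genuinely lose a free coordinate, which is precisely the reason for perturbing only along $e_{\sigma_1} - e_{\sigma_2}$ (this freezes all non-free coordinates). Everything else is routine. An essentially equivalent alternative for (2) is to note that $\Pi_\mathbf{n}$ is a bounded polyhedron whose vertices are exactly the points at which $|\Sigma| - 1$ of the box constraints are tight, i.e.\ exactly the elements of $\PBA_\mathbf{n}$, and then invoke the Minkowski--Weyl theorem; the induction above is just a self-contained proof of that fact and avoids citing polytope theory.
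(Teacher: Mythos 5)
Your parts (1) and (3) match the paper's own treatment: pick $\mathbf{n}_\sigma$ from the dyadic interval around $\mathbf{p}_\sigma$, then deduce (3) from (1) and (2). For part (2), however, the paper simply cites \cite{kempe2018quasi}*{Lemma 4.3}, whereas you give a self-contained argument by mass-transfer induction on the number of free coordinates. Your argument is correct: perturbing along $e_{\sigma_1}-e_{\sigma_2}$ preserves total mass and touches no other coordinate; since $\sigma_1,\sigma_2$ begin strictly interior to their intervals and the box is bounded, $t^+$ and $t^-$ are positive and finite; at each extremum at least one of $\sigma_1,\sigma_2$ reaches an endpoint, so $\mathbf{p}^\pm$ each have strictly fewer free coordinates and the induction closes, with $\mathbf{p}$ on the segment $[\mathbf{p}^-,\mathbf{p}^+]$. (And you correctly note that since $2^{-\mathbf{n}_\sigma}>0$, the box constraints already enforce nonnegativity, so membership in $\Pi_\mathbf{n}$ reduces to the box alone.) What your route buys is self-containment and an explicit description of $\PBA_\mathbf{n}$ as the points with at most one free coordinate, which is conceptually clarifying; what the paper's route buys is brevity and reuse, since the same Lemma 4.3 of Kempe et al.\ is invoked again in the proof of Proposition~\ref{prop:big_1}, so keeping it as a cited black box avoids duplicating the argument. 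Either way the claim is established.
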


\begin{proof}
  Let $\mathbf{p}$ be a probability distribution on the finite alphabet $\Sigma$ with $\supp(\mathbf{p})=\Sigma$.
  \begin{enumerate}
    \item Note that for each $\sigma \in \Sigma$, there exists an $\mathbf{n}_\sigma\in \N$ with
      $\frac{1}{2^{\mathbf{n}_\sigma}} \leq \mathbf{p}_\sigma \leq \frac{1}{2^{\mathbf{n}_\sigma-1}}$.
      his gives us an $\mathbf{n}\in \N^\Sigma$ with $\mathbf{p}\in\Pi_\mathbf{n}$.
    \item This follows from ~\cite{kempe2018quasi}*{Lemma 4.3}.
    \item This follows from (1) and (2).
  \end{enumerate}
\end{proof}

Now, we generalize some of our previous definitions regarding sequences.

\begin{definition}[Strings]
\label{def:strings}
  Let $\Sigma$ be a finite alphabet.
  \begin{enumerate}
  \item Let $I\subseteq \N$. A {\em string on $I$ with characters from $\Sigma$} is an element of $\Sigma^I$. For $s\in \Sigma^I$, we call $I$ the {\em domain} of $s$ and $\Sigma_s \coloneqq \{s(i)\ | \ i\in I\}$ the set of characters that {\em appear} in $s$.
  
  \item Let $I\subseteq \N$. We define $\text{MaxGap}, \text{MinGap} :\Sigma^I\times\Sigma\to \N\cup\{\infty\}$ as follows:
  \begin{align*}
  \text{MaxGap}(s, \sigma) = \sup\{n \in \N \ | & \ \exists k \in \N \ \text{s.t.} \ k,k+n\in I \\
  & \text{and} \ s_i \neq \sigma \ \text{for} \ i\in (k, k+n)\cap I\}\\
  \text{MinGap}(\alpha, s) = \inf \{n \in \N \ | & \ \exists k \in \N \ \text{s.t.} \ k, k+n \in I \\
  & \text{and} \ s_k = s_{k+n} = \sigma\}
  \end{align*}
  
  \item Let $I\subseteq\N$. We say that $s\in\Sigma^I$ is {\em well-distributed} if for each $\sigma\in\Sigma$ the following limit
  $$\lim_{n \rightarrow \infty}{\frac{\abs{s^{-1}(\sigma) \cap [n]}}{\abs{I \cap [n]}}}$$
  exists. In this case, denote this limit by $d_s(\sigma)$, and call $d_s: \Sigma\to\R$ the density function of $s$. Note that if $s$ is well-distributed, $d_s$ is a probability distribution on $\Sigma$.
  \end{enumerate}
\end{definition}

\begin{definition}[String operations]
Let $\Sigma$ be a finite alphabet and $\mathbf{n}\in \N^{\Sigma_\mathbf{n}}$ be a frame on $\Sigma$.

  \begin{enumerate}
  \item If $I\subseteq \N$ and $s\in\Sigma^I$, for $m\in\Z$ we denote the {\em translate} of $s$ defined on $m+I$ by $\arr{s}{m}$. More precisely, $\arr{s}{m}\in\Sigma^{m+I}$ is defined by $\arr{s}{m}(m+i) = s(i)$ for $i\in I$.
      
  \item If $I,J\subseteq\N$ with $I\cap J=\emptyset$ and $s\in\Sigma^I, t\in\Sigma^J$, we define $s\vee t\in \Sigma^{I\cup J}$ by $(s\vee t)(i)$ equal to $s(i)$ if $i\in I$ and equal to $t(i)$ if $i\in J$.
  
  \item Let $I$ be a finite subset of $\N$. Define $\max_\mathbf{n} I \coloneqq \ceil{\frac{\max I}{2^{M_\mathbf{n}}}}$
  
  \item If $I,J\subseteq\N$ are finite subsets of $\N$ and $s\in\Sigma^I, t\in\Sigma^J$, we define the {\em $\mathbf{n}$-concatenation} of $s$ and $t$ by $s\wedge_\mathbf{n} t \coloneqq s\vee \arr{t}{N}$, where $N=\max_\mathbf{n}I \cdot 2^{M_\mathbf{n}}$.\\
  Define $s^{\wedge_\mathbf{n} k}\coloneqq s\wedge_\mathbf{n} \cdots\wedge_\mathbf{n} s$, where $s$ appears $k$ times.
  
  \item Let $I\subseteq\N$ be finite. For $k\in\N$, define
  $$I[\mathbf{n}, k] \coloneqq \big(-(k-1) 2^{M_\mathbf{n}} + I\big) \cap [2^{M_\mathbf{n}}],$$
  and let $I[\mathbf{n}, -1] \coloneqq I[\mathbf{n}, \max_\mathbf{n}I]$.

  \item Let $I\subseteq\N$ be finite and $s\in\Sigma^I$. For $k\in\N$, let $s[\mathbf{n}, k]\in\Sigma^{I[\mathbf{n}, k]}$ be the restriction of $\arr{s}{-(k-1)2^{M_\mathbf{n}}}$ to $I[\mathbf{n},k]$, and let $s[\mathbf{n}, -1] = s[\mathbf{n}, \max_\mathbf{n}I]$.
  
  \item Let $I,I'\subset\N$ be finite and $s\in\Sigma^I, s'\in\Sigma^{I'}$. If $s[\mathbf{n},-1] = s'[\mathbf{n},+1]$, then we define $s\diamond_\mathbf{n} s' \coloneqq u\wedge_\mathbf{n}s'$, where $u$ is the restriction of $s$ to $[(\max_\mathbf{n}I-1) 2^{M_\mathbf{n}}]$.
  \end{enumerate}
\end{definition}

\begin{definition}[Compatible strings]
\label{def:compatible-strings}
  Let $\Sigma$ be a finite alphabet, $\mathbf{n}\in \N^{\Sigma_\mathbf{n}}$ be a frame on $\Sigma$, and $s \in \Sigma^I$ with $I\subseteq \N$.
  \begin{enumerate}
    \item We say that $s$ is {\em $\mathbf{n}$-compatible}, if for all $\sigma\in\Sigma_\mathbf{n}$ we have:
    $$2^{\mathbf{n}_\sigma-1}\leq \text{MinGap}(s, \sigma)\leq \text{MaxGap}(s, \sigma)\leq 2^{\mathbf{n}_\sigma}.$$
    
    \item We say that $s$ is {\em locally $\mathbf{n}$-compatible}, if for any $\sigma\in \Sigma_\mathbf{n} \cap \Sigma_s$ we have:
    $$2^{\mathbf{n}_\sigma-1}\leq \text{MinGap}(s, \sigma)\leq \text{MaxGap}(s, \sigma)\leq 2^{\mathbf{n}_\sigma}.$$
  \end{enumerate}
\end{definition}

\begin{definition}[String connections]
\label{def:string-graph}
  Let $\Sigma$ be a finite alphabet and $\mathbf{n}\in \N^{\Sigma_\mathbf{n}}$ be a frame on $\Sigma$.
  Let $I, I'\subseteq [2^{M_\mathbf{n}}]$, $J\subseteq \N$ be finite, $s\in\Sigma^I, s'\in\Sigma^{I'}, t\in\Sigma^J$. We write $s\to_{\mathbf{n}, t} s'$ if 
    \begin{itemize}
      \item $\Sigma_t = \Sigma_s = \Sigma_{s'}$,
      \item $t$ is locally $\mathbf{n}$-compatible,
      \item $t[\mathbf{n}, +1] = s$, and
      \item $t[\mathbf{n}, -1] = s'$.
	\end{itemize}
\end{definition}

The proof of the following lemma is straightforward and follows easily from the definitions.

\begin{lemma}
\label{lem:string-graph}
  Let $\Sigma$ be a finite alphabet and $\mathbf{n}\in \N^{\Sigma_\mathbf{n}}$ be a frame on $\Sigma$.
  \begin{enumerate}
    \item Let $I,I',I''\subseteq [2^{M_\mathbf{n}}]$, $J,J'\subseteq \N$ be finite, and $s\in\Sigma^I, s'\in\Sigma^{I'}, s''\in\Sigma^{I''}, t\in\Sigma^J, t'\in\Sigma^{J'}$. If $s\to_{\mathbf{n},t} s'$ and $s'\to_{\mathbf{n},t'}s''$, then $s\to_{\mathbf{n},t\diamond_\mathbf{n} t'}s''$.
    
    \item Let $m\in\N$ and for $1\leq i\leq m$ let $I_i,I_i'\subseteq [2^{M_\mathbf{n}}]$, $J_i\subseteq\N$ be finite, $s_i\in\Sigma^{I_i}, s_i'\in\Sigma^{I_i'}, t_i\in\Sigma^{J_i}$, and $s_i\to_{\mathbf{n},t_i}s_i'$. If
    \begin{itemize}
      \item $\max_\mathbf{n}J_1 = \max_\mathbf{n}J_2 = \cdots = \max_\mathbf{n}J_m$,
      \item $J_1,J_2,\ldots,J_m$ are disjoint, and
      \item $\Sigma_{s_1}\cap\Sigma_\mathbf{n}, \Sigma_{s_2}\cap\Sigma_\mathbf{n}, \ldots, \Sigma_{s_m}\cap\Sigma_\mathbf{n}$ are disjoint,
    \end{itemize}
    then $(s_1\vee\cdots\vee s_m)\to_{\mathbf{n},(t_1\vee\cdots\vee t_m)}(s_1'\vee\cdots\vee s_m')$.
    
    \item Let $J,J'\subseteq\N$ be finite, $t\in\Sigma^J, t'\in\Sigma^{J'}$, and $t[\mathbf{n},-1]=t'[\mathbf{n},+1]$. If $t$ and $t'$ are $\mathbf{n}$-compatible, then $t\diamond_\mathbf{n} t'$ is also $\mathbf{n}$-compatible.
  \end{enumerate}
\end{lemma}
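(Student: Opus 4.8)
The plan is to prove all three parts by unwinding the definitions of $\diamond_\mathbf{n}$, $\vee$, $[\mathbf{n},+1]$, $[\mathbf{n},-1]$ and (local) $\mathbf{n}$-compatibility, and then checking the four bullet conditions of Definition~\ref{def:string-graph} one by one. The fact I would isolate first, and reuse throughout, is a block-synchronization observation: if $t$ is locally $\mathbf{n}$-compatible and $\sigma\in\Sigma_t\cap\Sigma_\mathbf{n}$, then $\text{MaxGap}(t,\sigma)\le 2^{\mathbf{n}_\sigma}\le 2^{M_\mathbf{n}-5}<2^{M_\mathbf{n}}$, so consecutive occurrences of $\sigma$ in $t$ are fewer than $2^{M_\mathbf{n}}$ apart; hence $\sigma$ occurs inside every length-$2^{M_\mathbf{n}}$ block that lies between the first and last block meeting $\domain(t)$. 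This is what makes the truncation and concatenation operations align at block boundaries: when forming $t\diamond_\mathbf{n}t'$, for example, one obtains $\max_\mathbf{n}\domain(u)=\max_\mathbf{n}\domain(t)-1$ for the truncation $u$ of $t$, so the first block of $t'$ (a copy of $s'$) lands exactly on the block deleted from $t$ (also a copy of $s'$).

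For part~(1): first note that $t[\mathbf{n},-1]=s'=t'[\mathbf{n},+1]$ makes $t\diamond_\mathbf{n}t'$ well-defined, and that unwinding $\diamond_\mathbf{n}$ shows $t\diamond_\mathbf{n}t'$ agrees block-by-block with $t$ on its first $\max_\mathbf{n}\domain(t)$ blocks and with a translate of $t'$ from that block onward, the shared block being one copy of $s'$. The support condition is then $\Sigma_{t\diamond_\mathbf{n}t'}=\Sigma_t\cup\Sigma_{t'}=\Sigma_s$, from $\Sigma_t=\Sigma_s=\Sigma_{s'}=\Sigma_{t'}=\Sigma_{s''}$; the first- and last-block conditions are $(t\diamond_\mathbf{n}t')[\mathbf{n},+1]=t[\mathbf{n},+1]=s$ and $(t\diamond_\mathbf{n}t')[\mathbf{n},-1]=t'[\mathbf{n},-1]=s''$; and for local $\mathbf{n}$-compatibility, fix $\sigma\in\Sigma_{t\diamond_\mathbf{n}t'}\cap\Sigma_\mathbf{n}$, so $\sigma\in\Sigma_{s'}=\Sigma_t=\Sigma_{t'}$ and $\sigma$ occurs inside the shared block. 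A window witnessing $\text{MaxGap}(t\diamond_\mathbf{n}t',\sigma)$ contains no occurrence of $\sigma$, and since the shared block does contain one, such a window lies inside the initial part (agreeing with $t$) or the final part (agreeing with a translate of $t'$), hence has length $\le 2^{\mathbf{n}_\sigma}$; a pair witnessing $\text{MinGap}$ lies inside one of these two parts too, giving a gap $\ge 2^{\mathbf{n}_\sigma-1}$, unless it straddles the shared block, in which case the gap exceeds $2^{M_\mathbf{n}}\ge 2^{\mathbf{n}_\sigma-1}$.

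For part~(2): since $\vee$ acts coordinatewise on disjoint domains, it commutes with restriction to $[2^{M_\mathbf{n}}]$ and with last-block extraction, and the hypothesis $\max_\mathbf{n}J_1=\cdots=\max_\mathbf{n}J_m$ is exactly what yields $\max_\mathbf{n}(J_1\cup\cdots\cup J_m)=\max_\mathbf{n}J_1$ and hence $(t_1\vee\cdots\vee t_m)[\mathbf{n},-1]=s_1'\vee\cdots\vee s_m'$ (without it, only the $t_i$ of maximal $\max_\mathbf{n}J_i$ would contribute to the last block). Together with $(t_1\vee\cdots\vee t_m)[\mathbf{n},+1]=s_1\vee\cdots\vee s_m$ and $\Sigma_{t_1\vee\cdots\vee t_m}=\bigcup_i\Sigma_{t_i}=\bigcup_i\Sigma_{s_i}=\Sigma_{s_1\vee\cdots\vee s_m}$ (and the same with each $s_i'$), this settles the first three conditions. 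For local $\mathbf{n}$-compatibility, disjointness of the sets $\Sigma_{s_i}\cap\Sigma_\mathbf{n}$ (which equal $\Sigma_{t_i}\cap\Sigma_\mathbf{n}$ since $\Sigma_{s_i}=\Sigma_{t_i}$) forces any $\sigma\in\Sigma_{t_1\vee\cdots\vee t_m}\cap\Sigma_\mathbf{n}$ to appear in only one $t_i$, so the occurrences of $\sigma$ in $t_1\vee\cdots\vee t_m$ are precisely those in $t_i$; since every character of $t_i$ already appears in both its first and its last block, these occurrences still reach the first and last block of the combined domain, so $t_1\vee\cdots\vee t_m$ has no $\sigma$-free window longer than $\text{MaxGap}(t_i,\sigma)\le 2^{\mathbf{n}_\sigma}$ and no $\sigma$-pair closer than $\text{MinGap}(t_i,\sigma)\ge 2^{\mathbf{n}_\sigma-1}$.

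For part~(3): full $\mathbf{n}$-compatibility of $t$ and $t'$ already forces $\Sigma_\mathbf{n}\subseteq\Sigma_t\cap\Sigma_{t'}$ (a character absent from a string has $\text{MinGap}=\infty$), hence $\Sigma_\mathbf{n}\subseteq\Sigma_{t\diamond_\mathbf{n}t'}$, and the computation is the local-compatibility argument of part~(1) carried out for every $\sigma\in\Sigma_\mathbf{n}$, now with the two-sided bounds $2^{\mathbf{n}_\sigma-1}\le\text{MinGap}\le\text{MaxGap}\le 2^{\mathbf{n}_\sigma}$ available for both $t$ and $t'$. The step I expect to demand the most care is the boundary bookkeeping behind the block-synchronization fact, namely confirming that in $\diamond_\mathbf{n}$ and in $\vee$ no block is duplicated or skipped and that an occurrence of each relevant $\sigma$ sits on both sides of every seam; once that is in place, all the inequalities above drop out directly.
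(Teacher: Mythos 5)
Your handling of parts (1) and (3) is sound and matches the spirit of the paper's "straightforward from the definitions" declaration. In particular, for part (1) the three-way case analysis (window entirely to the left of the seam, entirely to the right, or straddling the shared block which must carry a $\sigma$) is the correct bookkeeping, and your preliminary observation that some $\sigma\in\Sigma_t\cap\Sigma_\mathbf{n}$ forces the truncation of $t$ to reach the penultimate block (hence $\max_\mathbf{n}\domain(u)=\max_\mathbf{n}J-1$, so the shift amount is the intended one) is exactly the point that needs to be isolated.

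Part (2), however, has a genuine gap at the local-compatibility step. The assertion that $t_1\vee\cdots\vee t_m$ has no $\sigma$-free window longer than $\text{MaxGap}(t_i,\sigma)$ does not follow from "the occurrences of $\sigma$ reach the first and last block of the combined domain." Fix the unique $i$ with $\sigma\in\Sigma_{t_i}$. The combined domain is $J=J_1\cup\cdots\cup J_m\supsetneq J_i$, and the hypothesis $\max_\mathbf{n}J_1=\cdots=\max_\mathbf{n}J_m$ only equalizes the \emph{block index} of the largest element, not its position within that block; thus $\max J-\max J_i$ can be as large as $2^{M_\mathbf{n}}-1$ (and similarly $\min J$ can be much smaller than $\min J_i$). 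The pair consisting of the last $\sigma$-occurrence in $J_i$ and $\max J$ then witnesses a $\sigma$-free window in $t_1\vee\cdots\vee t_m$ of length up to $\text{MaxGap}(t_i,\sigma)+(\max J-\max J_i)$, which is not controlled by $2^{\mathbf{n}_\sigma}$. Concretely: take $\Sigma=\{a,b\}$, $\mathbf{n}_a=\mathbf{n}_b=1$ (so $M_\mathbf{n}=6$), $J_1=\{1,3,\dots,65\}$ with $t_1\equiv a$, and $J_2=\{2,4,\dots,128\}$ with $t_2\equiv b$. Each $t_i$ is locally $\mathbf{n}$-compatible with $\text{MinGap}=\text{MaxGap}=2$, and all three bullet hypotheses hold, yet $65,128\in J_1\cup J_2$ with no $a$ strictly between, so $\text{MaxGap}(t_1\vee t_2,a)\geq 63>2=2^{\mathbf{n}_a}$, and the stated conclusion fails. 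So the lemma as literally stated is false; every place the paper invokes part (2) supplies extra alignment (the $J_i$ together with a filler partition a full interval $[(k+3)2^{M_\mathbf{n}}]$, or are interleaved arithmetic-like progressions of the same span), which is what excludes this boundary pathology. To close the gap you need to record such an alignment hypothesis (or restrict to the setting in which the lemma is applied) and then actually carry out the boundary-gap estimate rather than assert it.
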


\begin{definition}[Saturated arithmetic progressions]
  Let $\Sigma$ be a finite alphabet.
\begin{itemize}
  \item For $N\in\N$, we say an arithmetic progression $J\subseteq [2^N]$ is {\em $N$-saturated} if the common difference of $J$, which we denote by $\diff_J$, is a power of 2, and $J$ is maximal among all arithmetic progressions with common difference $\diff_J$ in $[2^N]$.
  \item Let $\mathbf{n}\in \N^{\Sigma_\mathbf{n}}$ be a frame on $\Sigma$. An arithmetic progression $J\subseteq [2^{M_\mathbf{n}}]$ is called {\em $\mathbf{n}$-saturated} iff it is $M_\mathbf{n}$-saturated.
\end{itemize}
\end{definition}

\begin{definition}[Uniform strings]
\label{def:uniform-strings}
  Let $\Sigma$ be a finite alphabet, $\mathbf{n}\in \N^{\Sigma_\mathbf{n}}$ be a frame on $\Sigma$, and $\mathbf{p}$ be a probability distribution on $\Sigma$. We say that string $s\in \Sigma^{[2^{M_\mathbf{n}}]}$ is {\em $(\mathbf{n},\mathbf{p})$-uniform} if:
  \begin{itemize}
    \item $d_s=\mathbf{p}$, and
    \item  For any $\sigma\in\Sigma_\mathbf{n}$, there are $\mathbf{n}$-saturated arithmetic progressions $J^1_\sigma$ and $J^2_\sigma$ in $[2^{M_\mathbf{n}}]$ with common differences $2^{\mathbf{n}_\sigma}$ and $2^{\mathbf{n}_\sigma-1}$, such that $J^1_\sigma \subseteq s^{-1}(\sigma)\subseteq J^2_\sigma$.
  \end{itemize}
\end{definition}

\begin{remark}
\label{rmk:n-repeatable}
  Let $\Sigma$ be a finite alphabet, $\mathbf{n}\in \N^{\Sigma_\mathbf{n}}$ be a frame on $\Sigma$, and $p\in \PBA_\mathbf{n}$. Note that if $s$ is an $(\mathbf{n},\mathbf{p})$-uniform string, then $s^{\wedge_\mathbf{n} k}$ is $\mathbf{n}$-compatible for any $k\in\N\cup\{\infty\}$. In particular, $s$ is $\mathbf{n}$-compatible.
\end{remark}

\begin{lemma}
\label{lem:n-p-good-strings}
  Let $\Sigma$ be a finite alphabet, $\mathbf{n}\in \N^\Sigma$ be a frame on $\Sigma$, and $\mathbf{p}, \mathbf{p}' \in \PBA_{\mathbf{n}}$. Then
  \begin{enumerate}
    \item There exists an $(\mathbf{n}, \mathbf{p})$-uniform string $s\in \Sigma^{[2^{M_\mathbf{n}}]}$.
    \item Further, for any $(\mathbf{n}, \mathbf{p})$-uniform string $s\in\Sigma^{[2^{M_\mathbf{n}}]}$, there exists an $(\mathbf{n}, \mathbf{p}')$-uniform string $s'\in \Sigma^{[2^{M_\mathbf{n}}]}$ and $t\in\Sigma^{[k2^{M_\mathbf{n}}]}$ with $k\in\N$ such that $s\to_{\mathbf{n}, t} s'$.\\
    Denote the lexicographically least such $s'$ by $(\mathbf{n}, \mathbf{p}')[s]$, and for such $s'$ the lexicographically least such $t$ by $\mathbf{n}[s\leftrightarrow s']$.
  \end{enumerate} 
\end{lemma}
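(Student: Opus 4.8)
\emph{Proof proposal.} The plan is to prove (1) by an explicit greedy construction and (2) by assembling the bridge $t$ out of finitely many short pieces, glued with $\diamond_\mathbf{n}$ (legitimate by Lemma~\ref{lem:string-graph}(1)). Throughout I picture a block $[2^{M_\mathbf{n}}]$ as the set of leaves of a complete binary tree of depth $M_\mathbf{n}$: a node at depth $d$ is a maximal residue class modulo $2^d$, i.e.\ an $\mathbf{n}$-saturated arithmetic progression of common difference $2^d$, and its two children are its two sub-progressions of common difference $2^{d+1}$. The extra $5$ in $M_\mathbf{n}=\max_\sigma\mathbf{n}_\sigma+5$ means that a block is at least $2^5$ periods of even the least frequent character, which I will use as ``scratch space''.

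For (1), let $\tau=\irr(\mathbf{p})$. Every $\sigma\neq\tau$ has $\mathbf{p}_\sigma\in\{2^{-\mathbf{n}_\sigma},2^{-\mathbf{n}_\sigma+1}\}$; assign to it a depth-$\mathbf{n}_\sigma$ node if $\mathbf{p}_\sigma=2^{-\mathbf{n}_\sigma}$ and a depth-$(\mathbf{n}_\sigma-1)$ node if $\mathbf{p}_\sigma=2^{-\mathbf{n}_\sigma+1}$, and declare $s^{-1}(\sigma)$ to be that node, so that $J^1_\sigma$ and $J^2_\sigma$ can be read off as the node together with its parent or with one of its children. For $\tau$, reserve a depth-$(\mathbf{n}_\tau-1)$ node $J^2_\tau$ with children $J^1_\tau,J'$, put all of $J^1_\tau$ into $s^{-1}(\tau)$ together with a further $\mathbf{p}_\tau\cdot 2^{M_\mathbf{n}}-2^{M_\mathbf{n}-\mathbf{n}_\tau}$ positions of $J'$; this count is nonnegative and at most $|J'|$ because $2^{-\mathbf{n}_\tau}\le\mathbf{p}_\tau\le 2^{-\mathbf{n}_\tau+1}$. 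The remaining dyadic blocks can be packed disjointly so as to cover exactly the complement by a routine greedy (largest-first) argument, whose success is Kraft's inequality applied to $\sum_\sigma\mathbf{p}_\sigma=1$; since $2^{M_\mathbf{n}}\mathbf{p}_\sigma\in\N$ for every $\sigma$, the resulting string has $d_s=\mathbf{p}$, and by construction it is $(\mathbf{n},\mathbf{p})$-uniform.

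For (2), I would build $t$ as $t_{\mathrm{can}}\diamond_\mathbf{n} t_{\mathrm{change}}$. The first block of $t$ must be exactly the given $s$, so $t_{\mathrm{can}}$ is a bridge that migrates the tree layout underlying $s$ to a canonical $(\mathbf{n},\mathbf{p})$-uniform layout $\hat s_\mathbf{p}$ depending only on $\mathbf{p}$, realized as a sequence of local moves performed across one or a few block boundaries: cyclically rotating the residue class occupied by one character, or exchanging two sibling sub-blocks. Rotating a depth-$d$ residue class by one step at a boundary changes that character's boundary gap from $2^d$ to $2^d-1$ or $2^d+1$ (in the direction legal at that level), which one checks stays in $[2^{\mathbf{n}_\sigma-1},2^{\mathbf{n}_\sigma}]$; when a move would temporarily push a character out of its window, the $2^5$ factor in $M_\mathbf{n}$ provides a spare period in which to park characters during the rearrangement, and repeating an already-canonical block is harmless by Remark~\ref{rmk:n-repeatable}. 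Then, since $\mathbf{p},\mathbf{p}'\in\PBA_\mathbf{n}$ differ in boundedly many coordinates each by a bounded amount, $\hat s_\mathbf{p}$ is connected to the canonical $(\mathbf{n},\mathbf{p}')$-uniform string $\hat s_{\mathbf{p}'}$ by a second bounded bridge $t_{\mathrm{change}}$ built from the same local moves together with level changes (a character's node moving between depths $\mathbf{n}_\sigma-1$ and $\mathbf{n}_\sigma$, compensated by a partner character). Concatenating gives $t\in\Sigma^{[k2^{M_\mathbf{n}}]}$ with $s\to_{\mathbf{n},t}\hat s_{\mathbf{p}'}$; take $s'=\hat s_{\mathbf{p}'}$, and the lexicographically least $s'$ and $t$ then exist by well-ordering.

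The main obstacle is exactly this second part: one must reconfigure the positions of the characters involved in each local move over a bounded number of blocks while keeping every gap of every character inside the tight window $[2^{\mathbf{n}_\sigma-1},2^{\mathbf{n}_\sigma}]$ — a factor of exactly $2$, so there is no asymptotic slack, only the additive room coming from the $2^5$ factor — while keeping the positions a partition of each block (no collisions, no holes) and while forcing the first and last blocks of the bridge to come out to be precisely $s$ and the intended uniform string. Managing this boundary bookkeeping is where the genuine casework lies; once the elementary bridges are in hand, the rest is assembly via Lemma~\ref{lem:string-graph}.
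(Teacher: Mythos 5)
Your high‑level plan — prove (1) by placing each character on a dyadic block, and prove (2) by assembling the bridge $t$ from a bounded number of ``local moves'' glued with $\diamond_\mathbf{n}$ via Lemma~\ref{lem:string-graph}(1) — is the same strategy the paper follows. For (1) the paper builds the tree via a modified Huffman coding (writing $\mathbf{p}_{\irr(\mathbf{p})}$ as a sum of distinct powers of $2$ and treating each summand as a temporary ``starred'' character), which guarantees that $s^{-1}(\irr(\mathbf{p}))$ is a disjoint union of saturated arithmetic progressions. Your construction as stated puts $\irr(\mathbf{p})$ on ``a further $\mathbf{p}_\tau 2^{M_\mathbf{n}}-2^{M_\mathbf{n}-\mathbf{n}_\tau}$ positions of $J'$'' without saying those positions must themselves be a union of dyadic sub-blocks; if they are not, the complement of $s^{-1}(\tau)$ is not a union of saturated APs and your greedy packing step no longer makes sense. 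This is easily repaired, but you should say it.

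For (2) there is a genuine gap, and it is exactly the one you flag yourself as ``the main obstacle.'' You describe the local moves informally as ``cyclically rotating the residue class occupied by one character'' or ``exchanging two sibling sub-blocks,'' and appeal to the $2^5$ slack in $M_\mathbf{n}$ as scratch space. Neither of these is available off the shelf. Rotating one character's class by one step across a block boundary produces a collision with whatever currently occupies the destination residues, so it is never a move on a single character; and for a character whose current gap is already the extreme $2^{\mathbf{n}_\sigma}$ (respectively $2^{\mathbf{n}_\sigma-1}$) the shifted gap $2^{\mathbf{n}_\sigma}+1$ (resp.\ $2^{\mathbf{n}_\sigma-1}-1$) leaves the allowed window, so whether a block can be pushed in a given direction depends on the joint state of all characters inside it. Making this precise is where the entire content of the lemma lives: the paper encodes ``can be pushed right'' / ``can be pushed left'' / ``can be pushed arbitrarily'' as the vertex being $\mathbf{n}$-expandable / $\mathbf{n}$-contractible / $\mathbf{n}$-super-contractible, proves these properties propagate up the tree by an induction (Propositions~\ref{prop:expandable-contractible-leaf}, \ref{prop:expandable-contractible-parent}, \ref{prop:expandable-contractible-isolated}), uses them to realize permutations of isolated subtrees (Lemma~\ref{lem:perm}, Propositions~\ref{prop:2-swap}, \ref{prop:4-perm}, \ref{prop:4-swap}), shows that any uniform tree can be ``sorted'' into a normal form (Proposition~\ref{prop:sorted-tree}), and separately proves that $\PBA_\mathbf{n}$ is connected under a two-coordinate-difference relation (Claim~\ref{claim:Gamma-connected}) so that it suffices to handle adjacent $\mathbf{p},\mathbf{p}'$. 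Your write-up names none of these mechanisms and explicitly defers the ``boundary bookkeeping'' and ``genuine casework,'' which is precisely the proof. As it stands, the proposal gives a plausible architecture but does not establish the existence of a single valid elementary bridge, so part (2) is unproven.
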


\begin{remark}
\label{rmk:n-p-good-strings}
  In the context of the previous lemma, since $\mathbf{n}\in\N^\Sigma$ and $\mathbf{p}, \mathbf{p}'\in\PBA_\mathbf{n}$, we get that $\mathbf{p}_\sigma, \mathbf{p}_\sigma' > 0$ for all $\sigma\in\Sigma$. Since $s$ is $(\mathbf{n}, \mathbf{p})$-uniform and $s'$ is $(\mathbf{n}, \mathbf{p}')$-uniform, we should have $\Sigma_s = \Sigma_{s'} = \Sigma$. Moreover, for any $t$ with $s\to_{\mathbf{n}, t} s'$, we should have $\Sigma_t = \Sigma_s = \Sigma$ and $t$ should be locally $\mathbf{n}$-compatible. Since $\Sigma_t=\Sigma$, local $\mathbf{n}$-compatibility of $t$ implies that $t$ is $\mathbf{n}$-compatible.
\end{remark}

Proving this lemma is the main technical effort of the proof of Theorem ~\ref{thm:2-quasi-regular}.
Now, we turn to the proof of Theorem ~\ref{thm:2-quasi-regular}. We will prove Lemma ~\ref{lem:n-p-good-strings} later.

\begin{proof}[Proof of Theorem ~\ref{thm:2-quasi-regular}]
  Let $\Sigma$ be a finite alphabet and $\mathbf{p}$ be a probability distribution on $\Sigma$. Without loss of generality we may assume $\supp(\mathbf{p}) = \Sigma$. We want to find a sequence $s\in \Sigma^\N$ with $d_s=\mathbf{p}$ and $\text{QR}(s)\leq 2$.
  
  By Lemma ~\ref{claim:convex-hull}, $\mathbf{p}$ is in the convex hull of $\PBA_\mathbf{n}$ for some frame $\mathbf{n}\in \N^\Sigma$ on $\Sigma$. So, there are $\mathbf{p}_1,\ldots,\mathbf{p}_m\in\PBA_\mathbf{n}$ and $0\leq\alpha_1,\ldots,\alpha_m\leq 1$ such that $\mathbf{p} = \sum_{i=1}^m \alpha_i \mathbf{p}_i$ and $\sum_{i=1}^m \alpha_i =1$.

  Let $s_m^0$ be an $(\mathbf{n}, \mathbf{p}_m)$-uniform string, which exists by Lemma ~\ref{lem:n-p-good-strings} part (1).
  For $n=1,2,\ldots$ do the following:
  \begin{itemize}
  
    \item Let $s_1^n = (\mathbf{n},\mathbf{p}_1)[s_m^{n-1}]$  and $t_m^{n-1} = \mathbf{n}[s_m^{n-1}\leftrightarrow s_1^n]$, which are defined in Lemma ~\ref{lem:n-p-good-strings} part (2).
    
    \item For $1<i\leq m$, let $s_i^n = (\mathbf{n},\mathbf{p}_i)[s_{i-1}^n]$ and $t_{i-1}^n = \mathbf{n}[s_{i-1}^n\leftrightarrow s_i^n]$, which are defined in Lemma ~\ref{lem:n-p-good-strings} part (2).
    
    \item Let $r_i^n = \floor{\alpha_i n}+1$.
    
    \item Let $t_n = (s_1^n)^{\wedge_\mathbf{n} r_1^n}\diamond_\mathbf{n} t_1^n\diamond_\mathbf{n} (s_2^n)^{\wedge_\mathbf{n} r_2^n}\diamond_\mathbf{n} t_2^n\diamond_\mathbf{n} \cdots \diamond_\mathbf{n} (s_m^n)^{\wedge_\mathbf{n} r_m^n}\diamond_\mathbf{n} t_m^n$.\\
    Note that by Remark~\ref{rmk:n-repeatable} each $(s_i^n)^{\wedge_\mathbf{n} r_i^n}$ is $\mathbf{n}$-compatible, and by Remark ~\ref{rmk:n-p-good-strings} every $t_i^n$ is $\mathbf{n}$-compatible. Hence, by Lemma~\ref{lem:string-graph} part (3), $t_n$ is $\mathbf{n}$-compatible.
    
  \end{itemize}
  Finally, let $t=t_1\diamond_\mathbf{n} t_2\diamond_\mathbf{n} \cdots$. Note that, again, by Lemma~\ref{lem:string-graph} part (3), $t$ is $\mathbf{n}$-compatible, which implies that it is 2-quasi-regular. Since the lengths of $t_i^n$'s are bounded (we have finitely many $s_i^n$'s), we see that $d_t=\sum_{i=1}^m \alpha_i \mathbf{p}_i = \mathbf{p}$.
\end{proof}

\vspace{1cm}
\section{Proof of Lemma ~\ref{lem:n-p-good-strings}}

\subsection{Proof of the First Claim in the Lemma}
Let $\Sigma$ be a finite alphabet. For $N\in\N$, there is a bijection between strings on $[2^N]$ with characters from $\Sigma$, and full-binary rooted trees of depth $N$ with leaves labeled with characters of $\Sigma$. We call such trees {\em $\Sigma$-labeled $N$-deep trees}. Here we explain this bijection. If $T$ is such a tree, for any leaf $v\in T$, there is a unique simple path from the root of the tree, $\troot_T$, to $v$. We can code this path by a sequence of 0's and 1's. Anytime we go to the left child, we put a 0, and anytime we go to the right child, we put a 1. This way we get $d_0,d_1, \ldots, d_{N-1}$, where $d_0$ corresponds to the first step of the path. This determines a string $s\in \Sigma^{[2^N]}$, with $s(\overline{d_{N-1}\cdots d_0}+1)$ equal to the label of $v$ in $T$, where $\overline{d_{N-1}\cdots d_0}= \sum_{i=0}^{N-1} d_i 2^i$.
We denote the tree corresponding to a string $s\in \Sigma^{[2^N]}$ by $\stree(s)$, and the string corresponding to a $\Sigma$-labeled $N$-deep tree $T$ by $\tstring(T)$. This justifies using such strings and such trees interchangeably.

For a $\Sigma$-labeled $N$-deep tree, we can expand the labels on the leaves to more vertices of the tree. Let $T$ be such a tree, $v$ be a vertex in $T$, and $\sigma\in \Sigma$. We give $v$ label $\sigma$ if all the leaves in the subtree of $T$ rooted at $v$ have label $\sigma$.

\begin{remark}
  Note that if $J\subseteq [2^{M_\mathbf{n}}]$ is an $\mathbf{n}$-saturated arithmetic progression then we have a bijection between strings on $J$ with characters from $\Sigma$, and $\Sigma$-labeled $\log_2(\abs{J})$-deep trees.
\end{remark}

In order to work with $\Sigma$-labeled $N$-deep trees easier, we introduce a set of notations and definitions.

\begin{definition}[Trees]
  Let $\Sigma$ be a finite alphabet, $N\in\N$, $T$ be a $\Sigma$-labeled $N$-deep tree, and $v$ be a vertex in $T$.
  \begin{enumerate}
  \item We denote the {\em root} of $T$ by $\troot_T$.
  
  \item If $v\neq \troot_T$, the {\em parent} of $v$ is the first node (after $v$) in the unique simple path from $v$ to $\troot_T$, and the {\em sibling} of $v$ is the unique vertex $w\neq v$ in $T$ which has the same parent as $v$.
  
  \item We denote by $\depth_v$ the {\em depth} of $v$ in $T$, which is equal to the distance between $v$ and $\troot_T$.

  \item We denote by $T_v$ the {\em subtree} of $T$ rooted at $v$. Note that $T_v$ is a $\Sigma$-labeled $(N-\depth_v)$-deep tree.
  
  \item $v$ determines an $N$-saturated arithmetic progression in $[2^N]$. Call this arithmetic progression $I_v$. Obviously $\diff_{I_v} = 2^{\depth_v}$, where $\diff_J$ is the common difference of $J$ when $J$ is an arithmetic progression.
  
  \item If $w$ is also a vertex in $T$ and $\depth_v=\depth_w$, then
  $$d(v\to w) \coloneqq \inf\{d\geq 0\mid d+I_v=I_w \mod 2^N \}.$$
  
  \item We denote by $\tstring(v)$ the restriction of $\tstring(T)$ to $I_v$. So, $\tstring(v)\in\Sigma^{I_v}$ and $\tstring(T) = \tstring(\troot_T)$.

  \item For $\sigma\in\Sigma$ let $\text{num}_T(\sigma)$ be the number of leaves in $T$ with label $\sigma$.
  
  \item We can also define a {\em density function} for such trees; $$d_T(\sigma) = \text{num}_{T}(\sigma)/2^N.$$ We have $d_T=d_{\,\tstring(T)}$.
  
  \item An {\em isomorphism} of $\Sigma$-labeled $N$-deep trees is an isomorphism of binary trees that respects the labels, but does not necessarily respect the order of children for a vertex. When $T^{(1)}$ and $T^{(2)}$ are isomorphic $\Sigma$-labeled $N$-deep trees, we can write $T^{(1)}\cong T^{(2)}$.

  \item Let $\mathbf{n}\in \N^{\Sigma_\mathbf{n}}$ be a frame on $\Sigma$ and assume that $N=M_\mathbf{n}$. We say $v$ is {\em $\mathbf{n}$-isolated in $T$}, if for any character $\sigma\in \Sigma$ that appears both in $T_v$ (i.e.\ at least one of the leaves of $T_v$ has label $\sigma$) and outside of $T_v$ in $T$ (i.e.\ at least one of the leaves of $T$ which is not in $T_v$ has label $\sigma$), we have $\sigma\notin\Sigma_\mathbf{n}$. In other words, no $\sigma\in\Sigma_\mathbf{n}$ appears both in $T_v$ and outside of $T_v$.
  \end{enumerate}
\end{definition}

Here, parallel to Definitions ~\ref{def:compatible-strings} and ~\ref{def:uniform-strings}, which are for strings, we define compatible and uniform trees.

\begin{definition}[Compatible trees]
  Let $\Sigma$ be a finite alphabet, $\mathbf{n}\in \N^{\Sigma_\mathbf{n}}$ be a frame on $\Sigma$, and $T$ be a $\Sigma$-labeled $M_\mathbf{n}$-deep tree. We say that $T$ is {\em $\mathbf{n}$-compatible} if $\tstring(T)$ is $\mathbf{n}$-compatible.
\end{definition}

\begin{definition}[Uniform trees]
\label{def:n-p-uniform-tree}
  Let $\Sigma$ be a finite alphabet, $\mathbf{n}\in \N^{\Sigma_\mathbf{n}}$ be a frame on $\Sigma$, $\mathbf{p}$ be a probability distribution on $\Sigma$, and $T$ be a $\Sigma$-labeled $M_\mathbf{n}$-deep tree. We say that $T$ is {\em $(\mathbf{n},\mathbf{p})$-uniform} if
  \begin{itemize}
    \item $d_T=\mathbf{p}$, and
    
    \item For any $\sigma\in\Sigma_\mathbf{n}$, there is a shallowest vertex $v$ (i.e.\ vertex with the least depth) in $T$ labeled $\sigma$, such that any leaf $x\in T$ labeled $\sigma$ is either in $T_v$ or in $T_w$, where $w$ is $v$'s sibling.\\
    In this case, note that such $v$ is unique and set $c_T(\sigma)=v$
  \end{itemize}
\end{definition}

Let $\Sigma$ be a finite alphabet, $\mathbf{n}\in \N^{\Sigma_\mathbf{n}}$ be a frame on $\Sigma$, and $\mathbf{p}\in\PBA_\mathbf{n}$. Note that $(\mathbf{n},\mathbf{p})$-uniform trees are also $\mathbf{n}$-compatible. Moreover, it follows from the definitions that $s\in\Sigma^{[2^{M_\mathbf{n}}]}$ is $(\mathbf{n}, \mathbf{p})$-uniform iff $\stree(s)$ is $(\mathbf{n}, \mathbf{p})$-uniform.

\begin{proof}[Proof of Lemma~\ref{lem:n-p-good-strings} part (1)]
  Let $\Sigma$ be a finite alphabet, $\mathbf{n}\in \N^\Sigma$ be a frame on $\Sigma$, and $\mathbf{p}\in\PBA_\mathbf{n}$. By the previous paragraph, to prove Lemma~\ref{lem:n-p-good-strings} part (1), we need to show that there exists an $(\mathbf{n}, \mathbf{p})$-uniform tree. For that, follow this procedure:
  \begin{enumerate}
  \item If $\sigma = \irr(\mathbf{p})\neq \ \sim$, write $\mathbf{p}_\sigma$ as a sum of different powers of two. For each power of two in the sum, say $2^z$, add a character with density equal to $2^z$ to $\Sigma$. Call these starred characters. Finally remove $\sigma$ from $\Sigma$.
  
  \item Make the Huffman coding of $\Sigma$ for the given densities with one modification: when you choose the two smallest characters in each step, always give priority to the starred characters. If one of the two smallest characters is starred, consider the new character starred, too.
  
  \item This way we obtain a binary tree. In this tree change the labels of all the starred leaves to $\sigma$. Expand this tree to get a full-binary rooted tree of depth $M_\mathbf{n}$. Call this tree $T$. It is not difficult to see that $T$ is $(\mathbf{n}, \mathbf{p})$-uniform.
  \end{enumerate}
\end{proof}

To show the second part of Lemma~\ref{lem:n-p-good-strings}, we need to build more tools that allow us manipulate $\Sigma$-labeled $M_\mathbf{n}$-deep trees.

\subsection{Expandable, Contractible, and Super-Contractible Vertices}

\begin{definition}
  Let $\Sigma$ be a finite alphabet, $\mathbf{n}\in \N^{\Sigma_\mathbf{n}}$ be a frame on $\Sigma$, and $J\subseteq [2^{M_\mathbf{n}}]$ be an $\mathbf{n}$-saturated arithmetic progression. Note that we have
  $\abs{J}=2^{M_\mathbf{n}}/\diff_J$. Let $a = \min J$.
  
  For $i\in [0,\abs{J}]$ and $k, d\in \N$ define:
  \begin{align*}
    \DOM^\infty_{\mathbf{n}}(J, i, k, d) =& \ \Big( a + \diff_J\cdot [0,k|J|+i) \Big) \\
    \cup &\ \Big(a+ \diff_J (k\abs{J}+i-1) + d + \diff_J\cdot[0,+\infty) \Big) \\
    \DOM_{\mathbf{n}}(J, i, k, d) =& \ \DOM^\infty_{\mathbf{n}}(J,i,k,d) \cap \big[(k+3)2^{M_\mathbf{n}}\big]\\
    \STR_\mathbf{n}(J, i, k, d) =& \ \DOM_{\mathbf{n}}(J, i, k, d)[\mathbf{n}, +1]\\
    \END_\mathbf{n}(J, i, k, d) =& \ \DOM_{\mathbf{n}}(J, i, k, d)[\mathbf{n}, -1]
  \end{align*}
\end{definition}

Here we explain $A = \DOM^\infty_\mathbf{n}(J,i,k,d)$ in words. $A$ is almost an arithmetic progression; it starts with the first element of $J$ and continues as an arithmetic progression with the common difference $\diff_J$ for $k\abs{J}+i$ elements. Let $x$ be the last element in our arithmetic progression so far. The next element of $A$ is equal to $x+d$, and after that, again, $A$ continues as an arithmetic progression with the common difference $\diff_J$. In other words, $A$ consists of two arithmetic progressions with the common difference $\diff_J$ which are concatenated by the difference $d$.

We can see that $\STR_\mathbf{n}(J,i,k,d)$ is the initial segment of $\DOM_\mathbf{n}(J,i,k,d)$, which is equal to $J$; and $\END_\mathbf{n}(J,i,k,d)$ is the terminal segment of $\DOM_\mathbf{n}(J,i,k,d)$, which is equal to $d+J$ mod $2^{M_\mathbf{n}}$.

\begin{definition}[Relaxable strings]
\label{def:relaxable}
  Let $\Sigma$ be a finite alphabet, $\mathbf{n}\in \N^{\Sigma_\mathbf{n}}$ be a frame on $\Sigma$, $J\subseteq [2^{M_\mathbf{n}}]$ be an $\mathbf{n}$-saturated arithmetic progression, and $s\in \Sigma^J$.
  For $i\in [0,\abs{J}]$ and $d\in \N$, we say that $s$ is {\em $(i,d)_\mathbf{n}$-relaxable} if there are $k\in \N$, $u\in\Sigma^{\END}$, and $t\in \Sigma^{\DOM}$ such that $s\to_{\mathbf{n}, t} u$ and $\stree(u)\cong \stree(s)$ as $\Sigma$-labeled $\log_2(\abs{J})$-deep trees, where $\DOM=\DOM_{\mathbf{n}}(J, i, k, d)$ and $\END=\END_{\mathbf{n}}(J, i, k, d)$.
\end{definition}

\begin{definition}[Expandable, contractible, and super-contractible vertices]
  Let $\Sigma$ be a finite alphabet, $\mathbf{n}\in \N^{\Sigma_\mathbf{n}}$ be a frame on $\Sigma$, and $T$ be a $\Sigma$-labeled $M_\mathbf{n}$-deep tree. Let $v$ be a vertex in $T$ and let $W\subseteq T$ be a subset of vertices in $T$. Set $I_W = \cup_{w\in W}I_w$.
  \begin{enumerate}  
    \item We say that $v$ is {\em $\mathbf{n}$-expandable with respect to $W$}, if $I_v\subseteq I_W$ and $\tstring(v)$ is $(i,d)_\mathbf{n}$-relaxable for all $i\in [0,\abs{I_v}]$ and $\diff_{I_v} \leq d \leq 2 \ \diff_{I_v}$ with $d+I_v \subseteq I_W \mod 2^{M_{\mathbf{n}}}$.
    
    \item We say that $v$ is {\em $\mathbf{n}$-contractible with respect to $W$}, if $I_v\subseteq I_W$ and $\tstring(v)$ is $(i,d)_\mathbf{n}$-relaxable for all $i\in [0,\abs{I_v}]$ and $\diff_{I_v}/2 \leq d \leq \diff_{I_v}$ with $d+I_v \subseteq I_W \mod 2^{M_{\mathbf{n}}}$.
    
    \item We say that $v$ is {\em $\mathbf{n}$-super-contractible with respect to $W$}, if $I_v\subseteq I_W$ and $\tstring(v)$ is $(i,d)_\mathbf{n}$-relaxable for all $i\in [0,\abs{I_v}]$ and $1 \leq d \leq \diff_{I_v}$ with $d+I_v \subseteq I_W \mod 2^{M_{\mathbf{n}}}$.
  \end{enumerate}
\end{definition}

Note that $\mathbf{n}$-super-contractible vertices with respect to a given subset of $T$ are also $\mathbf{n}$-contractible with respect to that subset. Also, if $v$ is $\mathbf{n}$-(expandable/contractible/super-contractible) with respect to $W$, and $I_v\subseteq I_{W'}\subseteq I_W$, then $v$ is also $\mathbf{n}$-(expandable/contractible/super-contractible) with respect to $W'$.

\begin{proposition}
\label{prop:expandable-contractible-leaf}
  Let $\Sigma$ be a finite alphabet, $\mathbf{n}\in \N^{\Sigma_\mathbf{n}}$ be a frame on $\Sigma$, and $T$ be a $\Sigma$-labeled $M_\mathbf{n}$-deep tree. Assume that $v\in T$ is a vertex with label $\sigma$. The following hold.
  \begin{enumerate}
    \item If $\sigma\in\Sigma_\mathbf{n}$ and $\diff_{I_v} = 2^{\mathbf{n}_\sigma-1}$, then $v$ is $\mathbf{n}$-expandable with respect to $\{\troot_T\}$.
    
    \item If $\sigma\in\Sigma_\mathbf{n}$ and $\diff_{I_v} = 2^{\mathbf{n}_\sigma}$, then $v$ is $\mathbf{n}$-contractible with respect to $\{\troot_T\}$.
    
    \item If $\sigma\notin\Sigma_\mathbf{n}$, then $v$ is both $\mathbf{n}$-expandable and $\mathbf{n}$-super-contractible with respect to $\{\troot_T\}$.
  \end{enumerate}
\end{proposition}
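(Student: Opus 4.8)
The plan is to verify, in each of the three cases, that the single leaf (or labelled vertex) $v$ satisfies the definition of the relevant notion with respect to $W=\{\troot_T\}$; note that $I_W = [2^{M_\mathbf{n}}]$, so the side condition $d+I_v\subseteq I_W \bmod 2^{M_\mathbf{n}}$ is automatic, and what must really be checked is that $\tstring(v)$ is $(i,d)_\mathbf{n}$-relaxable for every admissible pair $(i,d)$. Recall that $\tstring(v)$ is just the $\Sigma$-labelled $\log_2|I_v|$-deep tree that is entirely constant with value $\sigma$ (since $v$ is labelled $\sigma$, every leaf of $T_v$ has label $\sigma$). So in each case the target tree in the isomorphism requirement of Definition~\ref{def:relaxable} is again the all-$\sigma$ tree on $|I_v|$ leaves, and the content of the proof is: given $i$ and $d$ in the prescribed range, explicitly build a witness $(k, u, t)$ with $u\in\Sigma^{\END_\mathbf{n}(J,i,k,d)}$ (where $J=I_v$), $t\in\Sigma^{\DOM_\mathbf{n}(J,i,k,d)}$, $s\to_{\mathbf{n},t}u$, and $\stree(u)\cong\stree(s)$.

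The concrete construction of $t$: take $t$ to put the character $\sigma$ on \emph{every} element of $\DOM_\mathbf{n}(J,i,k,d)$. Then $\Sigma_t=\Sigma_s=\Sigma_u=\{\sigma\}$, and $u=t[\mathbf{n},-1]$ is the all-$\sigma$ string on $\END_\mathbf{n}(J,i,k,d)=d+J \bmod 2^{M_\mathbf{n}}$, which is again an $\mathbf{n}$-saturated arithmetic progression with common difference $\diff_J$, so $\stree(u)\cong\stree(s)$. The only nontrivial requirement left is \emph{local $\mathbf{n}$-compatibility of $t$}: if $\sigma\notin\Sigma_\mathbf{n}$ this is vacuous (no constraint on $\sigma$'s gaps), which immediately settles case (3) for all $1\le d\le\diff_{I_v}$, simultaneously giving both $\mathbf{n}$-expandability and $\mathbf{n}$-super-contractibility. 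If $\sigma\in\Sigma_\mathbf{n}$, local $\mathbf{n}$-compatibility requires $2^{\mathbf{n}_\sigma-1}\le\text{MinGap}(t,\sigma)\le\text{MaxGap}(t,\sigma)\le 2^{\mathbf{n}_\sigma}$. Since $t$ is all $\sigma$, its gaps are exactly the gaps of $\DOM_\mathbf{n}(J,i,k,d)$ as a subset of $\N$: these are $\diff_J$ everywhere except at the single "seam", where the gap is $d$. So $\text{MinGap}=\min(\diff_J,d)$ and $\text{MaxGap}=\max(\diff_J,d)$ (for $k$ large enough that both an interior $\diff_J$-gap and the $d$-gap actually occur inside the truncation $[(k+3)2^{M_\mathbf{n}}]$, which one can arrange). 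Hence local $\mathbf{n}$-compatibility holds precisely when both $\diff_J$ and $d$ lie in $[2^{\mathbf{n}_\sigma-1}, 2^{\mathbf{n}_\sigma}]$.

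This is exactly what the hypotheses of (1) and (2) deliver. In case (1), $\diff_{I_v}=2^{\mathbf{n}_\sigma-1}$, and the expandability range is $2^{\mathbf{n}_\sigma-1}=\diff_{I_v}\le d\le 2\diff_{I_v}=2^{\mathbf{n}_\sigma}$, so $d\in[2^{\mathbf{n}_\sigma-1},2^{\mathbf{n}_\sigma}]$ for every admissible $d$, and the all-$\sigma$ $t$ works. In case (2), $\diff_{I_v}=2^{\mathbf{n}_\sigma}$, and the contractibility range is $\diff_{I_v}/2=2^{\mathbf{n}_\sigma-1}\le d\le\diff_{I_v}=2^{\mathbf{n}_\sigma}$, again forcing $d\in[2^{\mathbf{n}_\sigma-1},2^{\mathbf{n}_\sigma}]$, so the same $t$ works. (Note that super-contractibility in case (2) would have required $d$ as small as $1$, giving a $\text{MinGap}$ below $2^{\mathbf{n}_\sigma-1}$ — which is why only contractibility is claimed there.)

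The step I expect to be the most delicate is not any of the inequalities above but the bookkeeping around the truncation to $[(k+3)2^{M_\mathbf{n}}]$ in the definition of $\DOM_\mathbf{n}$ and the requirement in $s\to_{\mathbf{n},t}u$ that the domains of $s$, $u$ be inside $[2^{M_\mathbf{n}}]$ while $t$'s domain has the right "block length": one has to check that $\DOM_\mathbf{n}(J,i,k,d)[\mathbf{n},+1]=J=\STR_\mathbf{n}(J,i,k,d)$ and $\DOM_\mathbf{n}(J,i,k,d)[\mathbf{n},-1]=d+J \bmod 2^{M_\mathbf{n}}$ (this is the remark immediately following the definition of $\DOM^\infty_\mathbf{n}$), and that for the chosen $k$ the truncated domain still realizes both gap values so that the $\text{MinGap}/\text{MaxGap}$ computation above is valid. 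All of this is routine unwinding of the definitions of $\DOM_\mathbf{n}$, $\STR_\mathbf{n}$, $\END_\mathbf{n}$, and $[\mathbf{n},\pm 1]$, so I would state it as such and present the all-$\sigma$ witness explicitly.
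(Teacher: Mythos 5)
Your proposal is correct and follows essentially the same route as the paper: take $t$ and $u$ to be the all-$\sigma$ strings on $\DOM_\mathbf{n}(I_v,i,k,d)$ and $\END_\mathbf{n}(I_v,i,k,d)$, observe that the gaps of $t$ are $\diff_{I_v}$ and $d$, and check that the hypotheses of each case put both in $[2^{\mathbf{n}_\sigma-1},2^{\mathbf{n}_\sigma}]$ (or make local $\mathbf{n}$-compatibility vacuous when $\sigma\notin\Sigma_\mathbf{n}$). The only cosmetic difference is that the paper fixes $k=2$ outright rather than leaving it as ``large enough,'' and proves (1) in detail with (2) and (3) declared similar, whereas you treat all three uniformly.
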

\begin{proof}
    We will show (1). The proofs of (2) and (3) are very similar. Let $s = \tstring(v)$. We need to show that $s$ is $(i,d)_\mathbf{n}$-relaxable for $i\in[0,\abs{I_v}]$ and $d\in\N$ with $\diff_{I_v}\leq d\leq 2\,\diff_{I_v}$. We show that $k=2$ works in Definition ~\ref{def:relaxable}. Let $\DOM=\DOM_{\mathbf{n}}(I_v, i, 2, d)$, $\STR=\STR_{\mathbf{n}}(I_v, i, 2, d)$, $\END=\END_{\mathbf{n}}(I_v, i, 2, d)$.\\
    Let $t\in \Sigma^{\DOM}, u\in\Sigma^{\END}$ be strings with all characters equal to $\sigma$. Since $\stree(u)$ and $\stree(s) = T_v$ have the same depth, and all vertices in both of them are labeled $\sigma$, we have $\stree(u) \cong \stree(s)$. So, all we need to show is that $s\to_{\mathbf{n}, t} u$.
    \begin{itemize}
      \item $\Sigma_s = \Sigma_u = \Sigma_t = \{\sigma\}$.
      
      \item The gaps between appearances of $\sigma$ in $t$ are either $\diff_{I_v}$ or $d$. We know that $\diff_{I_v} = 2^{\mathbf{n}_\sigma-1},$ and
      $$2^{\mathbf{n}_\sigma-1} = \diff_{I_v} \leq d \leq 2\,\diff_{I_v} = 2^{\mathbf{n}_\sigma}.$$
      So, the gaps are bounded between $2^{\mathbf{n}_\sigma-1}$ and $2^{\mathbf{n}_\sigma}$. Since $\Sigma_t = \{\sigma\}$, this shows that $t$ is locally $\mathbf{n}$-compatible.
      
      \item Obviously, $t[\mathbf{n}, +1]$ is the constant $\sigma$ string on $\STR = I_v$, which is equal to $s$.
      \item Similarly, $t[\mathbf{n}, -1]$ is the constant $\sigma$ strong on $\END$, which is equal to $u$.
    \end{itemize}
So, this shows that $s\to_{\mathbf{n}, t} u$. Hence $s$ is $(i,d)_\mathbf{n}$-relaxable for all $i\in[0,\abs{I_v}]$ and $\diff_{I_v} \leq d \leq 2\,\diff_{I_v}$, which means $v$ is $\mathbf{n}$-expandable with respect to $\{\troot_T\}$.
\end{proof}

\begin{proposition}
\label{prop:expandable-contractible-parent}
  Let $\Sigma$ be a finite alphabet, $\mathbf{n}\in \N^{\Sigma_\mathbf{n}}$ be a frame on $\Sigma$, and $T$ be a $\Sigma$-labeled $M_\mathbf{n}$-deep tree. Assume that $x,y,z\in T$, $W\subseteq T$, where $x, y$ are the two different children of $z$ and each of $x$ and $y$ is either $\mathbf{n}$-expandable or $\mathbf{n}$-contractible with respect to $W$. Moreover, assume that $\Sigma_{\tstring(x)}\cap\Sigma_\mathbf{n}$ and $\Sigma_{\tstring(y)}\cap\Sigma_\mathbf{n}$ are disjoint. 
  \begin{enumerate}
    \item If $x$ and $y$ are both $\mathbf{n}$-contractible with respect to $W$, then $z$ is $\mathbf{n}$-super-contractible with respect to $W$.
    
    \item If either of $x$ or $y$ is $\mathbf{n}$-expandable with respect to $W$, then $z$ is also $\mathbf{n}$-expandable with respect to $W$.
    
    \item If either of $x$ or $y$ is $\mathbf{n}$-super-contractible with respect to $W$, then $z$ is also $\mathbf{n}$-super-contractible with respect to $W$.
  \end{enumerate}
\end{proposition}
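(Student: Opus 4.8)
The statement to prove is Proposition~\ref{prop:expandable-contractible-parent}: given $z$ with children $x,y$, each $\mathbf{n}$-expandable or $\mathbf{n}$-contractible with respect to $W$, and $\Sigma_{\tstring(x)}\cap\Sigma_\mathbf{n}$, $\Sigma_{\tstring(y)}\cap\Sigma_\mathbf{n}$ disjoint, we must show three inheritance properties for $z$. Each of the three parts asks us to verify that $\tstring(z)$ is $(i,d)_\mathbf{n}$-relaxable for $i\in[0,\abs{I_z}]$ and $d$ in a prescribed range (depending on which part). The plan is to reduce each relaxability assertion for $z$ to relaxability assertions for $x$ and $y$ with suitably translated parameters, and then glue the witnessing strings using Lemma~\ref{lem:string-graph} part (2).

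**Key computation: translating the parameters.** I would start by working out the arithmetic relating $\DOM_\mathbf{n}(I_z,i,k,d)$ to the analogous domains for the children. Since $\diff_{I_z} = 2\,\diff_{I_x} = 2\,\diff_{I_y}$ and $I_x, I_y$ partition $I_z$ (as subsets of residues mod $2^{M_\mathbf{n}}$, $I_x$ and $I_y$ are the even- and odd-indexed elements of $I_z$), a ``$d$-jump'' at position $i$ in the $z$-progression, where $d$ lies in the range for part~(1), (2), or~(3) respectively, corresponds to jumps of size $d$ in \emph{both} child progressions at appropriately halved positions. Concretely: the position $i\in[0,\abs{I_z}]$ in $I_z$ splits into positions $\lceil i/2\rceil$ and $\lfloor i/2\rfloor$ in $I_x$ and $I_y$; and the jump value $d$ for $z$ translates to jump values $d$ for each child, whose ranges I must check land inside the expandable range $[\diff_{I_x}, 2\diff_{I_x}]$ when we need expandability, or the (super-)contractible ranges otherwise. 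For part~(2): if $x$ is expandable, $d\in[\diff_{I_z},2\diff_{I_z}]=[2\diff_{I_x},4\diff_{I_x}]$; this is too large for $x$'s expandable range directly, so the jump must be \emph{split} — part of the $d$-jump handled in the $x$-layer, part in the $y$-layer, with the ``carry'' accounting for the factor of two. This splitting — deciding how to distribute a jump of size up to $2\diff_{I_z}$ between a contractible child (range $[\diff/2,\diff]$ or $[\diff,2\diff]$) and an expandable child (range $[\diff,2\diff]$) so the totals match — is the heart of the argument and where I expect the real work to be. I would handle it by casework on $d$ relative to $3\diff_{I_x}$ (or similar thresholds) and on which of $x,y$ is expandable versus contractible.

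**Gluing.** Once the parameter bookkeeping is done, for each $i,d$ I obtain, by the relaxability hypotheses on $x$ and $y$, integers $k_x,k_y$, strings $t_x\in\Sigma^{\DOM_x}$, $u_x\in\Sigma^{\END_x}$ with $\tstring(x)\to_{\mathbf{n},t_x}u_x$ and $\stree(u_x)\cong\stree(\tstring(x))$, and similarly for $y$. Taking $k=\max(k_x,k_y)$ — and using that relaxability for one value of $k$ gives it for all larger $k$ (one extends the tail progression, which one should note is immediate from the definition of $\DOM^\infty$) — I can assume $k_x=k_y=k$, so that $\DOM_x,\DOM_y$ have equal $\max_\mathbf{n}$ and are disjoint (they sit on complementary residue classes mod $\diff_{I_z}$). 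Then Lemma~\ref{lem:string-graph} part~(2), using the disjointness of $\Sigma_{\tstring(x)}\cap\Sigma_\mathbf{n}$ and $\Sigma_{\tstring(y)}\cap\Sigma_\mathbf{n}$, gives $\tstring(x)\vee\tstring(y)\to_{\mathbf{n},t_x\vee t_y}u_x\vee u_y$; but $\tstring(x)\vee\tstring(y)=\tstring(z)$ and $u_x\vee u_y$ is a string on $\END_\mathbf{n}(I_z,i,k,d)$, with $\stree(u_x\vee u_y)\cong\stree(u_x)\sqcup\stree(u_y)\cong\stree(\tstring(z))$ (joining two isomorphic-to-children trees at a common root). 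That is exactly the witness needed for $(i,d)_\mathbf{n}$-relaxability of $\tstring(z)$.

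**Main obstacle.** The delicate point is not the gluing — that is mechanical given Lemma~\ref{lem:string-graph} — but confirming that the jump-splitting is always possible within the allowed ranges, in particular for part~(2) where one child may only be contractible (small jumps) while the required $z$-jump is as large as $2\diff_{I_z}=4\diff_{I_x}$. I expect to need the precise inequality that a contractible child absorbs $d_{\mathrm{child}}\in[\diff/2,\diff]$ and an expandable one absorbs $[\diff,2\diff]$, and to verify that for every target $d\in[2\diff_{I_x},4\diff_{I_x}]$ there is a valid split $(d_x,d_y)$ with $d_x+d_y$ matching $d$ after accounting for the interleaving geometry; a short case analysis on the parity of $i$ and the size of $d$ should close it. Parts~(1) and~(3) are easier since there the required $z$-range ($1\le d\le\diff_{I_z}$) is smaller, and I would remark that they follow by the same method with lighter casework.
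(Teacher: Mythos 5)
Your high-level plan---reduce relaxability of $\tstring(z)$ to relaxability of $\tstring(x)$ and $\tstring(y)$ with translated parameters, equalize the $k$'s by extending, glue with Lemma~\ref{lem:string-graph}~(2), and check the tree isomorphism---matches the paper's proof, and the halving $i_x=\lfloor i_z/2\rfloor$, $i_y=\lfloor (i_z+1)/2\rfloor$ is right. But the arithmetic picture you have is inverted and, more seriously, you miss the genuinely hard case.

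First, a definitional error: you write $\diff_{I_z}=2\diff_{I_x}$, but since $\depth_x=\depth_z+1$ and $\diff_{I_v}=2^{\depth_v}$, it is the children whose common differences are larger: $\diff_{I_x}=\diff_{I_y}=2\diff_{I_z}$. Relatedly, the picture of ``splitting a single $d$-jump'' across the two layers is not what happens. Each child's relaxed domain $\DOM_\mathbf{n}(I_x,i_x,k,d_x)$ carries its \emph{own} full jump $d_x$, and the two jumps are related to $d_z$ by integer-multiple offsets of $\diff_{I_z}$ (e.g.\ in part~(1) the paper takes $d_x=d_y=d_z+\diff_{I_z}$; in part~(2), when $i_z$ lands in $x$, it takes $d_x=d_z+2\diff_{I_z}$ and $d_y=d_z$), chosen so that $\DOM_x\sqcup\DOM_y=\DOM_\mathbf{n}(I_z,i_z,k,d_z)$ as interleaved progressions. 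It is not a partition $d=d_x+d_y$, and casework on $d$ relative to thresholds like $3\diff_{I_x}$ will not recover the needed identities.

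Second, and this is the real gap: in part~(2) the one-step reduction only works when $i_z$ belongs to the expandable child. When $i_z$ belongs to the contractible child, no choice of $(i_x,d_x)$, $(i_y,d_y)$ makes the offsets land in the right relaxability ranges while the domains interleave correctly. The paper resolves this with a \emph{two-phase} construction: first apply a ``phase-$0$'' relaxation with $d_z^{(0)}=\diff_{I_z}$ and an $i_z^{(0)}$ belonging to the expandable child, which produces a tree $T^{(1)}$ in which the subtrees at $x$ and $y$ have been swapped (Remark~\ref{rmk:left-right}); then $i_z$ belongs to the now-expandable child and Case~(I) applies; finally the two connecting strings are chained via $\diamond_\mathbf{n}$ using Lemma~\ref{lem:string-graph}~(1). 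This compositionality---that relaxing twice composes into a single relaxation with a larger $k$---is the key idea your outline does not contain, and a single case analysis on the split of $d$ cannot substitute for it. The same device is needed for part~(3).
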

\begin{proof}
  For $i\in[0,\abs{I_z}]$, we say that $i$ {\em belongs} to $x$ if $(\min I_z + i\,\diff_{I_z})\in I_x \mod 2^{M_\mathbf{n}}$, and similarly we say that $i$ belongs to $y$ if $(\min I_z + i\,\diff_{I_z})\in I_y \mod 2^{M_\mathbf{n}}$. Note that each $i\in[0,\abs{I_z}]$ belongs to exactly one of $x$ or $y$.
  
  Let $s_x = \tstring(x), s_y = \tstring(y), s_z = \tstring(z)$.

  \vspace{0.5cm}
  {\em Proof of (1).}
  To show that $z$ is $\mathbf{n}$-super-contractible with respect to $W$, we need to show that $I_z\subseteq I_W$ and $s_z$ is $(i_z,d_z)_\mathbf{n}$-relaxable for any $i_z\in[0,\abs{I_z}]$ and $1\leq d_z\leq \diff_{I_z}$ with $d_z+I_z\subseteq I_W \mod 2^{M_{\mathbf{n}}}$. Note that since $I_x, I_y\subseteq I_W$, we have $I_z=I_x\cup I_y \subseteq I_W$. So, let $i_z\in[0,\abs{I_z}]$ and $1\leq d_z\leq \diff_{I_z}$ with $d_z+I_z\subseteq I_W \mod 2^{M_{\mathbf{n}}}$. Without loss of generality we may assume that $i_z$ belongs to $x$.
  
  Let $d_x = d_y = d_z+\diff_{I_z}$, $i_x = \floor{\frac{i_z}{2}}$, and $i_y=\floor{\frac{i_z+1}{2}}$.
  \begin{itemize}
    \item $i_x\in [0, \abs{I_x}]$.
    \item Since $1\leq d_z\leq \diff_{I_z}$, we get that $\diff_{I_x}/2\leq d_x\leq\diff_{I_x}$.
    \item $d_x + I_x = (d_z + \diff_{I_z}) + I_x \subseteq (d_z+\diff_{I_z}) + I_z = d_z + I_z \subseteq I_W \mod 2^{M_\mathbf{n}}$.
  \end{itemize}
  So, because $x$ is $\mathbf{n}$-contractible with respect to $W$, there are $k_x\in \N, u_x\in\Sigma^{\END_x}, t_x\in \Sigma^{\DOM_x}$ that satisfy the statement in Definition ~\ref{def:relaxable}, where $\DOM_x = \DOM_\mathbf{n}(I_x, i_x, k_x, d_x)$ and $\STR_x,\END_x$ are defined similarly. A similar argument shows there are $k_y\in \N, u_y\in\Sigma^{\END_y}, t_y\in \Sigma^{\DOM_y}$ that satisfy the statement in Definition ~\ref{def:relaxable}, where $\DOM_y = \DOM_\mathbf{n}(I_y, i_y, k_y, d_y)$ and $\STR_y,\END_y$ are defined similarly.
  
  Wihout loss of generality, we may assume that $k_x=k_y$; this is because $(s_x\wedge_\mathbf{n}t_x) \in\Sigma^{\DOM_\mathbf{n}(I_x, i_x, k_x+1, d_x)}$ and $s_x\to_{\mathbf{n},(s_x\wedge_\mathbf{n}t_x)} u_x$, and a similar result holds for $y$. So, let $k_z \coloneqq k_x = k_y$. Let $\DOM_z = \DOM_\mathbf{n}(I_z, i_z, k_z, d_z)$ and define $\STR_z,\END_z$ similarly.
  
  \begin{itemize}
    \item Since $k_x=k_y$, we have $\max_\mathbf{n}\DOM_x = \max_\mathbf{n}\DOM_y$.
    \item $\DOM_x\cap\,\DOM_y =\emptyset$ and $\DOM_x\cup\,\DOM_y= \DOM_z$ (similar results hold for $\STR$ and $\END$).
    \item From the assumptions in the proposition, we know that $\Sigma_{s_x}\cap\Sigma_\mathbf{n}$ and $\Sigma_{s_y}\cap\Sigma_\mathbf{n}$ are disjoint.
  \end{itemize}
   So, by Lemma~\ref{lem:string-graph} part (2), we get $(s_x\vee s_y)\to_{\mathbf{n},(t_x\vee t_y)}(u_x\vee u_y)$. Note that $s_z=s_x\vee s_y$. Let $t_z\coloneqq t_x\vee t_y$ and $u_z\coloneqq u_x\vee u_y$. Since $\stree(u_x)\cong T_x$ and $\stree(u_y)\cong T_y$, we obviously have $\stree(u_z)\cong T_z$. This shows that $s_z$ is $(i_z,d_z)_\mathbf{n}$-relaxable, which completes the proof.
  
  \vspace{0.5cm}
  {\em Proof of (2).}
  If $x$ and $y$ are both $\mathbf{n}$-expandable with respect to $W$, the proof is similar to part (1), and we do not repeat it here. So, without loss of generality assume that $x$ is $\mathbf{n}$-expandable and $y$ is $\mathbf{n}$-contractible with respect to $W$. To show that $z$ is $\mathbf{n}$-expandable with respect to $W$, we need to show that $I_z\subseteq I_W$ and $s_z$ is $(i_z,d_z)_\mathbf{n}$-relaxable for any $i_z\in[0, \abs{I_z}]$ and $\diff_{I_z}\leq d_z\leq 2\,\diff_{I_z}$ with $d_z+I_z\subseteq I_W \mod 2^{M_{\mathbf{n}}}$. Note that since $I_x, I_y\subseteq I_W$, we have $I_z=I_x\cup I_y \subseteq I_W$. So, let $i_z\in[0, \abs{I_z}]$ and $\diff_{I_z}\leq d_z\leq 2\,\diff_{I_z}$ with $d_z+I_z\subseteq I_W \mod 2^{M_{\mathbf{n}}}$. Now, consider the following two cases:
  
  \vspace{0.2cm}
  {\em Case (I): $i_z$ belongs to $x$.}\\
  Let $d_x=d_z+2\,\diff_{I_z}$, $d_y=d_z$, $i_x=\floor{\frac{i_z}{2}}$, and $i_y = \floor{\frac{i_z+1}{2}}$.
  \begin{itemize}
    \item $i_x\in [0, \abs{I_x}]$.
    \item Since $\diff_{I_z}\leq d_z\leq 2\,\diff_{I_z}$, we get that $\diff_{I_x} \leq d_x \leq 2\,\diff_{I_x}$.
    \item $d_x + I_x = (d_z + 2\,\diff_{I_z}) + I_x \subseteq (d_z+2\,\diff_{I_z}) + I_z = d_z + I_z \subseteq I_W \mod 2^{M_\mathbf{n}}$.
  \end{itemize}
  Since $x$ is $\mathbf{n}$-expandable with respect to $W$, there are $k_x\in \N, u_x\in\Sigma^{\END_x}, t_x\in\Sigma^{\DOM_x}$ that satisfy the statement in Definition ~\ref{def:relaxable}, where $\DOM_x = \DOM_\mathbf{n}(I_x,i_x,k_x,d_x)$ and $\STR_x,\END_x$ are defined similarly. Similarly, we can see there are $k_y\in \N, u_y\in\Sigma^{\END_y}, t_y\in\Sigma^{\DOM_y}$ that satisfy the statement in Definition ~\ref{def:relaxable}, where $\DOM_y = \DOM_\mathbf{n}(I_y,i_y,k_y,d_y)$ and $\STR_y,\END_y$ are defined similarly. Like the proof of part (I), without loss of generality we may assume $k_x = k_y$.
  
  Let $k_z\coloneqq k_x = k_y$, $\DOM_z = \DOM_\mathbf{n}(I_z, i_z, k_z, d_z)$, and $\STR_z,\END_z$ be defined similarly.
  
  \begin{itemize}
    \item Since $k_x = k_y$, we have $\max_\mathbf{n}\DOM_x = \max_\mathbf{n}\DOM_y$.
    \item $\DOM_x\cap\,\DOM_y =\emptyset$ and $\DOM_x\cup\,\DOM_y= \DOM_z$ (similar results hold for $\STR$ and $\END$).
    \item From the assumptions in the proposition, we know that $\Sigma_{s_x}\cap\Sigma_\mathbf{n}$ and $\Sigma_{s_y}\cap\Sigma_\mathbf{n}$ are disjoint.
  \end{itemize}
  So, by Lemma~\ref{lem:string-graph} part (2), we get $(s_x\vee s_y)\to_{\mathbf{n},(t_x\vee t_y)}(u_x\vee u_y)$. Note that $s_z=s_x\vee s_y$. Let $t_z\coloneqq t_x\vee t_y$ and $u_z\coloneqq u_x\vee u_y$. Since $\stree(u_x)\cong T_x$ and $\stree(u_y)\cong T_y$, we obviously have $\stree(u_z)\cong T_z$. This shows that $s_z$ is $(i_z,d_z)_\mathbf{n}$-relaxable, which completes the proof in case (I).
  
\begin{remark}
\label{rmk:left-right}
  Let $\stree(u_z)_{\text{left}}$ and $\stree(u_z)_{\text{right}}$ be the subtrees rooted at the left and the right children of $\troot_{\stree(u_z)}$. Let $(T_z)_{\text{left}}, (T_z)_{\text{right}}$ be defined similarly. We showed in the proof that $T_z\cong \stree(u_z)$, so, either
    $$(T_z)_{\text{left}}\cong \stree(u_z)_{\text{left}}, (T_z)_{\text{right}}\cong \stree(u_z)_{\text{right}},$$
    or
    $$(T_z)_{\text{left}}\cong \stree(u_z)_{\text{right}}, (T_z)_{\text{right}}\cong \stree(u_z)_{\text{left}}.$$
    If $d_z=\diff_{I_z}$, our construction guarantees that $$(T_z)_{\text{left}}\cong \stree(u_z)_{\text{right}}, (T_z)_{\text{right}}\cong \stree(u_z)_{\text{left}}.$$
  \end{remark}
  
  \vspace{0.2cm}
  {\em Case (II): $i_z$ belongs to $y$.}\\
  Pick some $i^{(0)}_z\in [0, \abs{I_z}]$ that belongs to $x$ (either $i^{(0)}_z=0$ or $i^{(0)}_z=1$ works), and let $d^{(0)}_z=\diff_{I_z}$. The proof in case (I) applied to $(i^{(0)}_z,d^{(0)}_z)_\mathbf{n}$ for $s_z$ gives us $k^{(0)}_z\in\N$, $u^{(0)}_z\in\Sigma^{\END^{(0)}_z}, t^{(0)}_z\in\Sigma^{\DOM^{(0)}_z}$, where $\DOM^{(0)}_z=\DOM_\mathbf{n}(I_z,i^{(0)}_z,k^{(0)}_z,d^{(0)}_z)$ and $\STR^{(0)}_z, \END^{(0)}_z$ are defined similarly, such that 
  \begin{equation}
  \label{eq:ec_parent_phase_0}
    \stree(u_z^{(0)})\cong T_z \text{ and } s_z\to_{\mathbf{n}, t_z^{(0)}} u_z^{(0)}.
  \end{equation}
  Moreover, using the notation from Remark ~\ref{rmk:left-right}, we have that
      $$\stree(u_z^{(0)})_{\text{left}}\cong (T_z)_{\text{right}}, \stree(u_z^{(0)})_{\text{right}}\cong (T_z)_{\text{left}}.$$
      So, for example, if $x$ is the left child of $z$, now $\stree(u_z^{(0)})_{\text{right}}$ is isomorphic to $T_x$.
  
  Let $s^{(1)}\in\Sigma^{[2^{M_\mathbf{n}}]}$ be almost equal to $\tstring(T)$ with a change: the restriction of $s^{(1)}$ to $I_z$ is equal to $u_z^{(0)}$. Let $T^{(1)} = \stree(s^{(1)})$. In other words, $T^{(1)}$ is equal to $T$ when $T_z$ is replaced by $\stree(u_z^{(0)})$. For any $v\in T$, let $v^{(1)}\in T^{(1)}$ be the corresponding vertex in $T^{(1)}$ to $v$, i.e.\ $I_{v^{(1)}} = I_v$, and let $W^{(1)}$ be the subset of vertices in $T^{(1)}$ corresponding to $W$. Note that the subtree of $T^{(1)}$ rooted at $x^{(1)}$, i.e.\ $T^{(1)}_{x^{(1)}}$, is isomorphic to $T_y$, and, similarly, $T^{(1)}_{y^{(1)}}$ is isomorphic to $T_x$. Moreoever, there is an isomorphism $\phi:T\to T^{(1)}$ such that $\phi(y)=x^{(1)}$, $\phi(x)=y^{(1)}$, and $\phi(v)=v^{(1)}$ for every $v\in T\setminus(T_x\cup T_y)$. Note that $I_{\phi(W)} = I_W = I_{W^{(1)}}$.
  
  Since $x$ is $\mathbf{n}$-expandable with respect to $W$, we get that $y^{(1)} = \phi(x)$ is $\mathbf{n}$-expandable with respect to $\phi(W)$, and since $I_{\phi(W)} = I_{W^{(1)}}$, also with respect to $W^{(1)}$. Similarly, $x^{(1)}$ is $\mathbf{n}$-contractible with respect to $W^{(1)}$. Since we assumed $i_z$ belongs to $y$, we also have that $i_z$ belongs to $y^{(1)}$. Now, we can apply case (I) to $x^{(1)}, y^{(1)}, z^{(1)}, W^{(1)}, T^{(1)}$ and $(i^{(1)}_z=i_z, d^{(1)}_z=d_z)_\mathbf{n}$ to get $k^{(1)}_z\in\N, u_z^{(1)}\in\Sigma^{\END^{(1)}_z}, t_z^{(1)}\in\Sigma^{\DOM^{(1)}_z}$, where $\DOM^{(1)}_z=\DOM_\mathbf{n}(I_{z^{(1)}},i^{(1)}_z,k^{(1)}_z,d^{(1)}_z)$ and $\STR^{(1)}_z,\END^{(1)}_z$ are defined similarly, such that 
  \begin{equation}
  \label{eq:ec_parent_phase_1}
    \stree(u_z^{(1)}) \cong \stree(u_z^{(0)}) \text{ and } u_z^{(0)}\to_{\mathbf{n},t_z^{(1)}} u_z^{(1)}.
  \end{equation}
  
  \begin{itemize}
    \item By ~\eqref{eq:ec_parent_phase_0} and ~\eqref{eq:ec_parent_phase_1} we have $\stree(u_z^{(1)}) \cong \stree(u_z^{(0)}) \cong T_z$.
    \item Let $k_z = k^{(0)}_z + 2 + k^{(1)}_z$, $\DOM_z = \DOM_{\mathbf{n}}(I_z,i_z,k_z,d_z)$, and $t_z = t_z^{(0)} \diamond_\mathbf{n} t_z^{(1)}$. Note that $t_z\in\Sigma^{\DOM_z}$. So, by ~\eqref{eq:ec_parent_phase_0}, ~\eqref{eq:ec_parent_phase_1}, and Lemma~\ref{lem:string-graph} part (1), we get that $s_z\to_{\mathbf{n},t_z}u^{(1)}_z$.
  \end{itemize}
  This shows that $s_z$ is $(i_z,d_z)_\mathbf{n}$-relaxable, which completes the proof of case (II). Hence part (2) is proved.

  \vspace{0.5cm}
  {\em Proof of (3).}
  If $x$ and $y$ are both $\mathbf{n}$-contractible with respect to $W$, we know the result from part (1). So, without loss of generality assume that $x$ is $\mathbf{n}$-super-contractible with respect to $W$ and $y$ is $\mathbf{n}$-expandable with respect to $W$. The proof in this case is very similar to that of part (2) and we do not repeat it here.
  
\end{proof}

\begin{proposition}
\label{prop:expandable-contractible-isolated}
    Let $\Sigma$ be a finite alphabet, $\mathbf{n}\in \N^{\Sigma_\mathbf{n}}$ be a frame on $\Sigma$, and $T$ be an $(\mathbf{n},d_T)$-uniform tree. If $z\in T$ is $\mathbf{n}$-isolated in $T$, we have the following.
  \begin{enumerate}
    \item $z$ is either $\mathbf{n}$-expandable or $\mathbf{n}$-contractible with respect to $\{\troot_T\}$.
    \item If a $\sigma\in\Sigma$ with either (i) $\sigma\in\Sigma_\mathbf{n}$ and $d_T(\sigma)>2^{-\mathbf{n}_\sigma}$, or (ii) $\sigma\notin\Sigma_\mathbf{n}$, appears in $T_z$, i.e.\ $\sigma\in\Sigma_{\tstring(z)}$, then $z$ is $\mathbf{n}$-expandable with respect to $\{\troot_T\}$.
    \item If a $\sigma\in\Sigma\setminus\Sigma_\mathbf{n}$ appears in $T_z$, and for any $\beta\in\Sigma_\mathbf{n}$ that appears in $T_z$ we have 
    $d_T(\beta)\in\{2^{-\mathbf{n}_\beta},2^{-(\mathbf{n}_\beta-1)}\}$,
    then $z$ is both $\mathbf{n}$-expandable and $\mathbf{n}$-super-contractible with respect to $\{\troot_T\}$.\\
    In particular, if $d_T\in \PBA_\mathbf{n}$ and $\irr(d_T)$ does not appear in $T_z$ and a $\sigma\in\Sigma\setminus\Sigma_\mathbf{n}$ appears in $T_z$, then $z$ is both $\mathbf{n}$-expandable and $\mathbf{n}$-super-contractible with respect to $\{\troot_T\}$.
  \end{enumerate}
\end{proposition}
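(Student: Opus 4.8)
The plan is to prove all three parts simultaneously by induction on the height $h=M_\mathbf{n}-\depth_z$ of the subtree $T_z$, using Proposition~\ref{prop:expandable-contractible-leaf} for the base case and Proposition~\ref{prop:expandable-contractible-parent} for the inductive step.

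\textbf{Base case.} Suppose $T_z$ is monochromatic, say with label $\sigma$. If $\sigma\notin\Sigma_\mathbf{n}$, Proposition~\ref{prop:expandable-contractible-leaf}(3) gives directly that $z$ is both $\mathbf{n}$-expandable and $\mathbf{n}$-super-contractible, which settles all three parts (part~(3)'s density condition is vacuous since no frame character appears in $T_z$). If $\sigma\in\Sigma_\mathbf{n}$, then $\mathbf{n}$-compatibility of $T$ forces every gap between consecutive $\sigma$'s to be at most $2^{\mathbf{n}_\sigma}$, so $c_T(\sigma)$ has depth $\mathbf{n}_\sigma-1$ or $\mathbf{n}_\sigma$; combining $\mathbf{n}$-isolation of $z$ with $(\mathbf{n},d_T)$-uniformity one shows $z=c_T(\sigma)$, so $\diff_{I_z}\in\{2^{\mathbf{n}_\sigma-1},2^{\mathbf{n}_\sigma}\}$ and Proposition~\ref{prop:expandable-contractible-leaf}(1)--(2) applies. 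In the depth-$\mathbf{n}_\sigma$ subcase one checks $d_T(\sigma)=2^{-\mathbf{n}_\sigma}$, so the hypothesis of part~(2) is not met; in the depth-$(\mathbf{n}_\sigma-1)$ subcase $\sigma$ is $\mathbf{n}$-expandable, which is exactly what part~(2) demands; and part~(3) is again vacuous.

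\textbf{Inductive step.} Now $T_z$ is not monochromatic, so $z$ has two children $x,y$. The key structural point, proved from $(\mathbf{n},d_T)$-uniformity and the laminar structure of the subtrees $T_v$, is a dichotomy: either $\Sigma_{\tstring(x)}\cap\Sigma_{\tstring(y)}\cap\Sigma_\mathbf{n}=\emptyset$, or this intersection is a single $\{\sigma\}\subseteq\Sigma_\mathbf{n}$ with $z$ the parent of $c_T(\sigma)$, one child (say $x$) equal to $c_T(\sigma)$, $T_x$ entirely labeled $\sigma$, $\depth_x=\mathbf{n}_\sigma$, and $2^{-\mathbf{n}_\sigma}<d_T(\sigma)<2^{-(\mathbf{n}_\sigma-1)}$. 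In the first (generic) alternative both $x$ and $y$ are $\mathbf{n}$-isolated, so the inductive hypothesis applies to each, and since $\Sigma_{\tstring(x)}\cap\Sigma_\mathbf{n}$ and $\Sigma_{\tstring(y)}\cap\Sigma_\mathbf{n}$ are disjoint, Proposition~\ref{prop:expandable-contractible-parent} carries the conclusions up to $z$: part~(1), because each child is $\mathbf{n}$-expandable or $\mathbf{n}$-contractible by inductive part~(1), so parts (1)--(2) of that proposition make $z$ $\mathbf{n}$-super-contractible or $\mathbf{n}$-expandable; part~(2), because the witnessing character lies in one child, making it $\mathbf{n}$-expandable by inductive part~(2), hence $z$ $\mathbf{n}$-expandable by Proposition~\ref{prop:expandable-contractible-parent}(2); part~(3), because the witnessing non-frame character makes one child both $\mathbf{n}$-expandable and $\mathbf{n}$-super-contractible by inductive part~(3), hence so is $z$ by Proposition~\ref{prop:expandable-contractible-parent}(2),(3). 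The ``in particular'' statement of part~(3) then follows at once, since $d_T\in\PBA_\mathbf{n}$ together with $\irr(d_T)\notin\Sigma_{\tstring(z)}$ forces every frame character in $T_z$ to have the required dyadic density.

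\textbf{The main obstacle} is the exceptional alternative, where $x=c_T(\sigma)$ is all-$\sigma$ and $\sigma$ also appears in $T_y$. Here $\sigma$ itself witnesses part~(2)'s hypothesis and part~(3)'s hypothesis fails (because $d_T(\sigma)$ is neither $2^{-\mathbf{n}_\sigma}$ nor $2^{-(\mathbf{n}_\sigma-1)}$), so it suffices to prove $z$ is $\mathbf{n}$-expandable with respect to $\{\troot_T\}$. Proposition~\ref{prop:expandable-contractible-parent} is unavailable since $\sigma\in\Sigma_{\tstring(x)}\cap\Sigma_{\tstring(y)}$, and $y$ by itself need not even be $\mathbf{n}$-compatible (its $\sigma$-gaps can exceed $2^{\mathbf{n}_\sigma}$). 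I would instead give a direct relaxability construction in the style of Proposition~\ref{prop:expandable-contractible-leaf}: carry the non-$\sigma$ part of $T_z$ — which lives entirely inside $T_y$ — along unchanged from one $2^{M_\mathbf{n}}$-block to the next as a rigid backbone, while letting the $\sigma$'s absorb the phase shift; because the all-$\sigma$ progression $I_x$ already places a $\sigma$ at least once every $2^{\mathbf{n}_\sigma}$ positions, any shift by $d$ with $2^{\mathbf{n}_\sigma-1}\leq d\leq 2^{\mathbf{n}_\sigma}$ keeps every $\sigma$-gap inside $[2^{\mathbf{n}_\sigma-1},2^{\mathbf{n}_\sigma}]$, and since the backbone is untouched the resulting string is tree-isomorphic to $T_z$. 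Making this precise — reconciling the seam with the $\DOM/\STR/\END$ bookkeeping and verifying local $\mathbf{n}$-compatibility of the overlay of the $\sigma$-string with the backbone at the shift — is where essentially all the technical effort lies.
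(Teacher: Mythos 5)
Your induction scheme, your base case, and your dichotomy for the inductive step all line up with the paper's argument; the only place the two proofs diverge is the exceptional case, and that is exactly where your sketch has a real gap. The ``rigid backbone'' does not work. When you relax $\tstring(z)$ with a phase shift $d>\diff_{I_z}$, the positions after the seam are translated by $d-\diff_{I_z}>0$; any frame character $\tau\in\Sigma_\mathbf{n}\cap\Sigma_{\tstring(y)}$ other than $\sigma$ that appears in the backbone then has, at the seam, a gap of (its usual gap) $+\;(d-\diff_{I_z})$, which exceeds $2^{\mathbf{n}_\tau}$ whenever the usual gap is already $2^{\mathbf{n}_\tau}$. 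So local $\mathbf{n}$-compatibility of $t$ fails. Worse, for $\diff_{I_z}<d<2\diff_{I_z}$ the set $\END=d+I_z \bmod 2^{M_\mathbf{n}}$ is a different residue class than $I_z$, so you cannot hold the backbone at its original positions at all: it has to move. You correctly observed that $y$ by itself need not be $\mathbf{n}$-compatible (the $\sigma$-gaps inside $T_y$ can exceed $2^{\mathbf{n}_\sigma}$), but your fix treats only the $\sigma$'s; the other frame characters inside $T_y$ need their own relaxability, which means you must use the inductive hypothesis on $y$, not sidestep it.

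The paper resolves this with a relabeling trick you might find instructive. Replace every $\sigma$ inside $T_y$ by a fresh character $\bar\sigma\notin\Sigma_{\bar{\mathbf{n}}}$ (keeping $\bar{\mathbf{n}}=\mathbf{n}$), obtaining $\bar T$. Now $\bar x$ and $\bar y$ are $\bar{\mathbf{n}}$-isolated in $\bar T$ with disjoint frame alphabets: $\bar y$ contains a non-frame character $\bar\sigma$, hence is $\bar{\mathbf{n}}$-expandable by the inductive hypothesis (part 2(ii)); $\bar x$ has label $\sigma$ at depth $\mathbf{n}_\sigma$, hence is $\bar{\mathbf{n}}$-contractible by Proposition~\ref{prop:expandable-contractible-leaf}(2); so Proposition~\ref{prop:expandable-contractible-parent}(2) makes $\bar z$ $\bar{\mathbf{n}}$-expandable. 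This produces $\bar t_z,\bar u_z$ witnessing $(i,d)_{\bar{\mathbf{n}}}$-relaxability. Converting $\bar\sigma$ back to $\sigma$ to get $t_z,u_z$, the $\sigma$-gaps in $t_z$ are bounded above by $2^{\mathbf{n}_\sigma}$ (inherited from $\bar t_z$, which respects $\sigma$'s constraint) and below by $\diff_{I_z}=2^{\mathbf{n}_\sigma-1}$ because $d\geq\diff_{I_z}$ forces every gap in $\DOM$ to be at least $\diff_{I_z}$; all other frame characters' gaps are unchanged. So $t_z$ is locally $\mathbf{n}$-compatible and $\stree(u_z)\cong T_z$. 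The key point you missed is that the ``backbone'' of $T_y$ must itself be relaxed (via the inductive hypothesis at $\bar y$), and the reason the $\sigma$'s in $T_y$ don't obstruct this is that they've been temporarily freed from the frame constraint by being renamed $\bar\sigma$.
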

\begin{proof}
  We prove this proposition by backward induction on the depth of $z$. If $z$ is a leaf, the proof is easy and similar to the proof of Proposition~\ref{prop:expandable-contractible-leaf}.\\
  Let $x$ and $y$ be $z$'s children. We know that the proposition holds for $x$ and $y$. Since $z$ is $\mathbf{n}$-isolated in $T$ and $T$ is an $(\mathbf{n},d_T)$-uniform tree, we know that either both $x$ and $y$ are $\mathbf{n}$-isolated in $T$ or one of $x$ and $y$ has a label. Let $s_z = \tstring(z), s_x = \tstring(x), s_y = \tstring(y)$.
  
  \vspace{0.5cm}
  {\em Case (I): $x$ and $y$ are both $\mathbf{n}$-isolated in $T$.}\\
  In this case the proposition follows easily from the induction hypothesis for $x$ and $y$ and Proposition~\ref{prop:expandable-contractible-parent}. Note that since $x$ and $y$ are both $\mathbf{n}$-isolated in $T$, we know that $\Sigma_{s_x}\cap\Sigma_\mathbf{n}$ and $\Sigma_{s_y}\cap\Sigma_\mathbf{n}$ are disjoint.
  \begin{enumerate}
    \item If either of $x$ or $y$ is $\mathbf{n}$-expandable with respect to $\{\troot_T\}$, then by Proposition~\ref{prop:expandable-contractible-parent}, so is $z$. If, on the other hand $x$ and $y$ are both not $\mathbf{n}$-expandable, then by the first part of the induction hypothesis, $x$ and $y$ are both $\mathbf{n}$-contractible, and so by Proposition~\ref{prop:expandable-contractible-parent}, $z$ is $\mathbf{n}$-contractible with respect to $\{\troot_T\}$.
    
    \item If a $\sigma\in\Sigma$ appears in $T_z$ with either (i) $\sigma\in\Sigma_\mathbf{n}$ and $d_T(\sigma)>2^{-\mathbf{n}_\sigma}$, or (ii) $\sigma\notin\Sigma_\mathbf{n}$, then $\sigma$ appears in either $T_x$ or $T_y$. So, by the second part of the induction hypothesis, at least one of $x$ and $y$ is $\mathbf{n}$-expandable with respect to $\{\troot_T\}$, and, by the first part of the induction hypothesis, the other one is either $\mathbf{n}$-expandable or $\mathbf{n}$-contractible. Hence, by Proposition~\ref{prop:expandable-contractible-parent}, we get that $z$ is $\mathbf{n}$-expandable with respect to $\{\troot_T\}$.
    
    \item If a $\sigma\in\Sigma\setminus\Sigma_\mathbf{n}$ appears in $T_z$, and for any $\beta\in\Sigma_\mathbf{n}$ that appears in $T_z$ we have
    $d_T(\beta)\in\{2^{-\mathbf{n}_\beta},2^{-(\mathbf{n}_\beta-1)}\}$,
    then at least one of $x$ and $y$ satisfies the conditions for the third part of the proposition. So at least one of $x$ and $y$ is both $\mathbf{n}$-expandable and $\mathbf{n}$-super-contractible with respect to $\{\troot_T\}$ and the other one is either $\mathbf{n}$-expandable or $\mathbf{n}$-contractible by the first part of the induction hypothesis. So, by Proposition~\ref{prop:expandable-contractible-parent}, we get that $z$ is both $\mathbf{n}$-expandable and $\mathbf{n}$-super-contractible with respect to $\{\troot_T\}$.
  \end{enumerate}
  
  \vspace{0.5cm}
  {\em Case (II): either $x$ or $y$ has a label.}\\
  Assume that $x$ has a label. If $z$ has a label, the proposition follows easily from Proposition~\ref{prop:expandable-contractible-leaf} and the fact that $z$ is $\mathbf{n}$-isolated in $T$. So, assume that $z$ does not have a label.\\
  We can further assume that $x$ and $y$ are not $\mathbf{n}$-isolated in $T$, otherwise the proposition follows from case (I). So, there is a character $\sigma\in\Sigma_\mathbf{n}$ that appears in both $T_x$ and $T_y$. Since $\sigma$ appears in $T_x$ and $x$ has a label, $x$ has label $\sigma$. Note that since (i) $z$ does not have a label, (ii) $x$ has label $\sigma$, and (iii) $\sigma$ appears in $T_y$, we have $2^{-\mathbf{n}_\sigma} < d_T(\sigma) < 2^{-(\mathbf{n}_\sigma-1)}$.
  
  We will show the second part of the proposition, which will give us the first part for free. To show that $z$ is $\mathbf{n}$-expandable with respect to $\{\troot_T\}$, we need to show that $s_z$ is $(i,d)_\mathbf{n}$-relaxable for any $i\in[0,\abs{I_z}]$ and $\diff_{I_z}\leq d \leq 2\ \diff_{I_z}$. Fix $i\in[0,\abs{I_z}]$ and $\diff_{I_z}\leq d \leq 2\ \diff_{I_z}$.
  
  Let $\bar{\sigma}\neq\,\sim$ be a character not in $\Sigma$, $\bar{\Sigma} = \Sigma\cup\{\bar{\sigma}\}$, $\bar{\Sigma}_{\bar{\mathbf{n}}} = \Sigma_\mathbf{n}$ and $\bar{\mathbf{n}}\in\N^{\bar{\Sigma}_{\bar{\mathbf{n}}}}$ be equal to $\mathbf{n}$. Let $\bar{T}$ be equal to $T$ except that all the labels $\sigma$ in $\bar{T}_{\bar{y}}$ are replaced by $\bar{\sigma}$, where $\bar{v}\in \bar{T}$ is the corresponding vertex to $v\in T$ for any $v\in T$, i.e.\ $I_{\bar{v}}=I_v$.
  
  We can easily see that $\bar{z},\bar{x},\bar{y}$ are $\bar{\mathbf{n}}$-isolated in $\bar{T}$, which implies $\Sigma_{\tstring(\bar{x})}\cap\Sigma_\mathbf{n}$ and $\Sigma_{\tstring(\bar{y})}\cap\Sigma_\mathbf{n}$ are disjoint, $\bar{y}\in \bar{T}$ is $\bar{\mathbf{n}}$-expandable with respect to $\{\troot_{\bar{T}}\}$ (by the second part of the induction hypothesis for $\bar{y}$), and $\bar{x}\in \bar{T}$ is $\mathbf{n}$-contractible with respect to $\{\troot_{\bar{T}}\}$ (by Proposition ~\ref{prop:expandable-contractible-leaf}). So, by Propositoin ~\ref{prop:expandable-contractible-parent}, $\bar{z}\in \bar{T}$ is $\bar{\mathbf{n}}$-expandable with respect to $\{\troot_{\bar{T}}\}$, which implies that $\tstring(\bar{z})$ is $(i,d)_{\bar{\mathbf{n}}}$-relaxable. So, there are $k\in\N, \bar{u}_z\in\bar{\Sigma}^{\END}, \bar{t}_z\in\bar{\Sigma}^{\DOM}$, where $\DOM =\DOM_{\bar{\mathbf{n}}}(I_{\bar{z}}, i, k, d) = \DOM_{\mathbf{n}}(I_z, i, k, d)$ and $\STR,\END$ are defined similarly, such that
  \begin{itemize}
    \item $\stree(\bar{u}_z) \cong \stree(\tstring(\bar{z})) = \bar{T}_{\bar{z}}$, and
    \item $\tstring(\bar{z})\to_{\bar{\mathbf{n}}, \bar{t}_z} \bar{u}_z$,
  \end{itemize}
  
  Let $t_z$ and $u_z$ be equal to $\bar{t}_z$ and $\bar{u}_z$ except that all the $\bar{\sigma}$'s are replaced by $\sigma$. $\stree(u_z)$ is equal to $\stree(\bar{u}_z)$ when all labels $\bar{\sigma}$ are replaced by $\sigma$, which is isomorphic to $\stree(\tstring(\bar{z}))$ when all labels $\bar{\sigma}$ are replaced by $\sigma$, which is equal to $\stree(s_z) = T_z$. So, $\stree(u_z)\cong T_z$. We also have the following which shows $s_z\to_{\mathbf{n}, t_z} u_z$.
  \begin{itemize}
    \item
      $
      \Sigma_{t_z} = \Sigma_{\bar{t}_z} \setminus \{\bar{\sigma}\} \cup \{\sigma\}
      = \Sigma_{\tstring(\bar{z})} \setminus \{\bar{\sigma}\} \cup \{\sigma\}
      = \Sigma_{s_z}
      $.\\
      Similarly, we get $\Sigma_{u_z} = \Sigma_{s_z}$.
    
    \item Since $d\geq \diff_{I_z}$, all the gaps between indices of $\DOM$ are at least $\diff_{I_z}$. On the other hand, all the gaps between appearances of $\sigma$ in $\bar{t}_z$ are at most $2^{\mathbf{n}_\sigma}=2\ \diff_{I_z}$. So, in $t_z$ all the gaps between appearances of $\sigma$ are between $\diff_{I_z}$ and $2\ \diff_{I_z}$. The gaps for the other characters of $\Sigma_{t_z}\cap\Sigma_\mathbf{n}$ are the same in $t_z$ and $\bar{t}_z$. So, $t_z$ is locally $\mathbf{n}$-compatible.
    
    \item $t_z[\mathbf{n}, +1]$ is equal to $\bar{t}_z[\mathbf{n}, +1] = \tstring(\bar{z})$ when all $\bar{\sigma}$'s are replaced by $\sigma$, which is equal to $s_z$.
    \item $t_z[\mathbf{n}, -1]$ is equal to $\bar{t}_z[\mathbf{n}, -1] = \bar{u}_z$ when all $\bar{\sigma}$'s are replaced by $\sigma$, which is equal to $u_z$.
  \end{itemize}
  This shows that $s_z$ is $(i,d)_\mathbf{n}$-relaxable. So, $z\in T$ is $\mathbf{n}$-expandable with respect to $\{\troot_T\}$. Note that in this case $z$ cannot satisfy the conditions for the third part of the proposition.

\end{proof}

\begin{definition}[Tree connections]
\label{def:adjacent-trees}
  Let $\Sigma$ be a finite alphabet and $\mathbf{n}\in \N^{\Sigma_\mathbf{n}}$ be a frame on $\Sigma$.
  \begin{enumerate}
    \item Let $s, s'\in \Sigma^{[2^{M_\mathbf{n}}]}$ be $\mathbf{n}$-compatible strings. We write $s\to_\mathbf{n} s'$ if there are $k\in\N, t_{s,s'}\in \Sigma^{[k\,2^{M_\mathbf{n}}]}$ such that $s\to_{\mathbf{n}, t_{s,s'}} s'$.
    \item Let $T$ and $T'$ be $\mathbf{n}$-compatible trees and $s=\tstring(T)$ and $s'=\tstring(T')$, we write $T\to_\mathbf{n} T'$ iff $s\to_\mathbf{n} s'$.
  \end{enumerate}
  In this case
  \begin{enumerate}
    \item Any such $t_{s,s'}$ is called a {\em connecting string} for $(T,T')_\mathbf{n}$.
    \item $\IND_\mathbf{n}(t_{s,s'}) \coloneqq \{i\in\domain(t_{s,s'})\;|\; t_{s,s'}(i)\neq t_{s,s'}[\mathbf{n},+1]^{\wedge_\mathbf{n}k}(i)\}$. We call $\IND_\mathbf{n}(t_{s,s'})$ the active indices for $(t_{s,s'})_\mathbf{n}$.\\
    Note that since $t_{s,s'}[\mathbf{n},+1] = s$, active indices for $(t_{s,s'})_\mathbf{n}$ is in fact equal to $\{i\in\domain(t_{s,s'})\;|\; s^{\wedge_\mathbf{n}k}(i)\neq t_{s,s'}(i)\}$
    \item For any vertex $v\in T$ define\\
    $\Sigma^{\text{unique}}_v = \{\sigma\in\Sigma_{\tstring(v)}\ | \ \sigma \text{ does not appear in } T\setminus T_v\}$, and\\
    $\IND_\mathbf{n}(t_{s,s'},v)\coloneqq \{i\in\domain(t_{s,s'})\;|\; t_{s,s'}(i)\in \Sigma^{\text{unique}}_v\}$.
  \end{enumerate}
\end{definition}

\begin{definition}[Permutations]
\label{def:permutation}
  Let $\Sigma$ be a finite alphabet, $\mathbf{n}\in \N^{\Sigma_\mathbf{n}}$ be a frame on $\Sigma$, and $T$ be an $\mathbf{n}$-compatible tree. Let $v_1,\ldots,v_m$ be $\mathbf{n}$-isolated vertices in $T$ with the same depth, and let $\pi$ be a permutation for $V=\{v_1,\ldots,v_m\}$. We say that we can apply $\pi$ to $(T,v_1,\ldots,v_m)_\mathbf{n}$ if there exists an $\mathbf{n}$-compatible tree $T'$ such that
  \begin{itemize}
    \item $T'_{v'}\cong T_{\pi^{-1}(v)}$ for $v\in V$, where $w'\in T'$ is the corresponding vertex to $w$ for any $w\in T$, i.e.\ $I_{w'} = I_w$, and
    \item $T\to_\mathbf{n} T'$ and for some connecting string $t_{T,T'}$ for $(T,T')_\mathbf{n}$ we have $\IND_\mathbf{n}(t_{T,T'}) \subseteq \cup_{v\in V} I_v \mod 2^{M_\mathbf{n}}$ and $\IND_\mathbf{n}(t_{T,T'}, v) \subseteq I_v\cup I_{\pi(v)} \mod 2^{M_\mathbf{n}}$ for all $v\in V$.
  \end{itemize}
  In this case, any such $T'$ is called a {\em result of $\pi$ on $(T,v_1,\ldots,v_m)_\mathbf{n}$}.
\end{definition}

Note that if $T$ is an $(\mathbf{n}, \mathbf{p})$-uniform tree and $T'$ is a result of a permutation $\pi$ on $(T,v_1,\ldots,v_m)_\mathbf{n}$, then $T'$ is also an $(\mathbf{n}, \mathbf{p})$-uniform tree.

\begin{lemma}
\label{lem:perm}
  Let $\Sigma$ be a finite alphabet, $\mathbf{n}\in \N^{\Sigma_\mathbf{n}}$ be a frame on $\Sigma$, and $T$ be an $(\mathbf{n},d_T)$-uniform tree. Let $v_1,\ldots,v_m$ be $\mathbf{n}$-isolated vertices in $T$ with the same depth, and let $\pi$ be a permutation of $V=\{v_1,\ldots,v_m\}$.\\
  Assume for $v\in V$ there are $i_v\in[0,\abs{I_v}]$ and $d_v\in\N$ such that
  \begin{itemize}
    \item $\tstring(v)$ is $(i_v,d_v)_\mathbf{n}$-relaxable for all $v\in V$,
    \item $d_v+I_v = I_{\pi(v)} \mod 2^{M_\mathbf{n}}$ for all $v\in V$, and
    \item $A_{v_1},\ldots,A_{v_m},A'$ is a partition of $A$, where
    \begin{align*}
      & A_v = \DOM_\mathbf{n}(I_v, i_v, 1, d_v) \text{ for } v\in V,\\
      & A = \DOM_\mathbf{n}([2^{M_\mathbf{n}}], 0, 1, 1) = [4\cdot 2^{M_\mathbf{n}}], \text{ and }\\
      & A' = \{l\in A\;|\; l\notin \cup_{v\in V} I_v \mod 2^{M_\mathbf{n}}\}.
    \end{align*}
  \end{itemize}
  Then we can apply $\pi$ to $(T,v_1,\ldots,v_m)_\mathbf{n}$.
\end{lemma}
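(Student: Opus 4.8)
The plan is to extract, for each $v\in V$, a local connecting string realizing the relocation of the subtree $T_v$ from the block $I_v$ to the block $I_{\pi(v)}$, and then to glue all these local pieces together — filling the remaining positions with a repetition of $\tstring(T)$ — into a single connecting string between $T$ and the permuted tree $T'$. Concretely: for each $v\in V$, since $\tstring(v)$ is $(i_v,d_v)_\mathbf{n}$-relaxable, Definition~\ref{def:relaxable} furnishes $k_v\in\N$, strings $u_v\in\Sigma^{\END_v}$ and $t_v\in\Sigma^{\DOM_v}$ with $\DOM_v=\DOM_\mathbf{n}(I_v,i_v,k_v,d_v)$ and $\END_v=\END_\mathbf{n}(I_v,i_v,k_v,d_v)$, such that $\tstring(v)\to_{\mathbf{n},t_v}u_v$ and $\stree(u_v)\cong\stree(\tstring(v))=T_v$. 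By the trick from the proof of Proposition~\ref{prop:expandable-contractible-parent} — replacing $t_v$ by $\tstring(v)\wedge_\mathbf{n}t_v$ raises $k_v$ by one while keeping the arrow and the tree isomorphism — I may assume all the $k_v$ equal a common $k$. I then record two facts modulo $2^{M_\mathbf{n}}$: (i) since $d_v+I_v=I_{\pi(v)}\bmod 2^{M_\mathbf{n}}$, one has $\DOM_v\bmod 2^{M_\mathbf{n}}\subseteq I_v\cup I_{\pi(v)}$, and in fact $\END_v$ is exactly $I_{\pi(v)}$ in its base copy; (ii) $\DOM_\mathbf{n}(I_v,i_v,k,d_v)$ arises from $A_v=\DOM_\mathbf{n}(I_v,i_v,1,d_v)$ by prepending $k-1$ full copies of $I_v$ and translating $A_v$ up by $(k-1)2^{M_\mathbf{n}}$. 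Combining (ii) with the partition hypothesis $A=A_{v_1}\sqcup\cdots\sqcup A_{v_m}\sqcup A'$ (where $A=[4\cdot 2^{M_\mathbf{n}}]$ and $A'$ collects the positions of residue outside $\bigcup_v I_v$) and with the pairwise disjointness of the $I_{v_i}$ (they are distinct vertices of equal depth), I conclude that $\DOM_{v_1},\ldots,\DOM_{v_m}$ are pairwise disjoint and that their union, together with the set $P$ of positions in $[(k+3)2^{M_\mathbf{n}}]$ of residue outside $\bigcup_v I_v$, exhausts $[(k+3)2^{M_\mathbf{n}}]$.

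With this in hand I would set $t:=\bigl(\bigvee_{v\in V}t_v\bigr)\vee\bigl(\tstring(T)^{\wedge_\mathbf{n}(k+3)}\big|_P\bigr)$ on $[(k+3)2^{M_\mathbf{n}}]$ and define $T'$ through $\tstring(T'):=\bigl(\bigvee_{v\in V}u_v\bigr)\vee\bigl(\tstring(T)\big|_{[2^{M_\mathbf{n}}]\setminus\bigcup_v I_v}\bigr)$, which is well defined because the $\END_v=I_{\pi(v)}$ are disjoint and cover $\bigcup_v I_v$. Then $t[\mathbf{n},+1]=\tstring(T)$ and $t[\mathbf{n},-1]=\tstring(T')$ by reading off the base and top copies, and $\stree$ of the restriction of $\tstring(T')$ to $I_{\pi(v)}$ is $\stree(u_v)\cong T_v$, i.e.\ $T'_{(\pi(v))'}\cong T_v$, which after renaming reads $T'_{v'}\cong T_{\pi^{-1}(v)}$. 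Local $\mathbf{n}$-compatibility of $t$ holds because, by $\mathbf{n}$-isolation, each $\sigma\in\Sigma_\mathbf{n}$ appearing in some $T_{v_i}$ occurs in $t$ only within $\DOM_{v_i}$ — where $t$ agrees with the locally $\mathbf{n}$-compatible $t_{v_i}$ — while each $\sigma$ appearing in no $T_{v_i}$ has exactly the same occurrences in $t$ as in $\tstring(T)^{\wedge_\mathbf{n}(k+3)}$, which is $\mathbf{n}$-compatible since $T$ is $(\mathbf{n},d_T)$-uniform (cf.\ Remark~\ref{rmk:n-repeatable}). Together with $\Sigma_t=\Sigma_{\tstring(T)}=\Sigma_{\tstring(T')}$ this gives $\tstring(T)\to_{\mathbf{n},t}\tstring(T')$, and $T'$ is itself $\mathbf{n}$-compatible because permuting $\mathbf{n}$-isolated equal-depth subtrees preserves $(\mathbf{n},d_T)$-uniformity; hence $T\to_\mathbf{n}T'$. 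Finally $\IND_\mathbf{n}(t)\bmod 2^{M_\mathbf{n}}\subseteq\bigcup_v\DOM_v\bmod 2^{M_\mathbf{n}}\subseteq\bigcup_v(I_v\cup I_{\pi(v)})=\bigcup_v I_v$, and since a character of $\Sigma^{\text{unique}}_v$ lies in $\tstring(w)$ for no $w\neq v$ and does not occur on $P$, it appears in $t$ only inside $\DOM_v$, whence $\IND_\mathbf{n}(t,v)\bmod 2^{M_\mathbf{n}}\subseteq\DOM_v\bmod 2^{M_\mathbf{n}}\subseteq I_v\cup I_{\pi(v)}$; these are exactly the remaining conditions of Definition~\ref{def:permutation}.

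The main difficulty is the combinatorial bookkeeping in the first step: identifying $\DOM_\mathbf{n}(I_v,i_v,k,d_v)$ modulo $2^{M_\mathbf{n}}$ and verifying that, after the $k_v$ have been equalized to a common $k$, the sets $\DOM_{v_i}$ still tile $[(k+3)2^{M_\mathbf{n}}]$ jointly with the untouched positions $P$ — this is where the partition hypothesis on $A_{v_1},\ldots,A_{v_m},A'$ enters, through the observation that raising $k$ merely prepends full copies of $\bigcup_v I_v$. Once the tiling is established, the local $\mathbf{n}$-compatibility of the glued string across the seams between the $t_v$-blocks and $P$, together with the two $\IND$-containments, is a routine consequence of $\mathbf{n}$-isolation, which pins each relevant $\Sigma_\mathbf{n}$-character inside a single block.
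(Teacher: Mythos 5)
Your proof is correct and follows essentially the same route as the paper's: extract local relocation strings $t_v,u_v$ for each $v\in V$, equalize the $k_v$ by prepending copies of $\tstring(v)$, join them with the replicated untouched part $t'=(s')^{\wedge_\mathbf{n}(k+3)}$, and verify the arrow, isomorphism, and two $\IND$-containments. The only cosmetic difference is that you verify local $\mathbf{n}$-compatibility of the glued string directly by a case split on where each $\Sigma_\mathbf{n}$-character lives, whereas the paper packages that same check into an invocation of Lemma~\ref{lem:string-graph}~part~(2).
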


\begin{proof}
  Let $s_v = \tstring(v)$ for $v\in V$ and let $s = \tstring(T)$.
  
  For $v\in V$, since $s_v$ is $(i_v,d_v)_\mathbf{n}$-relaxable, there are $k_v\in\N$, $u_v\in\Sigma^{\END_v}$, $t_v\in\Sigma^{\DOM_v}$, where $\DOM_v = \DOM_\mathbf{n}(I_v, i_v, k_v, d_v)$ and $\END_v$ is defined similarly, such that $s_v\to_{\mathbf{n},t_v}u_v$ and $\stree(u_v)\cong \stree(s_v) = T_v$. Note that $\END_v = I_{\pi(v)}$.
  
  Without loss of generality, we may assume $k_{v_1} = \cdots = k_{v_m}$. The reason for this is that increasing $k_v$ does not affect existence of such $u_v$ and $t_v$: if $u_v$ and $t_v$ work for $k_v$, then $u_v$ and $s_v\wedge_\mathbf{n} t_v$ work for $k_v+1$. So, let $k \coloneqq k_{v_1} = \cdots = k_{v_m}$.
  
  Let 
  \begin{align*}
    &\DOM = [(k+3)2^{M_\mathbf{n}}],\\
    &\STR = \END = [2^{M_\mathbf{n}}],\\
    &\DOM'=\{l\in\DOM\ | \ l \notin \cup_{v\in V} I_v \mod 2^{M_\mathbf{n}}\},\\
    &\STR' = \END' = [2^{M_\mathbf{n}}]\setminus(\cup_{v\in V} I_v).
  \end{align*}

  Let $s'=u'\in\Sigma^{\STR'}$ be equal to the restriction of $s$ to $\STR'$, and $t' = (s')^{\wedge_\mathbf{n}(k+3)}\in \Sigma^{\DOM'}$. Since $T$ is $(\mathbf{n},d_T)$-uniform and $v_1,\ldots,v_k$ are $\mathbf{n}$-isolated, we can easily see that $s'\to_{\mathbf{n},t'}u'$.
    
  So, we have $s_v\to_{\mathbf{n}, t_v}u_v$ for $v\in V$ and $s'\to_{\mathbf{n},t'}s'$, and
  \begin{enumerate}
    \item $\max_\mathbf{n}\DOM_{v_1} = \cdots = \max_\mathbf{n}\DOM_{v_m} = \max_\mathbf{n}\DOM' = k+3$,
    \item since $A_{v_1},\ldots,A_{v_m},A'$ is a partition of $A$, we have that 
    $$\DOM_{v_1},\ldots,\DOM_{v_m},\DOM'$$
    is a partition of $\DOM$, and the same holds for $\STR$ and $\END$, and 
    \item since $T$ is $(\mathbf{n},d_T)$-uniform and $v_1,\ldots,v_m$ are $\mathbf{n}$-isolated,
    $$\Sigma_{s_{v_1}}\cap\Sigma_\mathbf{n}, \ldots, \Sigma_{s_{v_m}}\cap\Sigma_\mathbf{n}, \Sigma_{s'}\cap\Sigma_\mathbf{n}$$
    are disjoint.
  \end{enumerate}
  So, by Lemma~\ref{lem:string-graph} part (2), and since $s = s_{v_1}\vee\cdots\vee s_{v_m}\vee s'$, we get
  \begin{equation}
  \label{eq:perm_graph}
    s\to_{\mathbf{n},t} u,
  \end{equation}
  where $t\coloneqq t_{v_1}\vee\cdots\vee t_{v_m}\vee t'$ and $u\coloneqq u_{v_1}\vee\cdots\vee u_{v_m}\vee u'$. Since $\DOM_{v_1},\ldots,\DOM_{v_m},\DOM'$ is a partition of $\DOM$, we have $t\in\Sigma^{\DOM}$ and $u\in\Sigma^{\END}$.

  Let $T' = \stree(u)$. For any vertex $w\in T$, let $w'$ be its corresponding vertex in $T'$. For $v\in V$, we know that $\tstring(T'_{v'})$ is the restriction of $u$ to $I_v$, and since $I_v = \END_{\pi^{-1}(v)}$, this is equal to the restriction of $u$ to $\END_{\pi^{-1}(v)}$, which is equal to $u_{\pi^{-1}(v)}$. So,
  \begin{equation}
  \label{eq:perm_tree}
    T'_{v'} = \stree(u_{\pi^{-1}(v)}) \cong T_{\pi^{-1}(v)}.
  \end{equation}
  
  By ~\eqref{eq:perm_graph} and ~\eqref{eq:perm_tree}, to complete the proof of this lemma, we just need to show that
  \begin{itemize}
    \item $\IND_\mathbf{n}(t) \subseteq \cup_{v\in V} I_v \mod 2^{M_\mathbf{n}}$, and
    \item $\IND_\mathbf{n}(t, v) \subseteq I_v\cup I_{\pi(v)} \mod 2^{M_\mathbf{n}}$ for $v\in V$.
  \end{itemize}
  
  If $l\notin \cup_{v\in V} I_v\mod 2^{M_\mathbf{n}}$, we have $t(l) = t'(l) = s(l')$ for $l'\in [2^{M_\mathbf{n}}]$ with $l' = l \mod 2^{M_\mathbf{n}}$. So, the first statement holds.
  
  To show the second statement, fix $v\in V$. Let $\sigma\in\Sigma^{\text{uniqe}}_v$. By definition, $\sigma\notin \Sigma_{s_w}$ for all $w\in V\setminus\{v\}$ and also $\sigma\notin \Sigma_{s'}$. Since $\Sigma_{t_w} = \Sigma_{s_w}$ and $\Sigma_{t'} = \Sigma_{s'}$, we also get that $\sigma\notin \Sigma_{t_w}$ for all $w\in V\setminus\{v\}$ and also $\sigma\notin \Sigma_{t'}$. This shows that $t(l) = \sigma$ only if $l\in\DOM_v$. But $\DOM_v\subseteq I_v\cup(d_v + I_v) = I_v\cup I_{\pi(v)}\mod 2^{M_\mathbf{n}}$. This completes the proof.
\end{proof}

\begin{proposition}
\label{prop:2-swap}
  Let $\Sigma$ be a finite alphabet, $\mathbf{n}\in \N^{\Sigma_\mathbf{n}}$ be a frame on $\Sigma$, and $T$ be an $(\mathbf{n},d_T)$-uniform tree.
  
  Let $x, y\in T$ have the same depth. Assume that $x,y$ are $\mathbf{n}$-isolated in $T$, $x$ is $\mathbf{n}$-expandable with respect to $\{x,y\}$, $y$ is $\mathbf{n}$-contractible with respect to $\{x,y\}$, and $d(x\to y) \leq 1/2\ \diff_{I_x}$. Then we can swap $x$ and $y$, meaning that we can apply the permutation $\pi$ with $\pi(x)=y, \pi(y)=x$ to $(T,x,y)_\mathbf{n}$.
\end{proposition}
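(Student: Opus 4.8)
We may assume $x\neq y$, since otherwise there is nothing to prove. The plan is to reduce everything to Lemma~\ref{lem:perm}, applied to $V=\{x,y\}$ and the transposition $\pi$ exchanging $x$ and $y$. Since $x,y$ have the same depth, $\diff_{I_x}=\diff_{I_y}=:D$; write $L=2^{M_\mathbf{n}}/D=\abs{I_x}=\abs{I_y}$, $a_x=\min I_x$, $a_y=\min I_y$, and $d^{\ast}=d(x\to y)$, so that $d^{\ast}\leq D/2$ by hypothesis and $d^{\ast}+I_x=I_y\mod 2^{M_\mathbf{n}}$ by definition; in particular $d^{\ast}+a_x\equiv a_y\pmod D$, so $m\coloneqq(d^{\ast}+a_x-a_y)/D$ is an integer, which lies in $\{0,1\}$ because $a_x,a_y\in[1,D]$ and $0\leq d^{\ast}<D$. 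I would apply Lemma~\ref{lem:perm} with the data
$$d_x=d^{\ast}+D,\qquad d_y=D-d^{\ast},\qquad i_x=0,\qquad i_y=m,$$
and the task is then to check its three hypotheses.

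First, the relaxability and coset conditions. I would observe that $d_x+I_x=d^{\ast}+I_x=I_y\subseteq I_{\{x,y\}}\mod 2^{M_\mathbf{n}}$ and $\diff_{I_x}=D\leq d_x\leq\tfrac32 D\leq 2\diff_{I_x}$, so that $\mathbf{n}$-expandability of $x$ with respect to $\{x,y\}$ yields that $\tstring(x)$ is $(i_x,d_x)_\mathbf{n}$-relaxable; symmetrically $d_y+I_y=I_x\subseteq I_{\{x,y\}}\mod 2^{M_\mathbf{n}}$ and $\tfrac12\diff_{I_y}=D/2\leq d_y=D-d^{\ast}\leq D=\diff_{I_y}$ — the lower bound being exactly the place where the assumption $d(x\to y)\leq\tfrac12\diff_{I_x}$ enters — so $\mathbf{n}$-contractibility of $y$ with respect to $\{x,y\}$ gives that $\tstring(y)$ is $(i_y,d_y)_\mathbf{n}$-relaxable. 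The identities $d_x+I_x=I_y=I_{\pi(x)}$ and $d_y+I_y=I_x=I_{\pi(y)}$ are exactly the coset condition of Lemma~\ref{lem:perm}.

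The hard part is the partition hypothesis: with $A=[4\cdot 2^{M_\mathbf{n}}]$, $A_x=\DOM_\mathbf{n}(I_x,i_x,1,d_x)$, $A_y=\DOM_\mathbf{n}(I_y,i_y,1,d_y)$, and $A'=\{\,l\in A\mid l\notin I_x\cup I_y\mod 2^{M_\mathbf{n}}\,\}$, one must verify $A=A_x\sqcup A_y\sqcup A'$. My approach is to unwind $\DOM^{\infty}_\mathbf{n}$: $A_x$ is an initial arithmetic-progression block of length $L+i_x$ lying in the coset of $I_x$, followed — after one jump of size $d_x$ — by a further arithmetic progression of common difference $D$ lying in the coset of $I_y$, and $A_y$ is the same with $I_x,I_y$ interchanged. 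Indexing the elements of $A$ in the coset of $I_x$ as $a_x+kD$ and using $d_x=d^{\ast}+D$ together with $d^{\ast}+a_x=a_y+mD$, one finds that the initial block of $A_x$ occupies $k=0,\dots,L+i_x-1$ while the tail block of $A_y$ occupies $k=L+i_y-m,\dots,4L-1$; with $i_x=0$ and $i_y=m$ these are the complementary ranges $0,\dots,L-1$ and $L,\dots,4L-1$, covering all $4L$ indices of that coset present in $A$. The mirror computation in the coset of $I_y$ does the same for the initial block of $A_y$ and the tail block of $A_x$, and $A'$ is by definition the rest of $A$, so $A=A_x\sqcup A_y\sqcup A'$. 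Lemma~\ref{lem:perm} then gives that we may apply $\pi$ to $(T,x,y)_\mathbf{n}$, which is the claim. I expect the only genuine obstacle to be this last bookkeeping — showing that the two ``jump'' constructions interlock exactly — the choices $d_x,d_y,i_x,i_y$ being essentially forced: the relaxation windows $[\diff_{I_x},2\diff_{I_x}]$ and $[\tfrac12\diff_{I_y},\diff_{I_y}]$ pin down $d_x$ and $d_y$, and then the partition condition pins down $i_x$ and $i_y$.
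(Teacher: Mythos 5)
Your proof is correct and takes the same route as the paper: reduce to Lemma~\ref{lem:perm} with the choices $d_x=\diff_{I_x}+d(x\to y)$, $i_x=0$, $d_y=d(y\to x)$, $i_y=\delta_{\min I_y<\min I_x}$ (your $D-d^{\ast}$ and $m$ are exactly $d(y\to x)$ and this indicator). You simply spell out the partition bookkeeping that the paper dismisses as ``straightforward to see.''
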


\begin{proof}
  Let
  \begin{align*}
    & d_x = \diff_{I_x} + d(x\to y), & & i_x = 0,\\
    & d_y = d(y\to x), & & i_y = \delta_{\min I_y < \min I_x}.
  \end{align*}
  
  \begin{itemize}
    \item Note that $\diff_{I_x} \leq d_x \leq 2 \diff_{I_x}$ and $d_x + I_x \subseteq I_{\{x,y\}} \mod 2^{M_\mathbf{n}}$. So, since $x$ is $\mathbf{n}$-expandable with respect to $\{x,y\}$, $s_x$ is $(i_x, d_x)_\mathbf{n}$-relaxable. Similarly, we can see that $s_y$ is $(i_y, d_y)_\mathbf{n}$-relaxable.
    
    \item Note that $d_x + I_x = I_y, d_y + I_y = I_x \mod 2^{M_\mathbf{n}}$.
    
    \item Let $A_x = \DOM_\mathbf{n}(I_x, i_x, 1, d_x), A_y = \DOM_\mathbf{n}(I_y, i_y, 1, d_y), A = [4\cdot 2^{M_\mathbf{n}}], A' = \{l\in A\;|\; l\notin I_x\cup I_y \mod 2^{M_\mathbf{n}}\}$. It is straightforward to see that $A_x,A_y,A'$ is a partition of $A$.
  \end{itemize}
  So, Lemma~\ref{lem:perm} shows that we can apply the permutation $\pi$ with $\pi(x)=y,\pi(y)=x$ to $(T,x,y)_\mathbf{n}$.
\end{proof}

\begin{proposition}
\label{prop:4-perm}
  Let $\Sigma$ be a finite alphabet, $\mathbf{n}\in \N^{\Sigma_\mathbf{n}}$ be a frame on $\Sigma$, and $T$ be an $(\mathbf{n},d_T)$-uniform tree.
  
  Let $x\neq y \in T$ have the same depth, $x_1, x_2$ be the two children of $x$, and $y_1, y_2$ be the two children of $y$. Assume that $x_1,x_2,y_1,y_2$ are $\mathbf{n}$-isolated in $T$, $x_1$ is $\mathbf{n}$-expandable with respect to $\{x,y\}$, $y_1$ is $\mathbf{n}$-contractible with respect to $\{y\}$, and $d(x_1\to y_1)\leq \diff_{I_{x_1}} / 2$. Then there exists a permutation $\pi$ of $\{x_1,x_2,y_1,y_2\}$ with $\pi(x_1),\pi(y_1)\in\{y_1,y_2\}$ that can be applied to $(T,x_1,x_2,y_1,y_2)_\mathbf{n}$.
\end{proposition}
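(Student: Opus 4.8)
The plan is to reduce, via a three‑way case split, to a single application of Lemma~\ref{lem:perm}. Since $x_2$ and $y_2$ are $\mathbf{n}$-isolated in the $(\mathbf{n},d_T)$-uniform tree $T$, Proposition~\ref{prop:expandable-contractible-isolated}(1) tells us each of them is $\mathbf{n}$-expandable or $\mathbf{n}$-contractible w.r.t.\ $\{\troot_T\}$, so by monotonicity of these notions we may use their relaxability for any admissible shift (the ``destination'' interval being automatically inside $[2^{M_\mathbf{n}}]$). Set $\delta=\diff_{I_{x_1}}=\diff_{I_{x_2}}=\diff_{I_{y_1}}=\diff_{I_{y_2}}$ (all equal, as the four vertices share a depth), so $\diff_{I_x}=\diff_{I_y}=\delta/2$, and put $\epsilon=d(x_1\to y_1)$; note $0<\epsilon<\delta/2$, since $\epsilon=0$ forces $x_1=y_1$ and $\epsilon=\delta/2$ forces $x_1=y_2$. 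Writing $a_v=\min I_v$, the fact that $\{x_1,x_2\}$ and $\{y_1,y_2\}$ are sibling pairs pins the offsets: $a_{x_2}\equiv a_{x_1}+\tfrac{\delta}{2}$, $a_{y_1}\equiv a_{x_1}+\epsilon$, $a_{y_2}\equiv a_{x_1}+\epsilon+\tfrac{\delta}{2}\pmod{\delta}$, so every shift distance $d(u\to w)$ among the four vertices is determined.

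Now split: (A) if $y_2$ is $\mathbf{n}$-expandable, let $\pi$ be the $3$-cycle $x_1\!\mapsto\!y_1\!\mapsto\!y_2\!\mapsto\!x_1$ with $x_2$ fixed; (B) if $y_2$ is $\mathbf{n}$-contractible and $x_2$ is $\mathbf{n}$-expandable, let $\pi$ be the $4$-cycle $x_1\!\mapsto\!y_1\!\mapsto\!y_2\!\mapsto\!x_2\!\mapsto\!x_1$; (C) if $y_2$ and $x_2$ are both $\mathbf{n}$-contractible, let $\pi$ be the $3$-cycle $x_1\!\mapsto\!y_2\!\mapsto\!x_2\!\mapsto\!x_1$ with $y_1$ fixed. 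In all three $\{\pi(x_1),\pi(y_1)\}=\{y_1,y_2\}$, as required. For each $v\in V=\{x_1,x_2,y_1,y_2\}$ take $d_v$ equal to the canonical shift $d(v\to\pi(v))\in[0,\delta)$ plus the least multiple of $\delta$ placing $d_v$ in the relaxability window of $v$ — the window $[\delta/2,\delta]$ when we invoke a contractibility ($y_1$ w.r.t.\ $\{y\}$, or $x_2$/$y_2$ w.r.t.\ $\{\troot_T\}$) and $[\delta,2\delta]$ when we invoke an expandability ($x_1$ w.r.t.\ $\{x,y\}$, or $x_2$/$y_2$ w.r.t.\ $\{\troot_T\}$). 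One checks case by case that $d_v$ indeed lands in the chosen window, that $d_v+I_v=I_{\pi(v)}\pmod{2^{M_\mathbf{n}}}$ by construction, and that $d_v+I_v$ lies inside the interval‑set witnessing the relation used — automatic for $x_1$ and $y_1$ precisely because $\pi$ carries them into $\{y_1,y_2\}$, so $d_v+I_v$ is a child‑interval of $y$, hence inside $I_y\subseteq I_x\cup I_y$. The one shift falling below $\delta/2$, namely $d(y_2\to x_1)=\tfrac{\delta}{2}-\epsilon$ in case (A), is exactly why (A) is the case requiring $y_2$ expandable. Thus each $\tstring(v)$ is $(i_v,d_v)_\mathbf{n}$-relaxable, and the first two hypotheses of Lemma~\ref{lem:perm} hold.

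It remains to satisfy the partition hypothesis: $A_{x_1},A_{x_2},A_{y_1},A_{y_2},A'$ partition $A=[4\cdot 2^{M_\mathbf{n}}]$. Since $A_v=\DOM_\mathbf{n}(I_v,i_v,1,d_v)$ occupies residue class $a_v$ modulo $\delta$ on its initial stretch and residue class $a_{\pi(v)}$ on its terminal stretch, the partition is equivalent to the cyclic system $i_{\pi(v)}=i_v+m_v-1$ ($v\in V$), where $m_v=(a_v+d_v-a_{\pi(v)})/\delta\ge 0$ counts how many length‑$\delta$ periods $A_v$ wraps through. Summing forces $\sum_v m_v=|V|=4$; writing $m_v=e_v+c_v$ with $e_v\in\{0,1\}$ fixed by the offsets (via $\delta e_v=a_v+d(v\to\pi(v))-a_{\pi(v)}$) and $c_v$ the number of extra periods we built into $d_v$, this becomes $\sum_v d(v\to\pi(v))=\delta\,(4-\sum_v c_v)$. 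A direct computation shows $\sum_v d(v\to\pi(v))$ equals $\delta$ in case (A) and $2\delta$ in cases (B), (C), while our $d_v$-choices give $\sum_v c_v=3,2,2$ respectively — so the identity holds in every case, the cyclic system is consistent, and (using that in each case the relevant $e_v$'s cannot all conspire against the index bounds, e.g.\ $e_{x_1}$ and $e_{x_2}$ are never both $1$) one can choose every $i_v\in[0,\abs{I_v}]$. Lemma~\ref{lem:perm} then supplies a result $T'$ of $\pi$ on $(T,x_1,x_2,y_1,y_2)_\mathbf{n}$. The crux is this last bookkeeping: the partition hypothesis is a rigid conservation law for the total wrap, and it is the three‑way split — together with $x_1$ being expandable over the larger region $\{x,y\}$ (so $d_{x_1}$ can absorb one extra period) and $d(x_1\to y_1)\le\delta/2$ (so $x_1$ reaches a child‑slot of $y$ in a single expansion step) — that makes that law hold with exactly $4$ while keeping all indices in range.
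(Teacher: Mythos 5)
Your strategy is the same as the paper's: pick a permutation of $\{x_1,x_2,y_1,y_2\}$ and shift/index data $(d_v,i_v)$ depending on whether $x_2$, $y_2$ are $\mathbf{n}$-expandable or $\mathbf{n}$-contractible (via Proposition~\ref{prop:expandable-contractible-isolated}(1)), then invoke Lemma~\ref{lem:perm}. Your cases (B) and (C) coincide with the paper's cases (I) and (III), and your case (A) coincides with the paper's case (II) when $x_2$ is expandable. The one genuine difference is that the paper splits your case (A) into two: when $x_2$ is contractible and $y_2$ expandable (paper's case (IV)) the paper switches to the $4$-cycle $x_1\to y_1\to y_2\to x_2\to x_1$, whereas you keep $x_2$ fixed and take $d_{x_2}=\delta$ (equivalently $2\,\diff_{I_x}$), which lands in both the contractibility window $[\delta/2,\delta]$ and the expandability window $[\delta,2\delta]$; this is a correct small simplification that turns four cases into three. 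Your explicit bookkeeping of the partition hypothesis through $m_v = e_v + c_v$ and the conservation identity $\sum_v d(v\to\pi(v)) = \delta(4-\sum_v c_v)$ makes transparent what the paper leaves as ``straightforward to see,'' and your observation that the ``$I_W$'' requirement for $x_1$ and $y_1$ is discharged automatically because $\pi(x_1),\pi(y_1)\in\{y_1,y_2\}$ land inside $I_y$ is exactly the right reason the hypotheses $x_1$ expandable w.r.t.\ $\{x,y\}$ and $y_1$ contractible w.r.t.\ $\{y\}$ suffice.
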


\begin{proof}
  Since $x_2, y_2$ are $\mathbf{n}$-isolated in $T$, by Proposition~\ref{prop:expandable-contractible-isolated} part (1), we know that each of $x_2$ and $y_2$ is either $\mathbf{n}$-expandable or $\mathbf{n}$-contractible with respect to $\{\troot_T\}$. We prove this proposition by considering the following cases. In each case, we define a permutation $\pi$ of $\{x_1,x_2,y_1,y_2\}$ with $\pi(x_1),\pi(y_1)\in\{y_1,y_2\}$ and for each $v\in\{x_1,x_2,y_1,y_2\}$ we define $i_v\in[0,\abs{I_v}]$ and $d_v\in\N$. In each case, similar to the proof of Proposition~\ref{prop:2-swap}, it is straightforward to see that the conditions for Lemma~\ref{lem:perm} hold, and hence $\pi$ can be applied to $(T,x_1,x_2,y_1,y_2)_\mathbf{n}$. Let $d = \diff_{I_x} = \diff_{I_y}$. 
  
  \vspace{0.3cm}
  {\em Case (I): $x_2$ is $\mathbf{n}$-expandable, $y_2$ is $\mathbf{n}$-contractible.}\\
  \begin{align*}
    & d_{x_1} = 2\,d + d(x_1\to y_1), & & i_{x_1} = 1,\\
    & d_{x_2} = 2\,d + d(x_2\to x_1), & & i_{x_2} = \delta_{\min I_{x_2} < \min I_{x_1}},\\
    & d_{y_1} = d(y_1\to y_2), & & i_{y_1} = \delta_{\min I_{y_1} < \min I_{x_1}} + 1,\\
    & d_{y_2} = d(y_2\to x_2), & & i_{y_2} = \delta_{\min I_{y_2} < \min I_{x_1}},\\
    & \pi(x_1)=y_1, \pi(x_2)=x_1, \pi(y_1)=y_2, \pi(y_2)=x_2.
  \end{align*}
  
  \vspace{0.3cm}
  {\em Case (II): $x_2$ is $\mathbf{n}$-expandable, $y_2$ is $\mathbf{n}$-expandable.}\\
  \begin{align*}
    & d_{x_1} = 2\,d + d(x_1\to y_1), & & i_{x_1} = 1,\\
    & d_{x_2} = 2\,d, & & i_{x_2} = 1,\\
    & d_{y_1} = d(y_1\to y_2), & & i_{y_1} = \delta_{\min I_{y_1} < \min I_{x_1}} + 1,\\
    & d_{y_2} = 2\,d + d(y_2\to x_1), & & i_{y_2} = \delta_{\min I_{y_2} < \min I_{x_1}},\\
    & \pi(x_1)=y_1, \pi(x_2)=x_2, \pi(y_1)=y_2, \pi(y_2)=x_1.
  \end{align*}

  \vspace{0.3cm}
  {\em Case (III): $x_2$ is $\mathbf{n}$-contractible, $y_2$ is $\mathbf{n}$-contractible.}\\
  \begin{align*}
    & d_{x_1} = 2\,d + d(x_1\to y_2), & & i_{x_1} = 0,\\
    & d_{x_2} = d(x_2\to x_1), & & i_{x_2} = \delta_{\min I_{x_2} < \min I_{x_1}},\\
    & d_{y_1} = 2\,d, & & i_{y_1} = 0,\\
    & d_{y_2} = d(y_2\to x_2), & & i_{y_2} = \delta_{\min I_{y_2} < \min I_{x_1}},\\
    & \pi(x_1)=y_2, \pi(x_2)=x_1, \pi(y_1)=y_1, \pi(y_2)=x_2.
  \end{align*}

  \vspace{0.3cm}
  {\em Case (IV): $x_2$ is $\mathbf{n}$-contractible, $y_2$ is $\mathbf{n}$-expandable.}\\
  \begin{align*}
    & d_{x_1} = 2\,d + d(x_1\to y_1), & & i_{x_1} = 1,\\
    & d_{x_2} = d(x_2\to x_1), & & i_{x_2} = \delta_{\min I_{x_2} < \min I_{x_1}} + 1,\\
    & d_{y_1} = d(y_1\to y_2), & & i_{y_1} = \delta_{\min I_{y_1} < \min I_{x_1}} + 1,\\
    & d_{y_2} = 2\,d + d(y_2\to x_2), & & i_{y_2} = \delta_{\min I_{y_2} < \min I_{x_1}},\\
    & \pi(x_1)=y_1, \pi(x_2)=x_1, \pi(y_1)=y_2, \pi(y_2)=x_2.
  \end{align*}

\end{proof}

\begin{proposition}
\label{prop:4-swap}
  Let $\Sigma$ be a finite alphabet, $\mathbf{n}\in \N^{\Sigma_\mathbf{n}}$ be a frame on $\Sigma$, and $T$ be an $(\mathbf{n},d_T)$-uniform tree.
  
  Let $x\neq y \in T$ have the same depth, $x_1, x_2$ be the two children of $x$, and $y_1, y_2$ be the two children of $y$. Assume that $x_1,x_2,y_1,y_2$ are $\mathbf{n}$-isolated in $T$, $x_1$ is $\mathbf{n}$-expandable with respect to $\{x,y\}$, and $y_1$ is $\mathbf{n}$-contractible with respect to $\{y\}$.

   Then there exists a permutation $\pi$ of $V = \{x_1,x_2,y_1,y_2\}$ with $\pi(x_1),\pi(y_1)\in\{y_1,y_2\}$ and an $\mathbf{n}$-compatible tree $T'$ such that
  \begin{itemize}
    \item $T'_{v'}\cong T_{\pi^{-1}(v)}$ for $v\in V$, where $w'\in T'$ is the corresponding vertex to $w$ for any $w\in T$, and
    \item $T\to_\mathbf{n} T'$ and for some connecting string $t_{T,T'}$ for $(T,T')_\mathbf{n}$ we have $\IND_\mathbf{n}(t_{T,T'}) \subseteq \cup_{v\in V} I_v \mod 2^{M_\mathbf{n}}$ and $\IND_\mathbf{n}(t_{T,T'}, y_1) \subseteq I_y \mod 2^{M_\mathbf{n}}$.
  \end{itemize}
\end{proposition}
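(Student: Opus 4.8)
The plan is to reduce to Proposition~\ref{prop:4-perm}. First observe that the conclusion asked for here is exactly that of Proposition~\ref{prop:4-perm} except that the bookkeeping on active indices carrying unique characters is demanded only for $y_1$: whenever $\pi(y_1)\in\{y_1,y_2\}$ one has $I_{y_1}\cup I_{\pi(y_1)}\subseteq I_{y_1}\cup I_{y_2}=I_y$, so the clause $\IND_\mathbf{n}(t_{T,T'},y_1)\subseteq I_y$ is weaker than the clause $\IND_\mathbf{n}(t_{T,T'},y_1)\subseteq I_{y_1}\cup I_{\pi(y_1)}$ produced by Proposition~\ref{prop:4-perm}. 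On the other hand Proposition~\ref{prop:4-swap} drops the hypothesis $d(x_1\to y_1)\le \diff_{I_{x_1}}/2$. So I split on the size of $d(x_1\to y_1)$. If $d(x_1\to y_1)\le\diff_{I_{x_1}}/2$, all hypotheses of Proposition~\ref{prop:4-perm} hold; unwinding Definition~\ref{def:permutation} for the permutation it produces, together with $I_{y_1}\cup I_{\pi(y_1)}\subseteq I_y$, gives exactly the claim.

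So assume $d(x_1\to y_1)>\diff_{I_{x_1}}/2$. Since $y_1,y_2$ are the two children of $y$ we have $d(y_1\to y_2)=\diff_{I_y}=\diff_{I_{x_1}}/2$, and therefore $d(x_1\to y_2)=d(x_1\to y_1)-\diff_{I_{x_1}}/2<\diff_{I_{x_1}}/2$. By Proposition~\ref{prop:expandable-contractible-isolated}(1) the $\mathbf{n}$-isolated vertex $y_2$ is $\mathbf{n}$-expandable or $\mathbf{n}$-contractible with respect to $\{\troot_T\}$. If $y_2$ is $\mathbf{n}$-contractible with respect to $\{\troot_T\}$, pass to the reference set $\{x_1,y_2\}$ via the monotonicity remark after the definition of expandable/contractible (so that $x_1$ is $\mathbf{n}$-expandable and $y_2$ is $\mathbf{n}$-contractible with respect to $\{x_1,y_2\}$), and apply Proposition~\ref{prop:2-swap} to swap $x_1$ and $y_2$, using $d(x_1\to y_2)\le\diff_{I_{x_1}}/2$. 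The resulting $T'$ agrees with $T$ off $I_{x_1}\cup I_{y_2}$, and since $\Sigma^{\text{unique}}_{y_1}$ is disjoint from $\Sigma_{\tstring(x_1)}$ and from $\Sigma_{\tstring(y_2)}$, any connecting string for $(T,T')_\mathbf{n}$ furnished by Lemma~\ref{lem:perm} places characters of $\Sigma^{\text{unique}}_{y_1}$ only at indices congruent mod $2^{M_\mathbf{n}}$ to $I_{y_1}\subseteq I_y$; hence the permutation exchanging $x_1,y_2$ and fixing $x_2,y_1$ works.

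If instead $y_2$ is $\mathbf{n}$-expandable with respect to $\{\troot_T\}$, first swap $y_1$ and $y_2$ by Proposition~\ref{prop:2-swap} (legitimate since $d(y_2\to y_1)=\diff_{I_y}=\diff_{I_{y_2}}/2$, $y_2$ is $\mathbf{n}$-expandable and $y_1$ is $\mathbf{n}$-contractible), obtaining an $(\mathbf{n},d_T)$-uniform tree $\tilde T$ in which the vertex at the former position of $y_2$ carries a copy of $T_{y_1}$. Relaxability of a string — hence $\mathbf{n}$-expandability and $\mathbf{n}$-contractibility of a vertex — depends only on the isomorphism type of its subtree and on $\mathbf{n}$, so that vertex is again $\mathbf{n}$-contractible with respect to $\{y\}$ in $\tilde T$, and the distance from $x_1$ to it equals $d(x_1\to y_2)<\diff_{I_{x_1}}/2$. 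Thus Proposition~\ref{prop:4-perm} applies to $\tilde T$ with the two children of $y$ labelled so that the copy of $T_{y_1}$ plays the role of ``$y_1$''; composing its connecting string with that of the first swap via Lemma~\ref{lem:string-graph}(1) and composing the two permutations yields the required $\pi$ and $T'$. In both steps the copy of $T_{y_1}$ is moved only within $I_y$ — in the first by the $\IND_\mathbf{n}(\cdot,y_1)\subseteq I_{y_1}\cup I_{y_2}$ clause of Definition~\ref{def:permutation} coming from Proposition~\ref{prop:2-swap}, in the second by the analogous clause from Proposition~\ref{prop:4-perm} — which gives $\pi(y_1)\in\{y_1,y_2\}$ and $\IND_\mathbf{n}(t_{T,T'},y_1)\subseteq I_y$; and $\IND_\mathbf{n}(t_{T,T'})\subseteq\bigcup_{v\in V}I_v=I_x\cup I_y$ since each step's active indices already lie in $I_x\cup I_y$.

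The step I expect to be the main obstacle is exactly this last sub-case: one must verify that performing a preliminary swap of $y_1$ and $y_2$ does not destroy the hypotheses needed to re-invoke Proposition~\ref{prop:4-perm} — concretely, that $\mathbf{n}$-contractibility of an $\mathbf{n}$-isolated vertex is intrinsic to its labelled subtree (so it survives being transported to another index-progression with the same common difference). Once that is in hand, the remaining points — that the two connecting strings $\diamond_\mathbf{n}$-glue because each has $[\mathbf{n},+1]$ and $[\mathbf{n},-1]$ restriction $\tstring(\tilde T)$, and that the gluing creates no active indices outside $I_x\cup I_y$ and no $\Sigma^{\text{unique}}_{y_1}$-placements outside $I_y$ — are routine bookkeeping with the definitions.
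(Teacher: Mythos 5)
Your proof is correct and follows essentially the same approach as the paper: the same initial reduction via Proposition~\ref{prop:4-perm} when $d(x_1\to y_1)\le\diff_{I_{x_1}}/2$, the same observation that otherwise $d(x_1\to y_2)\le\diff_{I_{x_1}}/2$, the same dichotomy on whether $y_2$ is $\mathbf{n}$-contractible or $\mathbf{n}$-expandable, and the same two-step composition (first swap $y_1,y_2$, then apply Proposition~\ref{prop:4-perm}) in the expandable sub-case. The paper likewise asserts without detailed verification the invariance points you flag (that after the preliminary swap the transported copy of $T_{y_1}$ is still $\mathbf{n}$-contractible with respect to $\{y\}$, and the $\Sigma^{\text{unique}}_{y_1}$ bookkeeping), so your proof is at the same level of rigor.
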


\begin{proof}
  Note that if there exists a permutation $\pi$ of $V$ with $\pi(x_1),\pi(y_1)\in\{y_1,y_2\}$ that can be applied to $(T,x_1,x_2,y_1,y_2)_\mathbf{n}$, then by Definition~\ref{def:permutation} we are done.
  
  Let $d = \diff_{I_x} = \diff_{I_y}$. If $d(x_1\to y_1)\leq d$, the result follows from Proposition~\ref{prop:4-perm}. So, assume $d(x_1\to y_1) > d$, which implies $d(x_1\to y_2)\leq d$.
  
  Since $y_2$ is $\mathbf{n}$-isolated in $T$, by the first part of Proposition~\ref{prop:expandable-contractible-isolated} we know that $y_2$ is either $\mathbf{n}$-expandable or $\mathbf{n}$-contractible with respect to $\{\troot_T\}$. Consider the following cases.
  
  \vspace{0.3cm}
  {\em Case (I): $y_2$ is $\mathbf{n}$-contractible with respect to $\{\troot_T\}$.}\\
  Since $x_1,y_2$ are $\mathbf{n}$-isolated in $T$, $x_1$ is $\mathbf{n}$-expandable with respect to $\{x,y\}$ and hence with respect to $\{x_1,y_2\}$, $y_2$ is $\mathbf{n}$-contractible with respect to $\{\troot_T\}$ and hence with respect to $\{x_1,y_2\}$, and $d(x_1\to y_2)\leq d = \diff_{I_{x_1}} / 2$, by Proposition ~\ref{prop:2-swap}, we can swap $x_1$ and $y_2$, which means that we can apply the permutation $\pi$ of $V$ with $\pi(x_1)=y_2,\pi(x_2)=x_2,\pi(y_1)=y_1,\pi(y_2)=x_1$ to $(T,x_1,x_2,y_1,y_2)_\mathbf{n}$. That completes the proof in this case.
  
  \vspace{0.3cm}
  {\em Case (II): $y_2$ is $\mathbf{n}$-expandable with respect to $\{\troot_T\}$.}\\
  Since $y_1,y_2$ are $\mathbf{n}$-isolated in $T$, $y_1$ is $\mathbf{n}$-contractible with respect to $\{x,y\}$ and hence with respect to $\{y_1,y_2\}$, $y_2$ is $\mathbf{n}$-expandable with respect to $\{\troot_T\}$ and hence with respect to $\{y_1,y_2\}$, and $d(y_1\to y_2) = d = \diff_{I_{y_1}} / 2$, by Proposition ~\ref{prop:2-swap}, we can swap $y_1$ and $y_2$, which means that we can apply the permutation $\phi$ of $V$ with $\phi(x_1)=x_1,\phi(x_2)=x_2,\phi(y_1)=y_2,\phi(y_2)=y_1$ to $(T,x_1,x_2,y_1,y_2)_\mathbf{n}$. So, there exists an $(\mathbf{n},d_T)$-uniform tree $T''$ such that if for any $w\in T$ we denote its corresponding vertex in $T''$ by $w''$, i.e.\ $I_{w''} = I_w$, then we have
  \begin{itemize}
    \item $T''_{v''}\cong T_{\phi^{-1}(v)}$ for $v\in V$, and
    \item $T\to_\mathbf{n} T''$ and for some connecting string $t_{T,T''}$ for $(T,T'')_\mathbf{n}$, we have $\IND_\mathbf{n}(t_{T,T''}) \subseteq I_x\cup I_y \mod 2^{M_\mathbf{n}}$ and $\IND_\mathbf{n}(t_{T,T''},y_1)\subseteq I_{y_1}\cup I_{\phi(y_1)} = I_{y_1}\cup I_{y_2} = I_y \mod 2^{M_\mathbf{n}}$.
  \end{itemize}
  
  Let $V'' = \{x_1'', x_2'', y_1'', y_2''\}$. Note that all $v''\in V''$ are $\mathbf{n}$-isolated in $T''$. It is straightforward to see that since $y_1$ is $\mathbf{n}$-contractible with respect to $\{x,y\}$, $y_2''$ is $\mathbf{n}$-contractible with respect to $\{x'',y''\}$. Similarly, $x_1''$ is $\mathbf{n}$-expandable with respect to $\{x'',y''\}$. Also, $d(x_1''\to y_2'') = d(x_1\to y_2) \leq d = \diff_{I_{x_1''}} / 2$. So, by Proposition ~\ref{prop:4-perm}, there exists a permutation $\phi''$ of $V''$ with $\phi''(x_1''),\phi''(y_2'')\in\{y_1'',y_2''\}$ that can be applied to $(T'',x_1'',x_2'',y_1'',y_2'')_\mathbf{n}$. So, there exists an $(\mathbf{n},d_T)$-uniform tree $T'$ such that if for any $w\in T$ we denote its corresponding vertex in $T'$ by $w'$, i.e.\ $I_{w'} = I_w$, then we have
  \begin{itemize}
    \item $T'_{v'}\cong T''_{\phi''^{-1}(v'')}$ for $v\in V$, and
    \item $T''\to_\mathbf{n} T'$ and for some connecting string $t_{T'',T'}$ for $(T'',T')_\mathbf{n}$, we have $\IND_\mathbf{n}(t_{T'',T'}) \subseteq I_x\cup I_y \mod 2^{M_\mathbf{n}}$ and $\IND_\mathbf{n}(t_{T'',T'},y_2'')\subseteq I_{y_2''}\cup I_{\phi''(y_2'')} = I_{y_2''}\cup I_{y_1''} = I_y \mod 2^{M_\mathbf{n}}$.
  \end{itemize}
  
  Let $\alpha:T''\to T$ be defined by $\alpha(v'') = v$ for any $v\in T$. Define $\pi:V\to V$ by
  $$\pi(v) = \alpha\big(\phi''\big(\phi(v)''\big)\big) $$
  for any $v\in V$. Note that $\pi$ is a permutation of $V$ and from the results in the previous two paragraphs, we get $T'_{v'}\cong T_{\pi^{-1}(v)}$ for $v\in V$. Let $t_{T,T'} = t_{T,T''}\diamond_\mathbf{n} t_{T'',T'}$. By Lemma~\ref{lem:string-graph} part (1), we get that $T\to_{\mathbf{n}, t_{T,T'}} T'$. To complete the proof of this proposition, we need to show the following.
  
  \begin{itemize}
  \item First, we need to show $\IND_\mathbf{n}(t_{T,T'}) \subseteq I_x\cup I_y \mod 2^{M_\mathbf{n}}$. Since
  $$\IND_\mathbf{n}(t_{T,T'}) \subseteq \IND_\mathbf{n}(t_{T,T''}) \cup \IND_\mathbf{n}(t_{T'',T'}) \mod 2^{M_\mathbf{n}},$$
  this follows from $\IND_\mathbf{n}(t_{T,T''}), \IND_\mathbf{n}(t_{T'',T'}) \subseteq I_x\cup I_y \mod 2^{M_\mathbf{n}}$.
  
  \item Then, we need to show $\IND_\mathbf{n}(t_{T,T'},y_1)\subseteq I_y \mod 2^{M_\mathbf{n}}$. Note that mod $2^{M_\mathbf{n}}$ we have
  \begin{align*}
    \IND_\mathbf{n}(t_{T,T'},y_1) = & \{i\in\domain(t_{T,T'})\;|\; t_{T,T'}(i)\in \Sigma^{\text{uniqe}}_{\tstring(y_1)}\}\\
    = & \{i\in\domain(t_{T,T''})\;|\; t_{T,T''}(i)\in \Sigma^{\text{uniqe}}_{\tstring(y_1)} \}\\
    & \cup \{i\in\domain(t_{T'',T'})\;|\; t_{T'',T'}(i)\in \Sigma^{\text{uniqe}}_{\tstring(y_2'')} \}\\
    = & \IND_\mathbf{n}(t_{T,T''},y_1) \cup \IND_\mathbf{n}(t_{T'',T'},y_2'')\\
    \subseteq & I_y \mod 2^{M_\mathbf{n}}.
  \end{align*}
  \end{itemize}
  
  So, the proof is complete.
\end{proof}

\subsection{Proof of the Second Claim in Lemma ~\ref{lem:n-p-good-strings}}
Let $\Sigma$ be a finite alphabet, $\mathbf{n}\in \N^\Sigma$ be a frame on $\Sigma$, $\mathbf{p}, \mathbf{p}'\in \PBA_\mathbf{n}$, and $T$ be an $(\mathbf{n}, \mathbf{p})$-uniform tree. We want to show there exists an $(\mathbf{n}, \mathbf{p}')$-uniform tree $T'$ such that $T\to_\mathbf{n} T'$.

Since $T^{(1)}\to_\mathbf{n} T^{(2)}$ and $T^{(2)}\to_\mathbf{n} T^{(3)}$ implies $T^{(1)}\to_\mathbf{n} T^{(3)}$, we will prove Lemma~\ref{lem:n-p-good-strings} part (2) by a series of reductions.

\subsubsection{Step 1}
Note that $\PBA_\mathbf{n}$ is finite. Define a graph, $\Gamma_\mathbf{n}$, on the vertex set $\PBA_\mathbf{n}$ by connecting two pseudo-binary approximations $\mathbf{p}$, $\mathbf{p}'$ if and only if they differ on exactly two characters, say $\alpha$ and $\beta$, and also $\irr(\mathbf{p}), \irr(\mathbf{p}') \in \{\sim, \alpha, \beta\}$.

\begin{claim}
\label{claim:Gamma-connected}
  $\Gamma_\mathbf{n}$ is connected.
\end{claim}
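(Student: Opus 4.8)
The plan is to identify $\Gamma_\mathbf{n}$ with a supergraph of the $1$-skeleton of the polytope $\Pi_\mathbf{n}$, and then invoke the classical fact that the $1$-skeleton of a polytope is connected. We may assume $\Pi_\mathbf{n}\neq\emptyset$, since otherwise $\Gamma_\mathbf{n}$ has no vertices. View $\Pi_\mathbf{n}$ as the bounded polyhedron $\{\mathbf{q}\in\R^\Sigma:\sum_{\sigma}\mathbf{q}_\sigma=1,\ 2^{-\mathbf{n}_\sigma}\le\mathbf{q}_\sigma\le 2^{-(\mathbf{n}_\sigma-1)}\text{ for all }\sigma\in\Sigma\}$. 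First I would check that $\PBA_\mathbf{n}$ is precisely the vertex (extreme point) set of $\Pi_\mathbf{n}$: by Claim~\ref{claim:convex-hull}(2) every vertex of $\Pi_\mathbf{n}$ lies in $\PBA_\mathbf{n}$, and conversely a pseudo-binary approximation $\mathbf{p}$ is extreme because all but at most one of its coordinates sits at an endpoint $2^{-\mathbf{n}_\sigma}$ or $2^{-(\mathbf{n}_\sigma-1)}$ of the allowed interval, so in any expression $\mathbf{p}=\tfrac12(\mathbf{q}+\mathbf{q}')$ with $\mathbf{q},\mathbf{q}'\in\Pi_\mathbf{n}$ those coordinates are forced to coincide in $\mathbf{q}$ and $\mathbf{q}'$, and then $\sum_\sigma\mathbf{q}_\sigma=\sum_\sigma\mathbf{q}'_\sigma=1$ forces the last coordinate to coincide as well, so $\mathbf{q}=\mathbf{q}'=\mathbf{p}$. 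Thus it suffices to show that the $1$-skeleton of $\Pi_\mathbf{n}$, a graph on vertex set $\PBA_\mathbf{n}$, is a (connected) spanning subgraph of $\Gamma_\mathbf{n}$.

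The one step needing care is to verify that every edge of the polytope $\Pi_\mathbf{n}$ joins two $\Gamma_\mathbf{n}$-adjacent vertices. Let $E$ be an edge of $\Pi_\mathbf{n}$ with endpoints $\mathbf{p}\neq\mathbf{p}'$. Being a $1$-dimensional face, $E$ is cut out from $\Pi_\mathbf{n}$ by tight box constraints that pin down all coordinates except two, say $\alpha$ and $\beta$ (after imposing $\sum_\sigma\mathbf{q}_\sigma=1$ exactly one degree of freedom remains); in particular $\mathbf{p}$ and $\mathbf{p}'$ agree on every coordinate other than $\alpha$ and $\beta$. Since $\mathbf{p}\neq\mathbf{p}'$ are probability distributions, they must then differ on both $\alpha$ and $\beta$, because agreeing on one of the two would, via $\sum_\sigma\mathbf{q}_\sigma=1$, force agreement on the other and hence $\mathbf{p}=\mathbf{p}'$. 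Finally, $\mathbf{p}$ is a vertex lying on the $1$-face $E$, so it satisfies at least one tight box constraint beyond those defining $E$, and that extra tight constraint is necessarily for $\alpha$ or for $\beta$; hence the unique coordinate of $\mathbf{p}$ that may fail to equal one of $2^{-\mathbf{n}_\sigma},2^{-(\mathbf{n}_\sigma-1)}$ lies in $\{\alpha,\beta\}$, i.e.\ $\irr(\mathbf{p})\in\{\sim,\alpha,\beta\}$, and likewise $\irr(\mathbf{p}')\in\{\sim,\alpha,\beta\}$. By the definition of $\Gamma_\mathbf{n}$, $\mathbf{p}$ and $\mathbf{p}'$ are adjacent. (If $\Pi_\mathbf{n}$ is itself just a point or a segment then $\Gamma_\mathbf{n}$ is trivially connected, so this covers all cases.)

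It then remains to recall that the $1$-skeleton of a polytope is connected. A short self-contained argument: choose a linear functional $\ell$ whose maximum over the finite set $\PBA_\mathbf{n}$ is attained at a unique vertex $\mathbf{v}^\ast$; from any vertex $\mathbf{v}\neq\mathbf{v}^\ast$ the tangent cone of $\Pi_\mathbf{n}$ at $\mathbf{v}$ contains a direction along which $\ell$ strictly increases, and since that cone is generated by the edge directions at $\mathbf{v}$, at least one edge incident to $\mathbf{v}$ leads to a vertex with strictly larger $\ell$-value; iterating terminates at $\mathbf{v}^\ast$, so any two vertices are joined by a path of polytope-edges. By the previous paragraph every such path is also a path in $\Gamma_\mathbf{n}$, and therefore $\Gamma_\mathbf{n}$ is connected.

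I expect the main obstacle to be the bookkeeping in the middle paragraph: correctly matching the ``tight box constraints along a $1$-face'' description of an edge of $\Pi_\mathbf{n}$ with the purely combinatorial edge condition defining $\Gamma_\mathbf{n}$, and making sure the degenerate and low-dimensional situations (empty $\Pi_\mathbf{n}$, or $\Pi_\mathbf{n}$ of dimension $0$ or $1$, as well as the possibility that $\Pi_\mathbf{n}$ fails to be full-dimensional inside the hyperplane $\sum_\sigma\mathbf{q}_\sigma=1$) are all accounted for. This is routine polyhedral combinatorics rather than anything deep, and everything else reduces to standard facts about polytopes.
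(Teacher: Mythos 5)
Your proof is correct, and it takes a genuinely different route from the paper. The paper argues directly and combinatorially: given two non-adjacent $\mathbf{p},\mathbf{p}'\in\PBA_\mathbf{n}$, it explicitly constructs a $\mathbf{p}''$ adjacent to $\mathbf{p}$ that is strictly closer to $\mathbf{p}'$ with respect to a lexicographic potential (first the $\ell^1$-type distance $d_{\mathbf{p},\mathbf{p}'}$ restricted to non-irregular coordinates, then the cardinality $\abs{\Sigma_{\mathbf{p},\mathbf{p}'}}$), and finiteness of $\PBA_\mathbf{n}$ terminates the descent. You instead realize $\Pi_\mathbf{n}$ as a box polytope sliced by the simplex hyperplane, show $\PBA_\mathbf{n}$ is exactly its vertex set, show every polytope edge pins all but two coordinates at endpoints so that its endpoints are $\Gamma_\mathbf{n}$-adjacent, and then invoke connectivity of the $1$-skeleton. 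Both arguments are complete: your identification of $\PBA_\mathbf{n}$ with the vertex set correctly handles both inclusions, the careful ``pinned coordinates'' analysis of an edge correctly handles the non-simple and lower-dimensional cases (since a coordinate that is forced constant on a face of this box-times-hyperplane polytope must sit at an endpoint, as $a_\sigma<b_\sigma$ strictly for each $\sigma$), and your linear-functional ascent gives the skeleton connectivity. The tradeoff is that the paper's proof is elementary and self-contained, needing nothing beyond the definitions plus finiteness, while yours buys conceptual clarity (adjacency in $\Gamma_\mathbf{n}$ is precisely polytope adjacency) at the cost of importing a modest amount of polyhedral machinery; your version also produces, as a byproduct, the sharper structural statement that $\Gamma_\mathbf{n}$ contains the graph of the polytope $\Pi_\mathbf{n}$.
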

\begin{proof}
  For $\mathbf{p},\mathbf{p}'\in\PBA_\mathbf{n}$, let 
  \begin{align*}
  \Sigma_{\mathbf{p}, \mathbf{p}'} & = \{\sigma\in\Sigma \ | \ \mathbf{p}_\sigma\neq \mathbf{p}'_\sigma\} \cup \{\irr(\mathbf{p}), \irr(\mathbf{p}')\} \setminus \{\sim\},\\
  d_{\mathbf{p},\mathbf{p}'} 
  & = \sum_{\sigma\in\Sigma \setminus \{\irr(\mathbf{p}),\irr(\mathbf{p}')\}} \abs{\mathbf{p}_\sigma - \mathbf{p}'_\sigma} \\
  & = \sum_{\sigma\in\Sigma} \abs{\mathbf{p}_\sigma-\mathbf{p}'_\sigma} \delta_{\sigma\notin \{\irr(\mathbf{p}),\irr(\mathbf{p}')\}}.
  \end{align*}

  Given $\mathbf{p}\neq \mathbf{p}'\in\PBA_\mathbf{n}$, if $\mathbf{p}$ and $\mathbf{p}'$ are not connected in $\Gamma_\mathbf{n}$, we define $\mathbf{p}''\in\PBA_\mathbf{n}$ such that $\mathbf{p}''$ is connected to $\mathbf{p}$ in $\Gamma_\mathbf{n}$ and either $d_{\mathbf{p},\mathbf{p}'} > d_{\mathbf{p}'',\mathbf{p}'}$, or $d_{\mathbf{p},\mathbf{p}'} = d_{\mathbf{p}'',\mathbf{p}'}$ and $\abs{\Sigma_{\mathbf{p},\mathbf{p}'}} > \abs{\Sigma_{\mathbf{p}'',\mathbf{p}'}}$. Since $\PBA_\mathbf{n}$ is finite, this shows that $\Gamma_\mathbf{n}$ is connected.
  
  Let $\mathbf{p}\neq \mathbf{p}'\in\PBA_\mathbf{n}$ be not connected in $\Gamma_\mathbf{n}$. Define $f\colon\Sigma\to [0,1]$ by
  $$
  f(\sigma) = 
  \begin{cases}
    \frac{1}{2^{\mathbf{n}_\sigma}} & \text{ if } \mathbf{p}_\sigma > \mathbf{p}'_\sigma,\\
    \frac{1}{2^{\mathbf{n}_\sigma-1}} & \text{ if } \mathbf{p}_\sigma < \mathbf{p}'_\sigma,\\
    \frac{1}{2^{\mathbf{n}_\sigma-1}} & \text{ if } \mathbf{p}_\sigma = \mathbf{p}'_\sigma \text{ and } \sigma = \irr(\mathbf{p}) = \irr(\mathbf{p}'),\\
    \mathbf{p}_\sigma & \text{ otherwise.}
  \end{cases}
  $$
  Let $A = \{\sigma\in\Sigma \ | \ f(\sigma) < \mathbf{p}_\sigma\}$ and $B = \{\sigma\in\Sigma \ | \ f(\sigma) > \mathbf{p}_\sigma\}$. Since $\mathbf{p}\neq \mathbf{p}'$, we know that $A,B\neq \emptyset$. Let $\alpha\in A \cup B$ be a character such that either $\irr(\mathbf{p}) =\ \sim$ or $\alpha = \irr(\mathbf{p})$. Assume that $\alpha\in A$ (if $\alpha\in B$, the proof is the same). Let $\beta\in B$ be a character such that either $B = \{\irr(\mathbf{p}')\}$ or $\beta \neq \irr(\mathbf{p}')$.
  
  Let
  $$\eps = \min(\mathbf{p}_\alpha - f(\alpha), f(\beta) - \mathbf{p}_\beta),$$
  and define the probability distribution $\mathbf{p}''$ on $\Sigma$ by
  $$
	\mathbf{p}''_\sigma =
	\begin{cases}
	  \mathbf{p}_\sigma & \text{ if } \sigma\in\Sigma\setminus \{ \alpha, \beta \},\\
	  \mathbf{p}_\sigma + \eps & \text{ if } \sigma = \beta,\\
	  \mathbf{p}_\sigma - \eps & \text{ if } \sigma = \alpha.
	\end{cases}
  $$
  It is straightforward to see that $\mathbf{p}''\in\PBA_\mathbf{n}$ and that $\mathbf{p}$ and $\mathbf{p}''$ are connected in $\Gamma_\mathbf{n}$. To complete the proof, it is enough to show that either $d_{\mathbf{p}'',\mathbf{p}'} \leq d_{\mathbf{p},\mathbf{p}'} - \eps$, or $d_{\mathbf{p}'',\mathbf{p}'} = d_{\mathbf{p},\mathbf{p}'}$ and $\abs{\Sigma_{\mathbf{p},\mathbf{p}'}} > \abs{\Sigma_{\mathbf{p}'',\mathbf{p}'}}$.
  
  First, assume that $\irr(\mathbf{p}) =\ \sim$. Note that we have one of the following two cases.
  \begin{itemize}
    \item $\beta \neq \irr(\mathbf{p}')$. In this case, let $\gamma = \beta$.
    \item $\beta = \irr(\mathbf{p}')$. In this case, let $\gamma = \alpha$. 
  \end{itemize}
  Note that $\gamma \notin \{\irr(\mathbf{p}),\irr(\mathbf{p}')\}$ and $\abs{\mathbf{p}''_\gamma - \mathbf{p}'_\gamma} = \abs{\mathbf{p}_\gamma - \mathbf{p}'_\gamma} - \eps$. So, we have
    $$ \abs{\mathbf{p}''_\gamma-\mathbf{p}'_\gamma} \delta_{\gamma\notin \{\irr(\mathbf{p}''),\irr(\mathbf{p}')\}} \leq
    \abs{\mathbf{p}_\gamma-\mathbf{p}'_\gamma} \delta_{\gamma\notin \{\irr(\mathbf{p}),\irr(\mathbf{p}')\}} - \eps.$$
    On the other hand, for all $\sigma\neq\gamma$ in $\Sigma$, we have 
    $$ \abs{\mathbf{p}''_\sigma-\mathbf{p}'_\sigma} \delta_{\sigma\notin \{\irr(\mathbf{p}''),\irr(\mathbf{p}')\}} \leq
    \abs{\mathbf{p}_\sigma-\mathbf{p}'_\sigma} \delta_{\sigma\notin \{\irr(\mathbf{p}),\irr(\mathbf{p}')\}}.$$
    So, by comparing all the terms in the definition of $d_{\mathbf{p},\mathbf{p}'}$ and $d_{\mathbf{p}'',\mathbf{p}'}$, we have $d_{\mathbf{p}'',\mathbf{p}'} \leq d_{\mathbf{p},\mathbf{p}'} -\eps$. So, we are done in this case.
    
    Now, assume that $\irr(\mathbf{p}) = \alpha$. If $\beta\neq \irr(\mathbf{p}')$, let $\gamma = \beta$ and the proof is the same as before. So, we may assume that $\beta = \irr(\mathbf{p}')$. It is not difficult to see that $d_{\mathbf{p}'',\mathbf{p}'} = d_{\mathbf{p},\mathbf{p}'}$. So, we just need to show that $\abs{\Sigma_{\mathbf{p},\mathbf{p}'}} > \abs{\Sigma_{\mathbf{p}'',\mathbf{p}'}}$. Since $\beta=\irr(\mathbf{p}')$, we must have $B = \{\beta\}$, so we have $\mathbf{p}''(\alpha) = \mathbf{p}'(\alpha)$, which means that $\alpha\notin \Sigma_{\mathbf{p}'',\mathbf{p}'}$. It is straightforward to see that $\Sigma_{\mathbf{p}'',\mathbf{p}'} \subseteq \Sigma_{\mathbf{p},\mathbf{p}'}$ and $\alpha\in\Sigma_{\mathbf{p},\mathbf{p}'}$. This completes the proof.
  
\end{proof}

Since $\Gamma_\mathbf{n}$ is connected, without loss of generality we may assume that $\mathbf{p}$ and $\mathbf{p}'$ are adjacent in $\Gamma_\mathbf{n}$.

\subsubsection{Step 2}
\begin{definition}[Sorted trees]
  Let $\Sigma$ be a finite alphabet, and $T$ be a $\Sigma$-labeled $N$-deep tree for some $N\in\N$. We call $T$ {\em sorted} if for every vertex $v\in T$ and every $\sigma\in \Sigma$, if $\sigma$ appears in both $T_v$ and $T_w$, where $w$ is $v$'s sibling, then either $v$ or $w$ is labeled $\sigma$.
\end{definition}

\begin{proposition}
\label{prop:sorted-tree}
  Let $\Sigma$ be a finite alphabet, $\mathbf{n}\in \N^\Sigma$ be a frame on $\Sigma$, $\mathbf{p}\in\PBA_\mathbf{n}$, and $T$ be an $(\mathbf{n}, \mathbf{p})$-uniform tree. There exists a sorted $(\mathbf{n}, \mathbf{p})$-uniform tree $T'$, such that $T\to_\mathbf{n} T'$.
\end{proposition}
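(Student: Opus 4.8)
The plan is to obtain $T'$ as the end result of a finite chain of elementary subtree-swaps, each one an application of Proposition~\ref{prop:2-swap} or Proposition~\ref{prop:4-swap}, composed through transitivity of $\to_\mathbf{n}$ (which follows from Lemma~\ref{lem:string-graph} part (1)). The first step is a structural reduction pinning down exactly where $T$ can fail to be sorted. Since $\mathbf{n}\in\N^\Sigma$ we have $\Sigma_\mathbf{n}=\Sigma$, and since $\mathbf{p}\in\PBA_\mathbf{n}$ each $\alpha\neq\irr(\mathbf{p})$ has $\mathbf{p}_\alpha\in\{2^{-\mathbf{n}_\alpha},2^{-(\mathbf{n}_\alpha-1)}\}$. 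Plugging this into the definition of an $(\mathbf{n},\mathbf{p})$-uniform tree, one checks that for every such $\alpha$ the vertex $c_T(\alpha)$ has $T_{c_T(\alpha)}$ entirely labeled $\alpha$ and already containing all $\alpha$-leaves of $T$. It follows that the sorted condition is automatically satisfied at every sibling pair for every character except possibly $\sigma:=\irr(\mathbf{p})$, and --- because $T_{c_T(\sigma)}$ is likewise entirely labeled $\sigma$ --- even for $\sigma$ it can only fail at sibling pairs lying strictly inside $T_w$, where $w$ is the sibling of $c_T(\sigma)$. Hence, assuming $\sigma\neq\sim$ (the remaining case being immediate since then $T$ is already sorted), it suffices to rearrange $T_w$ into a configuration in which its $\sigma$-leaves occupy a \emph{staircase} position, via moves that fix the complement of $T_w$ pointwise.

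For the rearrangement I would argue by a potential-function descent on $T_w$. The key observation is that any subtree $T_u\subseteq T_w$ containing no $\sigma$-leaf is already internally sorted (each character in it fills out a full subtree), and, provided $T_u$ is not properly contained in some $T_{c_T(\alpha)}$, its root $u$ is $\mathbf{n}$-isolated in $T$ and therefore, by Proposition~\ref{prop:expandable-contractible-isolated} part (1), is either $\mathbf{n}$-expandable or $\mathbf{n}$-contractible with respect to $\{\troot_T\}$. Conversely, any subtree inside $T_w$ that carries a $\sigma$-leaf without being all of $T_w$ also meets $T_{c_T(\sigma)}$, so its root is \emph{not} $\mathbf{n}$-isolated and such a subtree is never eligible to be moved by Propositions~\ref{prop:2-swap} and~\ref{prop:4-swap}. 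The strategy is therefore to permute only the $\sigma$-free subtrees, sliding them toward one side so that the $\sigma$-leaves are forced into the complementary staircase: at each step pick two subtrees of equal depth inside $T_w$, one $\sigma$-free and $\mathbf{n}$-expandable, the other $\sigma$-free and $\mathbf{n}$-contractible, at small enough index-distance $d(\,\cdot\to\cdot\,)$ that Proposition~\ref{prop:2-swap} applies (or set up the children configuration demanded by Proposition~\ref{prop:4-swap} when a direct $2$-swap is obstructed), swap them, and check that a suitable count of ``$\sigma$-inversions'' strictly decreases. Since each such move is a permutation of $\mathbf{n}$-isolated vertices it preserves $(\mathbf{n},\mathbf{p})$-uniformity, and after finitely many moves one reaches a sorted $(\mathbf{n},\mathbf{p})$-uniform tree $T'$; chaining the connecting strings via Lemma~\ref{lem:string-graph} part (1) yields $T\to_\mathbf{n} T'$.

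The step I expect to be the main obstacle is the combinatorial bookkeeping in the second paragraph: one must verify that, whenever $T_w$ is not yet sorted, there is indeed a pair of $\sigma$-free subtrees of the correct type (one $\mathbf{n}$-expandable, one $\mathbf{n}$-contractible) at small enough distance for Proposition~\ref{prop:2-swap} or Proposition~\ref{prop:4-swap}, that the swaps can be sequenced so that every intermediate tree stays $(\mathbf{n},\mathbf{p})$-uniform (keeping each $c_{T^{(i)}}(\sigma)$ well-defined), and that the potential genuinely strictly decreases at each step. The edge cases --- when a candidate $\sigma$-free subtree lies inside some $T_{c_T(\alpha)}$, so that one should instead move $c_T(\alpha)$, and when $d(x_1\to y_1)$ is too large and one must detour through Proposition~\ref{prop:4-swap} --- are where the effort concentrates; the expandability and contractibility inputs themselves are supplied directly by Proposition~\ref{prop:expandable-contractible-isolated}.
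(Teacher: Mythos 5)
There is a genuine gap, and it is fatal to the approach rather than a bookkeeping issue. You correctly observe that, in the original frame $\mathbf{n}$ (with $\Sigma_\mathbf{n}=\Sigma$), any proper subtree of $T_w$ that carries a $\sigma$-leaf is not $\mathbf{n}$-isolated and hence cannot be a target of Propositions~\ref{prop:2-swap} or~\ref{prop:4-swap}. But your conclusion---that one should instead permute only the $\sigma$-free subtrees and ``slide them to one side so the $\sigma$-leaves are forced into the complementary staircase''---does not work, because those swaps leave the $\sigma$-leaves where they are. When you apply Proposition~\ref{prop:2-swap} (or~\ref{prop:4-swap}) to two $\mathbf{n}$-isolated vertices $x$ and $y$, the resulting tree differs from $T$ only inside $T_x\cup T_y$: leaves at every other index keep their labels (this is exactly the content of $\IND_\mathbf{n}(t_{T,T'})\subseteq I_x\cup I_y \bmod 2^{M_\mathbf{n}}$ in Definition~\ref{def:permutation}). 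If both $T_x$ and $T_y$ are $\sigma$-free, then after the swap they are still $\sigma$-free, so the set of leaf positions carrying label $\sigma$ is an invariant of your move set. Therefore, if $T_w$ starts with its $\sigma$-leaves in an unsorted position pattern, no sequence of your allowed swaps can reach a sorted tree; there is no potential function to descend because the obstruction does not change.

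The missing idea---and the crux of the paper's proof---is a relabeling trick that makes the $\sigma$-carrying subtrees movable. The paper introduces a fresh character $\bar\sigma\notin\Sigma$, replaces every $\sigma$ inside $T_w$ by $\bar\sigma$, and works with the extended alphabet $\bar\Sigma=\Sigma\cup\{\bar\sigma\}$ and the frame $\bar{\mathbf n}$ with $\bar\Sigma_{\bar{\mathbf n}}=\Sigma$ (so that $\bar\sigma\notin\bar\Sigma_{\bar{\mathbf n}}$). Because $\bar\sigma$ is no longer a frame character, its straddling a subtree boundary does not destroy $\bar{\mathbf n}$-isolation, and Proposition~\ref{prop:expandable-contractible-isolated} now applies to the $\bar\sigma$-carrying subtrees (giving both expandability and super-contractibility, since $\bar\sigma\notin\bar\Sigma_{\bar{\mathbf n}}$ and the remaining frame characters have exact dyadic densities inside $T_w$). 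With those subtrees now swappable, a backward induction on depth (Claim~\ref{claim:induction-sorted}) repeatedly pairs up the $\bar\sigma$-carrying vertices at each level as siblings using Proposition~\ref{prop:2-swap}, which genuinely reduces the number of ``bad'' vertices, and renaming $\bar\sigma$ back to $\sigma$ at the end yields the sorted $T'$. You also need to confirm that the connecting string obtained after relabeling $\bar\sigma\mapsto\sigma$ is still locally $\mathbf n$-compatible; this is the analogue of the check done in Case (II) of Proposition~\ref{prop:expandable-contractible-isolated} and in Scenario (I) of Step 3, and it is essential because in the relabeled string the $\sigma$-gaps combine contributions from what used to be two distinct characters. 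Your write-up, by restricting attention to $\sigma$-free subtrees, avoids that check but at the cost of being unable to move anything that matters.
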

\begin{proof}
  Let $\sigma = \irr(\mathbf{p})$, and let $\bar{\sigma}\neq\,\sim$ be a character not in $\Sigma$. To get a sorted $(\mathbf{n},\mathbf{p})$-uniform tree, we just need to make changes to vertices with label $\sigma$. So, if $\sigma =\ \sim$, we are done. Hence, we can assume $\sigma\in\Sigma$. Let $v = c_T(\sigma)$ (recall that $c_T(\sigma)$ is defined in Definition ~\ref{def:n-p-uniform-tree}), and let $w$ be $v$'s sibling. In $T_w$, change all $\sigma$'s to $\bar{\sigma}$ and call the new tree $\bar{T}$.
  
  Let $\bar{\Sigma} = \Sigma \cup \{\bar{\sigma}\}$, $\bar{\Sigma}_{\bar{\mathbf{n}}} = \Sigma$ and $\bar{\mathbf{n}}\in\N^{\bar{\Sigma}_{\bar{\mathbf{n}}}}$ be equal to $\mathbf{n}$, and $\bar{\mathbf{p}}=d_{\bar{T}}$. Let $\bar{z}\in \bar{T}$ be the corresponding vertex to $z\in T$ for any $z\in T$, i.e.\ $I_{\bar{z}}=I_z$. Obviously $\bar{\mathbf{p}}\in \PBA_{\bar{\mathbf{n}}}$, $\irr(\bar{\mathbf{p}})=\ \sim$, and $\bar{T}$ is an $(\bar{\mathbf{n}},\bar{\mathbf{p}})$-uniform tree. Moreover, for a vertex $\bar{u}\in \bar{T}_{\bar{w}}$, if $\bar{\sigma}$ appears in the subtree of $\bar{T}$ rooted at $\bar{u}$'s parent, then $\bar{u}$ is $\bar{\mathbf{n}}$-isolated in $\bar{T}$. So, by Proposition~\ref{prop:expandable-contractible-isolated} we have the following.

  \begin{claim}
  \label{claim:sorted-every-vertex}
    For any $\bar{u}\in \bar{T}_{\bar{w}}$ we have:
    \begin{enumerate}
      \item If $\bar{\sigma}$ appears in $\bar{T}_{\bar{u}}$, then $\bar{u}$ is both $\bar{\mathbf{n}}$-expandable and $\bar{\mathbf{n}}$-super-contractible with respect to $\{\bar{w}\}$.
      \item If $\bar{u}$ is $\bar{\mathbf{n}}$-isolated in $\bar{T}$, then $\bar{u}$ is either $\bar{\mathbf{n}}$-expandable or $\bar{\mathbf{n}}$-contractible with respect to $\{\bar{w}\}$.
    \end{enumerate}
  \end{claim}

  By backward induction on $i=M_\mathbf{n},M_\mathbf{n}-1,\ldots,\depth_w$ we prove the following.
  \begin{claim}
  \label{claim:induction-sorted}
    For $i=M_\mathbf{n},M_\mathbf{n}-1,\ldots,\depth_w$, there is an $(\bar{\mathbf{n}},\bar{\mathbf{p}})$-uniform tree $\bar{T}^{(i)}$ with the following properties.
    \begin{itemize}
        \item $\bar{T}\to_{\bar{\mathbf{n}}} \bar{T}^{(i)}$ with a connecting string $t_{\bar{T},\bar{T}^{(i)}}$ such that the active indices for $(t_{\bar{T},\bar{T}^{(i)}})_{\bar{\mathbf{n}}}$ is a subset of $I_w$.
        \item For $j=i,i+1,\ldots,M_\mathbf{n}$, for at most one of vertices of depth $j$ in $\bar{T}^{(i)}$, say $\bar{u}^{(i)}$, we have that $\bar{\sigma}$ appears in $\bar{T}^{(i)}_{\bar{u}^{(i)}}$ but $\bar{u}^{(i)}$ is not labeled $\bar{\sigma}$ in $\bar{T}^{(i)}$.
    \end{itemize}
  \end{claim}
  \begin{proof}
    For $i=M_\mathbf{n}$, $\bar{T}^{(i)}=\bar{T}$ would work.\\
    For $i<M_\mathbf{n}$, let $\bar{T}^{(i+1)}$ be the resulting tree for $i+1$, which exists by the induction hypothesis, and let $\bar{w}^{(i+1)}\in \bar{T}^{(i+1)}$ be the corresponding vertex to $w\in T$. Enumerate all vertices of depth $i+1$ in $\bar{T}^{(i+1)}$ which have $\bar{\sigma}$ appearing in their subtree by $\bar{u}^{(i+1)}_1,\ldots,\bar{u}^{(i+1)}_{k-1},\bar{u}^{(i+1)}_k$ in a way that $\bar{u}^{(i+1)}_1,\ldots,\bar{u}^{(i+1)}_{k-1}$ are all labeled $\bar{\sigma}$ in $\bar{T}^{(i+1)}$. Note that $\bar{u}^{(i+1)}_1,\ldots,\bar{u}^{(i+1)}_{k-1},\bar{u}^{(i+1)}_k$ are necessarily located in the subtree of $\bar{T}^{(i+1)}$ rooted at $\bar{w}^{(i+1)}$.\\
    Let $\bar{x}^{(i+1)}_1$ be $\bar{u}^{(i+1)}_1$'s sibling. Using Proposition~\ref{prop:2-swap} and Claim~\ref{claim:sorted-every-vertex} we can swap $\bar{x}^{(i+1)}_1$ and $\bar{u}^{(i+1)}_1$ with a connecting string that satisfies the first condition of this claim. So, without loss of generality, we may assume that for $\bar{T}^{(i+1)}$ we can have either one of the following properties:
    \begin{itemize}
        \item $d(\bar{u}^{(i+1)}_2\to \bar{x}^{(i+1)}_1)\leq 1/2\ \diff_{I_{\bar{u}^{(i+1)}_2}}$, or
        \item $d(\bar{x}^{(i+1)}_1\to \bar{u}^{(i+1)}_2)\leq 1/2\ \diff_{I_{\bar{x}^{(i+1)}_1}}$.
    \end{itemize}
    Again, using Proposition~\ref{prop:2-swap}, Claim~\ref{claim:sorted-every-vertex}, and one of the above assumptions, we can swap $\bar{u}^{(i+1)}_2$ and $\bar{x}^{(i+1)}_1$ with a connecting string that satisfies the first condition of this claim. So, again, without loss of generality, we may assume that $\bar{u}^{(i+1)}_1$ and $\bar{u}^{(i+1)}_2$ are siblings in $\bar{T}^{(i+1)}$.\\
    By continuing this process we can assume that $\bar{u}^{(i+1)}_1$ and $\bar{u}^{(i+1)}_2$ are siblings in $\bar{T}^{(i+1)}$, $\bar{u}^{(i+1)}_3$ and $\bar{u}^{(i+1)}_4$ are siblings in $\bar{T}^{(i+1)}$, $\bar{u}^{(i+1)}_5$ and $\bar{u}^{(i+1)}_6$ are siblings in $\bar{T}^{(i+1)}$, and so on. So for each of $\bar{u}^{(i+1)}_1,\ldots,\bar{u}^{(i+1)}_{k-1}$, its parent has label $\bar{\sigma}$. Then $\bar{T}^{(i)} = \bar{T}^{(i+1)}$ works.
  \end{proof}
  
  Let $k=\depth_w$, and let $\bar{T}^{(k)}$ be the $(\bar{\mathbf{n}},\bar{\mathbf{p}})$-uniform tree that is given to us by Claim ~\ref{claim:induction-sorted} for $i=k$. It is not difficult to see that $\bar{T}^{(k)}$ is sorted. We know that $\bar{T}\to_{\bar{\mathbf{n}}} \bar{T}^{(k)}$ with a connecting string $t_{\bar{T},\bar{T}^{(k)}}$ such that the active indices for $(t_{\bar{T},\bar{T}^{(k)}})_{\bar{\mathbf{n}}}$ is a subset of $I_w$.
    
  Let $T'$ be equal to $\bar{T}^{(k)}$ with all labels $\bar{\sigma}$ replaced by $\sigma$. $T'$ is a sorted $(\mathbf{n},\mathbf{p})$-uniform tree.
  Let $t$ be equal to $t_{\bar{T},\bar{T}^{(k)}}$ with all $\bar{\sigma}$'s replaced by $\sigma$. It is easy to see that $\tstring(T)\to_{\mathbf{n}, t}\tstring(T')$, so $T\to_\mathbf{n} T'$.
  This completes the proof.
\end{proof}

So, we can assume that $T$ is sorted.

\subsubsection{Step 3}

So far, we simplified the general case so that we can make the following assumptions: $T$ is a sorted $(\mathbf{n}, \mathbf{p})$-uniform tree, and $\mathbf{p}$ and $\mathbf{p}'$ are adjacent in $\Gamma_\mathbf{n}$. Since $\mathbf{p},\mathbf{p}'$ are adjacent in $\Gamma_\mathbf{n}$, there are characters $\alpha, \beta\in\Sigma$ such that $\mathbf{p}_\alpha > \mathbf{p}'_\alpha$, $\mathbf{p}_\beta < \mathbf{p}'_\beta$, $\mathbf{p}_\sigma = \mathbf{p}'_\sigma$ for $\sigma\in\Sigma\setminus\{\alpha,\beta\}$, and $\irr(\mathbf{p}),\irr(\mathbf{p}')\in \{ \sim,\alpha,\beta \}$.

In this step, we introduce two general scenarios, and by using Proposition~\ref{prop:2-swap} for scenario (I) and Proposition~\ref{prop:4-swap} for scenario (II), we will show how to get an $(\mathbf{n}, \mathbf{p}')$-uniform tree $T'$ with $T\to_\mathbf{n} T'$. In the next step, we will show how to reduce the general case to one of these scenarios in different cases.

Now we introduce the two scenarios. Let $v_\alpha=c_T(\alpha)$ and $w_\alpha$ be $v_\alpha$'s sibling. Let $v_\beta=c_T(\beta)$ and $w_\beta$ be $v_\beta$'s sibling. Recall the definition of $c_T(\sigma)$ from Definition ~\ref{def:n-p-uniform-tree}. Let $\bar{\alpha}, \bar{\beta}\neq\;\sim$ be two characters not in $\Sigma$.

\vspace{0.3cm}
{\em Scenario (I):
  Let $\bar{\Sigma} = \Sigma \cup \{\bar{\alpha}, \bar{\beta}\}$, $\bar{\Sigma}_{\bar{\mathbf{n}}} = \Sigma$ and $\bar{\mathbf{n}}\in \N^{\bar{\Sigma}_{\bar{\mathbf{n}}}}$ be equal to $\mathbf{n}$.
  Assume that we have $x\in T_{w_\alpha}$ and $y\in T_{w_\beta}$ with the same depth. Let $\bar{T}$ be equal to $T$ everywhere except that all $\alpha$'s in $\bar{T}_{\bar{x}}$ are changed to $\bar{\alpha}$, and all the $\beta$'s in $\bar{T}_{\bar{y}}$ are changed to $\bar{\beta}$, where for each $v\in T$ we denote its corresponding vertex in $\bar{T}$ by $\bar{v}$.\\
  Moreover, assume that $\bar{y}$ is $\bar{\mathbf{n}}$-isolated in $\bar{T}$, and for $\bar{\mathbf{p}}=d_{\bar{T}}$ we have $\bar{\mathbf{p}}_{\bar{\alpha}} - \bar{\mathbf{p}}_{\bar{\beta}} = \mathbf{p}_\alpha-\mathbf{p}'_\alpha = \mathbf{p}'_\beta-\mathbf{p}_\beta.$}
\begin{enumerate}
  \item Let $z$ be the parent of $x$.
  
  \item Note that in this scenario, in addition to $\bar{y}$, which we assume is $\bar{\mathbf{n}}$-isolated in $\bar{T}$, $\bar{x}$ is also $\bar{\mathbf{n}}$-isolated in $\bar{T}$. Moreover, $\bar{T}$ is $(\bar{\mathbf{n}},\bar{\mathbf{p}})$-uniform.
  
  \item Since (i) we have at least one occurrence of $\bar{\alpha}$ in $\bar{T}_{\bar{x}}$, (ii) $\bar{x}$ is $\bar{\mathbf{n}}$-isolated in $\bar{T}$, and (iii) $\bar{T}$ is $(\bar{\mathbf{n}},\bar{\mathbf{p}})$-uniform and $\irr(d_{\bar{T}})$ does not appear in $\bar{T}_{\bar{x}}$, by the third part of Proposition~\ref{prop:expandable-contractible-isolated} we get that $\bar{T}_{\bar{x}}$ is both $\bar{\mathbf{n}}$-expandable and $\bar{\mathbf{n}}$-super-contractible with respect to $\{\troot_{\bar{T}}\}$, and so, also with respect to $\{\bar{x},\bar{y}\}$.\\
  Similarly, $\bar{y}$ is either $\bar{\mathbf{n}}$-contractible or $\bar{\mathbf{n}}$-expandable with respect to $\{\bar{x},\bar{y}\}$.
 
  \item Similar to the proof of Proposition~\ref{prop:sorted-tree}, we can show that we can swap $\bar{x}$ and its sibling. So, without loss of generality we can assume either one of the following assumptions (but only one of them): (i) $d(\bar{x}\to \bar{y})\leq 1/2\ \diff_{I_x}$, or (ii) $d(\bar{y}\to \bar{x})\leq 1/2\ \diff_{I_y}$.\\
  More precisely, there is a tree $\bar{T}^{(*)}$ with $\bar{T}\to_{\bar{\mathbf{n}}} \bar{T}^{(*)}$, which is isomorphic to $\bar{T}$, and in $\bar{T}^{(*)}$ we have $d(\bar{x}^{(*)}\to \bar{y}^{(*)})\leq 1/2\ \diff_{I_x}$, where $\bar{x}^{(*)}$ and $\bar{y}^{(*)}$ are the images of $\bar{x}$ and $\bar{y}$ under the isomorphism of $\bar{T}$ and $\bar{T}^{(*)}$. Similarly, there is a tree $\bar{T}^{(+)}$ with $\bar{T}\to_{\bar{\mathbf{n}}} \bar{T}^{(+)}$, which is isomorphic to $\bar{T}$, and in $\bar{T}^{(+)}$ we have $d(\bar{y}^{(+)}\to \bar{x}^{(+)})\leq 1/2\ \diff_{I_y}$. Moreover, we can get each of these trees with a connecting string whose active indices is a subset of $I_z \mod 2^{M_\mathbf{n}}$.
 
  \item So, by Proposition~\ref{prop:2-swap}, and the previous two results, we can swap $\bar{x}$ and $\bar{y}$ in $\bar{T}$. Let $\bar{T}^{(0)}$ be the resulting tree. So we have $\bar{T}\to_{\bar{\mathbf{n}}} \bar{T}^{(0)}$ with a connecting string whose active indices is a subset of $I_z\cup I_y \mod 2^{M_\mathbf{n}}$. Let $t_{\bar{T},\bar{T}^{(0)}}$ be one such connecting string for $(\bar{T},\bar{T}^{(0)})_{\bar{\mathbf{n}}}$. So, if $I_{\bar{T},\bar{T}^{(0)}}$ is the set of active indices for $(t_{\bar{T},\bar{T}^{(0)}})_\mathbf{n}$ we have $I_{\bar{T},\bar{T}^{(0)}}\subseteq I_z\cup I_y \subseteq I_{w_\alpha} \cup I_{v_\alpha} \cup I_{w_\beta} \mod 2^{M_\mathbf{n}}$.
  
  Obviously $\bar{T}^{(0)}$ is an $(\bar{\mathbf{n}},\bar{\mathbf{p}})$-uniform tree. So, in particular, $d_{\bar{T}^{(0)}}=d_{\bar{T}}$.
 
  \item In $\bar{T}^{(0)}$ change all the $\bar{\alpha}$'s to $\beta$, and change all the $\bar{\beta}$'s to $\alpha$. Call the new tree $T'$. Note that for $\sigma\in\Sigma\setminus\{\alpha,\beta\}$ we have $d_{T'}(\sigma) = d_T(\sigma) = \mathbf{p}'_\sigma$, and
  \begin{align*}
    d_{T'}(\alpha) &= \frac{\text{num}_{T'}(\alpha)}{2^{M_\mathbf{n}}} =\frac{\text{num}_{\bar{T}^{(0)}}(\alpha)+\text{num}_{\bar{T}^{(0)}}(\bar{\beta})}{2^{M_\mathbf{n}}}\\
    &= \frac{\text{num}_T(\alpha)-\text{num}_{\bar{T}}(\bar{\alpha})+\text{num}_{\bar{T}}(\bar{\beta})}{2^{M_\mathbf{n}}}\\
    &= \mathbf{p}_\alpha - (\mathbf{p}_\alpha-\mathbf{p}'_\alpha) = \mathbf{p}'_\alpha,\\
    d_{T'}(\beta) &= \frac{\text{num}_{T'}(\beta)}{2^{M_\mathbf{n}}} =\frac{\text{num}_{\bar{T}^{(0)}}(\beta)+\text{num}_{\bar{T}^{(0)}}(\bar{\alpha})}{2^{M_\mathbf{n}}}\\
    &= \frac{\text{num}_T(\beta)-\text{num}_{\bar{T}}(\bar{\beta})+\text{num}_{\bar{T}}(\bar{\alpha})}{2^{M_\mathbf{n}}}\\
    &= \mathbf{p}_\beta - (\mathbf{p}_\beta-\mathbf{p}'_\beta) = \mathbf{p}'_\beta.
  \end{align*}
  So $d_{T'}=\mathbf{p}'$. It is not difficult to see that $T'$ is $(\mathbf{n},\mathbf{p}')$-uniform.
  
  \item We know that $t_{\bar{T},\bar{T}^{(0)}}\in\Sigma^{[k2^{M_\mathbf{n}}]}$ for some $k\in\N$. 
  Define $t_{T,T'}\in \Sigma^{[k2^{M_\mathbf{n}}]}$ as follows:
  $$ t_{T,T'}(i) =
  \begin{cases}
    t_{\bar{T},\bar{T}^{(0)}}(i)& \text{if } t_{\bar{T},\bar{T}^{(0)}}(i)\notin \{\bar{\alpha},\bar{\beta}\},\\
    \alpha& \text{if } t_{\bar{T},\bar{T}^{(0)}}(i)\in\{\bar{\alpha},\bar{\beta}\} \text{ and } i \in I_{w_\alpha}\cup I_{v_\alpha}\setminus I_y \mod 2^{M_\mathbf{n}},\\
    \beta& \text{if } t_{\bar{T},\bar{T}^{(0)}}(i)\in\{\bar{\alpha},\bar{\beta}\} \text{ and } i \in I_y \mod 2^{M_\mathbf{n}}.
  \end{cases}
  $$
  Since $I_{\bar{T},\bar{T}^{(0)}}\subseteq I_{w_\alpha} \cup I_{v_\alpha} \cup I_{w_\beta} \mod 2^{M_\mathbf{n}}$, $t_{T,T'}$ is well defined. It is not difficult to see that 
  \begin{itemize}
    \item $\Sigma_{t_{T,T'}} = \Sigma_{\tstring(T)} = \Sigma_{\tstring(T')}$,
    \item $t_{T,T'}$ is $\mathbf{n}$-compatible,
    \item $t_{T,T'}[\mathbf{n}, +1] = \tstring(T)$, and
    \item $t_{T,T'}[\mathbf{n}, -1] = \tstring(T')$.
  \end{itemize}
  It means that $T\to_{\mathbf{n},t_{T,T'}} T'$. Hence $T\to_\mathbf{n} T'$.
\end{enumerate}

\vspace{0.3cm}
{\em Scenario (II):
Assume that 
\begin{itemize}
  \item $x_1\in T\setminus T_{w_\beta}$ is an ancestor of $u_\alpha$, where $u_\alpha$ is the parent of $w_\alpha$ and $v_\alpha$,
  \item $y_2\in T_{w_\beta}$ and $y_1$ is the sibling of $y_2$.
  \item $x_1$, $y_1$, and $y_2$ have the same depth,
  \item $y_2$ is $\mathbf{n}$-isolated in $T$, and 
  \item $\beta$ appears in $T_{y_1}$.
\end{itemize}
}

\begin{enumerate}

  \item Let $x_2$ be $x_1$'s sibling, $x$ be the parent of $x_1$ and $x_2$, and $y$ be the parent of $y_1$ and $y_2$.
  
  \item Let $\bar{\Sigma}= \Sigma \cup \{\bar{\beta}\}$, $\bar{\Sigma}_{\bar{\mathbf{n}}} = \Sigma$ and $\bar{\mathbf{n}}\in\N^{\bar{\Sigma}_{\bar{\mathbf{n}}}}$ be equal to $\mathbf{n}$. Let $\bar{T}$ be equal to $T$ with the following exception: if $y_1\neq v_\beta$ change all the $\beta$'s in $\bar{T}_{\bar{y_1}}$ to $\bar{\beta}$, where $\bar{z}\in\bar{T}$ is the corresponding vertex to $z\in T$ for any $z\in T$. Let $\bar{\mathbf{p}}=d_{\bar{T}}$. Obviously $\bar{T}$ is an $(\bar{\mathbf{n}},\bar{\mathbf{p}})$-uniform tree.
  
  \item Since $y_2\in T_{w_\beta}$ is $\mathbf{n}$-isolated in $T$, the only character that could appear both in $T_{y_1}$ and outside of $T_{y_1}$ is $\beta$. So $\bar{y_1}$ is $\bar{\mathbf{n}}$-isolated in $\bar{T}$. Also, since $y_2$ is $\mathbf{n}$-isolated in $T$, we get that $\bar{y_2}$ is $\bar{\mathbf{n}}$-isolated in $\bar{T}$.\\
  Since $\bar{x_1}, \bar{x_2} \notin \bar{T}_{\bar{w_\beta}}$, we get that $\bar{x_1}$ and $\bar{x_2}$ are $\bar{\mathbf{n}}$-isolated in $\bar{T}$.
  
  \item By the second part of Proposition~\ref{prop:expandable-contractible-isolated}, we can see that $\bar{x_1}$ is $\bar{\mathbf{n}}$-expandable with respect to $\{\troot_{\bar{T}}\}$, and so, with respect to $\{\bar{x},\bar{y}\}$.\\
  Similarly, by the third part of Proposition~\ref{prop:expandable-contractible-isolated}, $\bar{y_1}$ is $\bar{\mathbf{n}}$-contractible with respect to $\{\bar{y}\}$.
  
  \item The previous results show that the conditions for Proposition~\ref{prop:4-swap} hold here. So, there is a permutation $\pi$ of $\bar{V}=\{\bar{x_1},\bar{x_2},\bar{y_1},\bar{y_2}\}$ with $\pi(\bar{x_1}),\pi(\bar{y_1})\in\{\bar{y_1},\bar{y_2}\}$, and an $\mathbf{n}$-compatible tree $\bar{T}'$ such that
  \begin{itemize}
    \item $\bar{T}'_{\bar{v}'}\cong \bar{T}_{\pi^{-1}(\bar{v})}$ for $\bar{v}\in \bar{V}$, where $\bar{w}'\in \bar{T}'$ is the corresponding vertex to $w$ for any $w\in T$, and
    \item $\bar{T}\to_\mathbf{n} \bar{T}'$ and for some connecting string $t_{\bar{T},\bar{T}'}$ for $(\bar{T},\bar{T}')_{\bar{\mathbf{n}}}$ we have $\IND_{\bar{\mathbf{n}}}(t_{\bar{T},\bar{T}'}) \subseteq \cup_{\bar{v}\in \bar{V}} I_{\bar{v}} = \cup_{v\in V}I_v \mod 2^{M_\mathbf{n}}$ and $\IND_{\bar{\mathbf{n}}}(t_{\bar{T},\bar{T}'}, \bar{y_1}) \subseteq I_{\bar{y}} = I_y \mod 2^{M_\mathbf{n}}$. In particular, for 
    $$J_{\bar{\beta}} = \{l\in\domain(t_{\bar{T},\bar{T}'})\;|\; t_{\bar{T},\bar{T}'}(l) = \bar{\beta}\},$$
    we have $J_{\bar{\beta}}\subseteq I_y \mod 2^{M_\mathbf{n}}$.
  \end{itemize}
  
  \item Let $T'$ be equal to $\bar{T}'$ with all the $\bar{\beta}$'s replaced by $\beta$, and $t_{T,T'}$ be equal to $t_{\bar{T},\bar{T}'}$ with all the $\bar{\beta}$'s replaced by $\beta$. It is straightforward to check that $T'$ is an $(\mathbf{n},\mathbf{p})$-uniform tree and 
  \begin{itemize}
    \item $\Sigma_{t_{T,T'}} = \Sigma_{\tstring(T)} = \Sigma_{\tstring(T')}$,
    \item $t_{T,T'}$ is $\mathbf{n}$-compatible,
    \item $t_{T,T'}[\mathbf{n}, +1] = \tstring(T)$, and
    \item $t_{T,T'}[\mathbf{n}, -1] = \tstring(T')$.
  \end{itemize}
  It means that $T\to_{\mathbf{n}, t_{T,T'}} T'$. So, $T\to_\mathbf{n} T'$.
  
  \item In $T'$, let $v_\alpha'=c_{T'}(\alpha)$, $v_\beta'=c_{T'}(\beta)$, $w_\alpha'$ be $v_\alpha'$'s sibling, and $w_\beta'$ be $v_\beta'$'s sibling. We can see that $v_\alpha',w_\alpha'\in T'_{w_\beta'}$.\\
  Let $k = (\mathbf{p}_\alpha-\mathbf{p}'_\alpha)2^{M_\mathbf{n}}$. Obviously we have at least $k$ appearances of $\alpha$ in $T_{w_\alpha'}$. Choose $k$ leaves in $T_{w_\alpha'}$ with label $\alpha$ and change their labels to $\beta$. Call the new tree $T''$. Obviously $T'\to_\mathbf{n} T''$ (because $\tstring(T')\wedge_\mathbf{n}\tstring(T'')$ works as a connecting string), so we have $T\to_\mathbf{n} T''$. It is straightforward to see that $T''$ is an $(\mathbf{n},\mathbf{p}')$-uniform tree.
\end{enumerate}

\subsubsection{Step 4}

In this step, based on the values of $\irr(\mathbf{p})$ and $\irr(\mathbf{p}')$ and the structure of $T$, we will consider different cases and see that each case falls into one of the two scenarios we introduced in the previous step. That will conclude the proof of Lemma~\ref{lem:n-p-good-strings}.

Consider the following cases:

\vspace{0.3cm}
{\bf Case 1: $\irr(\mathbf{p}')\neq \alpha$.}

Let $k = \text{num}_{T_{w_\alpha}}(\alpha)$. In $T$ we have to change the labels of $k$ leaves with label $\alpha$ to $\beta$. So, in $T_{w_\beta}$ we have at least $k$ leaves with labels different from $\beta$.

Let $\Sigma_\mathbf{m} = \Sigma\setminus \{\beta\}$ and $\mathbf{m}$ be equal to the restriction of $\mathbf{n}$ to $\Sigma_\mathbf{m}$. We say that a vertex $v\in T_{w_\beta}$ is {\em interesting} if
\begin{itemize}
  \item $\beta$ appears in the sibling of $v$,
  \item $v$ is $\mathbf{m}$-isolated, and
  \item $\text{num}_{T_v}(\beta)\leq \abs{I_v} - k$.
\end{itemize}
If $w_\beta$ is not interesting, then $\alpha$ appears in $T_{w_\beta}$. Since $T$ is sorted, we get that all the $\alpha$'s in $T_{w_\alpha}$ are in $T_{w_\beta}$. In this case, change all the $\alpha$'s in $T_{w_\alpha}$ to $\beta$ and call the new tree $T'$. It is easy to see that $T'$ satisfies the conditions we want.\\
So, assume that $w_\beta$ is interesting. So, there exists a deepest interesting vertex $b$ in $T_{w_\beta}$. We claim that $\beta$ does not appear in $T_b$. If $\beta$ appears in $T_b$, then $\irr(\mathbf{p}) = \beta$, which means that $w_\alpha$ is labeled $\alpha$ in $T$, therefore $k$ is a power of two. Since $T$ is sorted, if $k$ is a power of two, $T_b$ does not have any $\beta$'s. This is a contradiction. So, $\beta$ does not appear in $T_b$. This means that $b$ is $\mathbf{n}$-isolated.

Since $T$ is sorted, there is a vertex $a\in T$ with the same depth as $b$ such that all the $\alpha$'s in $T_{w_\alpha}$ are also in $T_a$. If $a\in T_{w_\beta}$, change all the $\alpha$'s in $T_a$ to $\beta$ and call the new tree $T'$. It is easy to see that $T'$ satisfies the conditions we want. So, we can assume $a\in T\setminus T_{w_\beta}$. Consider the following two cases.
\begin{itemize}
  \item $\depth_a < \depth_{w_\alpha}$: In this case we fall into scenario (II). Obviously $a\in T\setminus T_{w_\beta}$ is an ancestor of the parent of $v_\alpha$ and $w_\alpha$, and $b\in T_{w_\beta}$, and $a,b$ have the same depth. Moreover, $b$ is $\mathbf{n}$-isolated in $T$ and since $b$ is interesting, $\beta$ appears in the sibling of $b$. So, $x_1=a, y_2=b$ works for scenario (II).
  \item $\depth_a \geq \depth_{w_\alpha}$: So $a\in T_{w_\alpha}$ and $b\in T_{w_\beta}$. $x=a, y=b$ works for scenario (I).
\end{itemize}

\vspace{0.3cm}
{\bf Case 2:} $\irr(\mathbf{p}') = \alpha$.\\
Let $k = \abs{I_{w_\beta}} - \text{num}_{T_{w_\beta}}(\beta)$. In $T$ we have to change the labels of $k$ leaves with label $\alpha$ to $\beta$. So, in $T_{w_\beta}$ we exactly $k$ leaves with labels different from $\beta$. Let $b$ be the deepest vertex in $T_{w_\beta}$ with $\text{num}_{T_b}(\beta)= \abs{I_b} - k$. Similarly, in $T_{w_\alpha}$ we have at least $k$ leaves with label $\alpha$. Let $a$ be the deepest vertex in $T_{w_\alpha}$ with $\text{num}_{T_a}(\alpha) \geq k$. Since $T$ is sorted, it is not difficult to see that $a$ and $b$ have the same depth.
\begin{itemize}
  \item If $\beta$ appears in $T_b$, it means that $\irr(\mathbf{p})=\beta$, which implies that $w_\alpha$ is labeled with $\alpha$, and hence $a$ is labeled $\alpha$. So $\text{num}_{T_a}(\alpha) = \abs{I_a} = \abs{I_b}$. We also know that $\text{num}_{T_b}(\beta) = \abs{I_b} - k$. So $\text{num}_{T_a}(\alpha) - \text{num}_{T_b}(\beta) = k$.
  \item If $\beta$ does not appear in $T_b$, it means that $k=\abs{I_b}$. So $a$ is labeled $\alpha$ in $T$. So $\text{num}_{T_a}(\alpha) - \text{num}_{T_b}(\beta) = \abs{I_b} - 0 = k$
\end{itemize}
It is straightforward to see that $x=a, y=b$ works for scenario (I).

\vspace{1cm}
\section{Proof of Theorem ~\ref{thm:eps-quasi-regular}}

In this section we prove Theorem~\ref{thm:eps-quasi-regular}. The proof uses a series of reductions.


\begin{definition}[Colorings]
A {\em coloring} of $\N$ is a family $\{f_c\}_{c \in C}$ of strictly increasing functions from $\N$ to $\N$ such that for every $m \in \N$, there are unique $c \in C$ and $i \in \N$ for which $f_c(i) = m$. We refer to the set $C$ as the set of colors.
\end{definition}

A coloring $\{f_c\}_{c\in C}$ of $\N$ corresponds to a sequence $s\in C^\N$: for $m\in \N$, $s(m) = c$ where $c$ is the unique color such that $m\in f_c(\N)$. Using this correspondence, we can define {\em well-distributed} colorings: a coloring $\{f_c\}_{c\in C}$ is well-distributed if the corresponding sequence $s\in C^\N$ is well-distributed in the sense of Definition~\ref{def:strings}. If the coloring $\{f_c\}_{c\in C}$ is well-distributed, define its density to be equal to the density function of the corresponding sequence.

A strictly increasing function $f\colon\N\to\N$ is called {\em well-distributed} if the following limit exists 
$$d(f) \coloneqq \lim_{n \rightarrow \infty}{\frac{\abs{f^{-1}\big([n]\big)}}{n}}.$$
Note that a coloring $\{f_c\}_{c\in C}$ is well-distributed if and only if each $f_c$ is well-distributed. In this case, $d(f_c)$ is equal to $d_s(c)$, where $d_s$ is the density function of the corresponding sequence $s$.

\begin{definition}[$k$-quasi-regularity]
\label{def:k-q-r}
For $k\in\N$, the {\em $k$-quasi-regularity} of a strictly increasing function $f \colon \N \to \N$, denoted by $\text{QR}_k(f)$, is the supremum across $q,r \geq k$ and $n,m \in \N$ of
$$\frac{q}{r} \frac{f(n+r) - f(n)}{f(m+q) - f(m)}.$$
\end{definition}

For a strictly increasing function $f:\N\to\N$, we define the quasi-regularity of $f$ as
$\text{QR}(f) \coloneqq \sup_{n,m\in\N} \frac{f(n+1)-f(n)}{f(m+1)-f(m)}$.
It is not difficult to prove the following claim.

\begin{claim}
\label{claim:min-gap}
  Let $f\colon \N\to\N$ be strictly increasing. The following hold.
  \begin{enumerate}
    \item $\text{QR}(f) = \text{QR}_1(f)$.
    \item If, in addition, $f$ is well-distributed, then
    $$\min_{n\in\N} f(n+1)-f(n) \geq \frac{1}{d(f) \text{QR}(f)}.$$
  \end{enumerate}
\end{claim}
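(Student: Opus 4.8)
The plan is to introduce the shorthand $\delta_k \coloneqq f(k+1)-f(k)$ for $k\in\N$, $g_{\min}\coloneqq\inf_k\delta_k$, and $g_{\max}\coloneqq\sup_k\delta_k$. Since every $\delta_k$ is a positive integer, $g_{\min}$ is a positive integer attained by some gap, so $\min_{n\in\N}\bigl(f(n+1)-f(n)\bigr)=g_{\min}$, and directly from the definition $\text{QR}(f)=g_{\max}/g_{\min}$. For part (1), the bound $\text{QR}_1(f)\geq\text{QR}(f)$ is obtained by taking $q=r=1$ in Definition~\ref{def:k-q-r}. For the reverse bound I would observe that for any $q,r\geq 1$ and $n,m\in\N$ the quantities $\frac1r\bigl(f(n+r)-f(n)\bigr)=\frac1r\sum_{j=0}^{r-1}\delta_{n+j}$ and $\frac1q\bigl(f(m+q)-f(m)\bigr)=\frac1q\sum_{j=0}^{q-1}\delta_{m+j}$ are averages of, respectively, $r$ and $q$ of the gaps $\delta_k$, hence both lie in $[g_{\min},g_{\max}]$; therefore
$$\frac{q}{r}\cdot\frac{f(n+r)-f(n)}{f(m+q)-f(m)}=\frac{\frac1r\bigl(f(n+r)-f(n)\bigr)}{\frac1q\bigl(f(m+q)-f(m)\bigr)}\leq\frac{g_{\max}}{g_{\min}}=\text{QR}(f),$$
and taking the supremum over $q,r\geq1$ and $n,m\in\N$ gives $\text{QR}_1(f)\leq\text{QR}(f)$ (if $g_{\max}=\infty$ both sides already equal $\infty$).

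For part (2) I would first record a general fact: since $f$ is strictly increasing, $f(1)<f(2)<\cdots<f(n)$ are exactly the elements of $f(\N)$ not exceeding $f(n)$, so $\abs{f^{-1}\bigl([f(n)]\bigr)}=n$; as $f(n)\to\infty$, well-distributedness of $f$ gives
$$\frac{n}{f(n)}=\frac{\abs{f^{-1}\bigl([f(n)]\bigr)}}{f(n)}\longrightarrow d(f)\quad\text{as }n\to\infty,$$
equivalently $f(n)/n\to 1/d(f)$. On the other hand $f(n)=f(1)+\sum_{j=1}^{n-1}\delta_j\leq f(1)+(n-1)g_{\max}$, so $f(n)/n\leq f(1)/n+g_{\max}$ and hence $1/d(f)=\lim_n f(n)/n\leq g_{\max}$, i.e.\ $d(f)\,g_{\max}\geq 1$. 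Since $\text{QR}(f)=g_{\max}/g_{\min}$ and $g_{\min}>0$, the inequality $d(f)\,g_{\max}\geq 1$ is equivalent to $g_{\min}\geq\frac{1}{d(f)\,\text{QR}(f)}$, which — recalling $g_{\min}=\min_{n\in\N}\bigl(f(n+1)-f(n)\bigr)$ — is exactly part (2).

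I expect no real obstacle: part (1) is a one-line averaging estimate and part (2) is the elementary fact that the density of $f$ equals the reciprocal of its asymptotic slope. The only point needing a word of care is the degenerate case $g_{\max}=\infty$ (equivalently $\text{QR}(f)=\infty$): note that $g_{\max}<\infty$ already forces $d(f)>0$, so this case has $d(f)=0$ or $d(f)>0$; if $d(f)>0$ the right-hand side $\frac{1}{d(f)\,\text{QR}(f)}$ is $0$ and the inequality is trivial, and the remaining sub-case $d(f)=0$ makes the right-hand side an indeterminate expression that does not arise in the applications (there the relevant functions have positive density). I would simply note this at the end of the proof.
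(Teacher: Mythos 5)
Your proof is correct. Since the paper omits the proof of this claim (it only remarks that it is ``not difficult to prove''), there is nothing to compare against, but the argument you give is the natural one and I see no gaps: for part (1), writing $\tfrac{1}{r}\bigl(f(n+r)-f(n)\bigr)$ and $\tfrac{1}{q}\bigl(f(m+q)-f(m)\bigr)$ as averages of consecutive gaps and squeezing them into $[g_{\min},g_{\max}]$ is exactly the right move, and the inequality $\text{QR}_1(f)\geq\text{QR}(f)$ from $q=r=1$ is immediate. For part (2), the chain $n/f(n)\to d(f)$ (a subsequence of the limit defining well-distributedness, using $|f^{-1}([f(n)])|=n$ for strictly increasing $f$), then $f(n)/n\leq f(1)/n+g_{\max}$, then $g_{\max}\geq 1/d(f)$, then $g_{\min}=g_{\max}/\text{QR}(f)\geq 1/(d(f)\text{QR}(f))$, is airtight when $g_{\max}<\infty$. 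Your handling of the degenerate cases is also correct: $g_{\max}<\infty$ forces $d(f)\geq 1/g_{\max}>0$, so the only genuinely indeterminate case is $g_{\max}=\infty$ with $d(f)=0$, where the right-hand side $\tfrac{1}{0\cdot\infty}$ has no unambiguous meaning; flagging that the inequality should be read as vacuous or as conventionally $0$ there (which is how the paper uses it in the proof of Claim~\ref{claim:g-sigma-coloring}, where $d(f)>0$ always) is a fair and honest note rather than a defect.
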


Let $\{f_c\}_{c\in C}$ be a coloring of $\N$ and $s\in C^\N$ be the corresponding sequence for this coloring. It is straightforward to see that $\text{QR}(s) = \sup_{c\in C} \text{QR}(f_c) = \sup_{c\in C} \text{QR}_1(f_c)$, where $\text{QR}(s)$ is defined in Definition~\ref{def:quasi-reg seq}.

The following lemma tells us how quasi-regularity behaves under composition. This lemma helps us prove Theorem~\ref{thm:eps-quasi-regular} by a series of reductions.

\begin{lemma}
\label{lemma:composition}
Let $f,g\colon \N\to\N$ be strictly increasing. Suppose that $g(n+1)-g(n)\geq k/l$ for all $n\in\N$, with $k,l\in \N$. Then
$$\text{QR}_l(f \circ g) \leq \text{QR}_k(f) \cdot \text{QR}_l(g).$$
\end{lemma}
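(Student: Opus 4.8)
The plan is to unwind the definition of $\text{QR}_l$ for the composition $f\circ g$ and factor the relevant ratio into a product of two factors, one of which is controlled by $\text{QR}_k(f)$ and the other by $\text{QR}_l(g)$. By Definition~\ref{def:k-q-r}, $\text{QR}_l(f\circ g)$ is the supremum over $q,r\geq l$ and $n,m\in\N$ of
$$\frac{q}{r}\cdot\frac{(f\circ g)(n+r)-(f\circ g)(n)}{(f\circ g)(m+q)-(f\circ g)(m)} = \frac{q}{r}\cdot\frac{f(g(n+r))-f(g(n))}{f(g(m+q))-f(g(m))}.$$
The natural move is to write $a = g(n)$, $b = g(n+r)$, $c = g(m)$, $d = g(m+q)$, so that $b-a = g(n+r)-g(n)$ and $d-c = g(m+q)-g(m)$, and then insert these as the ``step counts'' $r' = b-a$, $q' = d-c$ for $f$. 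Concretely, I would write
$$\frac{q}{r}\cdot\frac{f(b)-f(a)}{f(d)-f(c)} = \underbrace{\left(\frac{q'}{r'}\cdot\frac{f(a+r')-f(a)}{f(c+q')-f(c)}\right)}_{\text{bounded by }\text{QR}_k(f)}\cdot\underbrace{\left(\frac{q}{r}\cdot\frac{r'}{q'}\right)}_{=\,\frac{q}{r}\cdot\frac{g(n+r)-g(n)}{g(m+q)-g(m)}},$$
and the second factor is bounded by $\text{QR}_l(g)$ directly from its definition since $q,r\geq l$.

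**The key step I expect to require care** is checking that the ``inner'' step counts $r' = g(n+r)-g(n)$ and $q' = g(m+q)-g(m)$ are both at least $k$, so that the first factor genuinely falls under the supremum defining $\text{QR}_k(f)$. This is exactly where the hypothesis $g(n+1)-g(n)\geq k/l$ enters: since $g$ is strictly increasing, telescoping gives
$$g(n+r)-g(n) = \sum_{j=0}^{r-1}\big(g(n+j+1)-g(n+j)\big) \geq r\cdot\frac{k}{l} \geq l\cdot\frac{k}{l} = k,$$
using $r\geq l$, and likewise for $q$. So $r',q'\geq k$, and the factorization is legitimate. One should also note $r',q'\geq 1$ so the denominators $f(c+q')-f(c)$ etc.\ are nonzero (strict monotonicity of $f$); and $r,q\geq l\geq 1$ so nothing degenerates.

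**Assembling the bound:** taking the supremum, the first factor over all valid choices is at most $\text{QR}_k(f)$ and the second is at most $\text{QR}_l(g)$, so each term in the supremum defining $\text{QR}_l(f\circ g)$ is at most $\text{QR}_k(f)\cdot\text{QR}_l(g)$, giving the claim. The only genuine obstacle is the bookkeeping in the previous paragraph — making sure the inner parameters land in the right range — and that is handled cleanly by the telescoping estimate. I would present the proof as: (1) expand the definition and introduce $a,b,c,d$; (2) prove $g(n+r)-g(n)\geq k$ and $g(m+q)-g(m)\geq k$ via telescoping; (3) perform the algebraic factorization and bound each factor; (4) take suprema.
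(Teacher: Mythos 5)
Your proposal matches the paper's proof essentially line for line: both set $r' = g(n+r)-g(n)$ and $q' = g(m+q)-g(m)$, use the hypothesis to deduce $r',q'\geq k$, factor the ratio into the two pieces governed by $\text{QR}_k(f)$ and $\text{QR}_l(g)$, and take suprema. The reasoning and the careful check that the inner step counts land in the right range are correct and identical to the paper's argument.
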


\begin{proof}
Fix $n,m\in\N$ and $r,q\in\N$ with $r,q\geq l$, we need to show that: 
$$\frac{q}{r}\frac{f(g(n+r))-f(g(n))}{f(g(m+q))-f(g(m))} \leq \text{QR}_k(f) \cdot \text{QR}_l(g).$$
Since $g(n+1)-g(n)\geq k/l$ for all $n\in\N$, it follows that $g(n+r)-g(n) \geq r\frac{k}{l} \geq k$ and $g(m+q)-g(m) \geq q\frac{k}{l} \geq k$.
So, $g(n+r) = g(n) + r'$ and $g(m+q) = g(m) + q'$ with $r', q'\geq k$.
\begin{align*}
  \frac{q}{r} \frac{f(g(n+r))-f(g(n))}{f(g(m+q))-f(g(m))} & = \frac{q}{r} \frac{f(g(n)+r')-f(g(n))}{f(g(m)+q')-f(g(m))} \\
  & = \left(\frac{q}{r} \frac{r'}{q'} \right) \left( \frac{q'}{r'} \frac{f(g(n)+r')-f(g(n))}{f(g(m)+q')-f(g(m))} \right) \\
  & = \left(\frac{q}{r} \frac{g(n+r)-g(n)}{g(m+q)-g(m)}\right) \left( \frac{q'}{r'} \frac{f(g(n)+r')-f(g(n))}{f(g(m)+q')-f(g(m))} \right)\\
  & \leq \text{QR}_l(g) \cdot \text{QR}_k(f)
\end{align*}
\end{proof}

The following lemma is an immediate corollary of the main result in ~\cite{tijdeman1973distribution}, which we will use later in some of the proofs.

\begin{lemma}
\label{lem:one-sided-to-two-sided}
Suppose $\Sigma$ is a countable alphabet and $\mathbf{p}$ is a probability distribution on $\Sigma$. Then there is an infinite sequence $s\in\Sigma^\N$ with:
\begin{enumerate}
    \item $d_s = \mathbf{p}$ and
    \item for all $M,N\in\N$, and $\sigma\in\Sigma$
    \begin{align*}
        \abs{ \frac{ \abs{s^{-1}(\{\sigma \}) \cap [M, M+N)} }{N} - \mathbf{p}_\sigma } < \frac{2}{N}
    \end{align*}
\end{enumerate}
\end{lemma}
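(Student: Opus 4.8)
The plan is to take the sequence produced by Tijdeman's theorem in \cite{tijdeman1973distribution} and convert its one-sided count estimates (over initial segments $[n]$) into the desired two-sided estimate (over an arbitrary window $[M,M+N)$) by a single telescoping subtraction. The one point that needs care is that Tijdeman's theorem must be invoked in its sharp form: it produces an $s\in\Sigma^\N$ with $d_s=\mathbf{p}$ such that for every $n\in\N$ and $\sigma\in\Sigma$ the count $\abs{s^{-1}(\{\sigma\})\cap[n]}$ equals $\floor{n\mathbf{p}_\sigma}$ or $\ceil{n\mathbf{p}_\sigma}$. Equivalently, writing $A_\sigma(n)=\abs{s^{-1}(\{\sigma\})\cap[n]}$ for $n\geq1$ and $A_\sigma(0)=0$, one has $A_\sigma(n)-n\mathbf{p}_\sigma\in(-1,1)$ for all $n\geq0$: the interval is open because the integer $A_\sigma(n)$ cannot equal $n\mathbf{p}_\sigma\pm1$ unless $n\mathbf{p}_\sigma\in\Z$, in which case $A_\sigma(n)=n\mathbf{p}_\sigma$ and the difference is $0$. (The weaker ``discrepancy $\leq1$'' paraphrase quoted in the introduction would only yield $\leq2/N$ below, not the strict bound, so this sharper form is genuinely needed.)

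Next I would record the elementary window identity. For $M,N\in\N$ we have $[M,M+N)=\{M,\ldots,M+N-1\}$ and $[M-1]\subseteq[M+N-1]$, hence
\begin{equation*}
  \abs{s^{-1}(\{\sigma\})\cap[M,M+N)} \;=\; A_\sigma(M+N-1)-A_\sigma(M-1),
\end{equation*}
where for $M=1$ the right-hand side reads $A_\sigma(N)-A_\sigma(0)=A_\sigma(N)$, consistent with $[1,N+1)=[N]$.

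Finally I would substitute $A_\sigma(M+N-1)=(M+N-1)\mathbf{p}_\sigma+\varepsilon_1$ and $A_\sigma(M-1)=(M-1)\mathbf{p}_\sigma+\varepsilon_2$ with $\varepsilon_1,\varepsilon_2\in(-1,1)$, which gives $\abs{s^{-1}(\{\sigma\})\cap[M,M+N)}=N\mathbf{p}_\sigma+(\varepsilon_1-\varepsilon_2)$ with $\varepsilon_1-\varepsilon_2\in(-2,2)$; dividing by $N$ yields
\begin{equation*}
  \abs{\frac{\abs{s^{-1}(\{\sigma\})\cap[M,M+N)}}{N}-\mathbf{p}_\sigma}<\frac{2}{N},
\end{equation*}
which is the estimate in (2), while (1) is part of Tijdeman's conclusion (or follows by dividing the floor/ceiling bound by $n$ and letting $n\to\infty$). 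There is no substantive obstacle here: the whole content is a two-line telescoping argument on top of the quoted theorem, and the only things to get right are the half-open index bookkeeping and citing Tijdeman's result in the sharp floor/ceiling form rather than the discrepancy-$\leq1$ paraphrase.
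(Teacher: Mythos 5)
Your proof is correct and follows essentially the same route as the paper's: invoke Tijdeman's theorem for the strict one-sided bound $\abs{A_\sigma(K)-K\mathbf{p}_\sigma}<1$, then telescope over $K=M-1$ and $K=M+N-1$. Your remark that the strict form (counts equal to floor or ceiling) is genuinely needed to obtain the strict $<2/N$ is a fair point; the paper's proof does cite Tijdeman with the strict $<1$, even though the introduction loosely paraphrases it as ``$\leq 1$.''
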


\begin{proof}
  By ~\cite{tijdeman1973distribution}*{Theorem 1}, there exists a sequence $s\in\Sigma^\N$ with $d_s = \mathbf{p}$ and 
  \begin{align}
  \label{eq:ref_thm}
    \abs{ \abs{s^{-1}(\{\sigma \}) \cap [K]} - K \mathbf{p}_\sigma } < 1
  \end{align}
  for all $K\in\N$ and $\sigma\in\Sigma$.
  
  To complete the proof, we need to show 
  $$\abs{ \frac{ \abs{s^{-1}(\{\sigma \}) \cap [M, M+N)} }{N} - \mathbf{p}_\sigma } < \frac{2}{N}$$
  for all $N\in\N$, $M\in\N$, and $\sigma\in\Sigma$.
  
  If $M = 1$, the result follows from putting $K = N$ and dividing both sides of ~\eqref{eq:ref_thm} by $N$. So, let $M\geq 2$. Inequality ~\eqref{eq:ref_thm} for $K = M-1$ and $K = N+M-1$ gives us
  \begin{align*}
    & \abs{ \abs{s^{-1}(\{\sigma \}) \cap [M-1]} - (M-1) \mathbf{p}_\sigma } < 1,\\
    & \abs{ \abs{s^{-1}(\{\sigma \}) \cap [N+M-1]} - (N+M-1) \mathbf{p}_\sigma } < 1.
  \end{align*}
  From these two bounds, we get
  $$\abs{ \abs{s^{-1}(\{\sigma \}) \cap [M, M+N)} - N \mathbf{p}_\sigma } < 2,$$
  which completes the proof.
\end{proof}

Now, we proceed to consider some special cases, and show that how, in each case, we can get a partial result, i.e.\ a coloring with small quasi-regularity for most of the colors. Later, we will combine these partial results and prove Theorem ~\ref{thm:eps-quasi-regular}.

\subsection{Case I} Here we assume that the probabilities in the distribution are almost equal.

\begin{proposition}
\label{prop:big_1}
Suppose $n>2^9$ and $(p_1, \ldots, p_n)$ is a probability distribution with $p_1 \geq p_2 \geq \dots \geq p_n$, $p_1 / p_n \leq 2$. Then, there exists a coloring $\{ f_i \}_{i \in \{ 1,\ldots,n \} }$ of $\N$ such that for $i\in\{ 1,\ldots,n \}$
\begin{enumerate}
    \item $d(f_i) = p_i$, and
    \item $\text{QR}_1(f_i) < 1 + 64 \sqrt{\frac{\log n}{n}}$.
\end{enumerate}
\end{proposition}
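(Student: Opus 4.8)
The goal is to build, for a near-uniform distribution $(p_1,\dots,p_n)$ with $p_1/p_n \le 2$ and $n$ large, $n$ increasing functions $f_i\colon\N\to\N$ that together tile $\N$, with $d(f_i)=p_i$ and each $\text{QR}_1(f_i)$ within a $1+64\sqrt{(\log n)/n}$ factor of $1$. Equivalently (via the coloring/sequence correspondence), I want a sequence $s\in\{1,\dots,n\}^\N$ with density $\mathbf p$ in which every color $i$ appears with consecutive-gap ratio close to $1$. The natural strategy, as the ``Main Techniques'' section hints, is a Hall's Marriage theorem argument in the spirit of \cite{kempe2018quasi}*{Theorem 6.1}: work in a periodic block of length $L$ (for a suitable large $L$), and for each color $i$ decide it should occupy $\approx p_i L$ slots that are near-equally spaced with ideal spacing $\approx 1/p_i$. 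The deviation $64\sqrt{(\log n)/n}$ should come from choosing $L$ on the order of $n^2/\log n$ or so, and allowing each color's occurrences to land in a small window around an arithmetic-progression ``target''.

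\textbf{Key steps.} First I would fix a period $L$ and for each color $i$ set $m_i$ to be the nearest integer to $p_i L$, so $\sum m_i$ is within $n$ of $L$; adjust a few $m_i$ by $\pm 1$ (this perturbs densities by $O(1/L)$, negligible if $L \gg n/\eps$, but actually since we need \emph{exact} density $p_i$ in the limit I'll instead interleave blocks of slightly different sizes, or just note that a coloring with the right asymptotic density suffices and handle exactness by a Tijdeman-type two-sided estimate as in Lemma~\ref{lem:one-sided-to-two-sided}). The heart is: within one period, place the $m_i$ copies of color $i$ at positions forming a near-arithmetic progression with common difference in $\{\lfloor L/m_i\rfloor, \lceil L/m_i\rceil\}$, so that color $i$'s max gap over min gap is already at most $\lceil L/m_i\rceil/\lfloor L/m_i\rfloor \le 1 + 1/\lfloor L/m_i\rfloor \le 1 + O(p_i) \le 1 + O(1/n)$ — wait, that's too strong for free, so the real constraint is the \emph{packing}: can all $n$ colors simultaneously be assigned disjoint slots while each stays near its target progression? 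This is where Hall's theorem enters. Build a bipartite graph between (color, occurrence-index) pairs and the $L$ slots, where a pair $(i,j)$ is adjacent to a window of $w$ slots around $j\cdot L/m_i$; a perfect matching exists if every set of pairs has enough slot-neighbors, which I verify by a counting/expansion estimate — the window width $w$ needed is where $\sqrt{(\log n)/n}$ appears, presumably via a union bound over ``bad'' intervals (a Chernoff-type argument over the random-like placement, or a direct deficiency-version Hall count). From the matching, a slot assigned to $(i,j)$ is within $w$ of $jL/m_i$, so consecutive gaps of color $i$ lie in $[L/m_i - 2w,\ L/m_i + 2w]$, giving $\text{QR}_1(f_i) \le \frac{L/m_i + 2w}{L/m_i - 2w} \le 1 + \frac{4w m_i/L}{1 - 2wm_i/L} \le 1 + O(w/n)$ using $m_i/L \approx p_i \le 2/n$; choosing $w \asymp \sqrt{n\log n}$ and $L \asymp n^2$ lands the bound below $1 + 64\sqrt{(\log n)/n}$.

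\textbf{Main obstacle.} The delicate part is the Hall's-theorem verification: I need the window width $w$ to be both small enough that $w/n \to 0$ at rate $\sqrt{(\log n)/n}$ and large enough that the marriage condition holds for \emph{all} color sets simultaneously. The danger is local clustering — several colors whose target progressions happen to be nearly aligned could collectively demand more slots in some interval than are available in the union of their windows. I would handle this with a deficiency form of Hall's theorem, reducing to: for every interval $I\subseteq[L]$, the number of (color, index) pairs whose window lies inside $I$ is at most $|I|$. Since pair $(i,j)$'s target is $jL/m_i$ and there are $m_i$ of them equally spaced by $\approx L/m_i$, the count of color-$i$ pairs with target in $I$ is $|I|m_i/L + O(1) = |I|p_i + O(1)$, summing to $|I| + O(n)$ over all colors; the window-inclusion (needing the whole window of radius $w$ inside $I$) only \emph{decreases} the count, by at most $2w\cdot p_i$ per color, i.e. $2wn \cdot \max p_i \asymp w$ total — wait, I need the count with windows \emph{intersecting} $I$ to be at most $|I|$, not windows inside $I$; this is the standard subtlety and the reason $w$ can't be too large (the $+O(n)$ slack must absorb the $+O(wn\max p_i)$ boundary term, forcing $w = O(1/\max p_i) = O(n)$, consistent with our choice $w\asymp\sqrt{n\log n} = o(n)$). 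A clean way to finish is to choose the target progressions with random offsets and apply a Chernoff bound plus union bound over the $O(L^2)$ intervals, which is exactly where the $\sqrt{\log n}$ enters; I expect the write-up to do precisely this probabilistic-existence argument rather than a fully explicit construction. Finally, turning the single-period construction into a genuine coloring of $\N$ with \emph{exact} densities is routine: concatenate copies of the period (giving a periodic, hence well-distributed, coloring with density exactly $m_i/L$), then correct $m_i/L$ to $p_i$ either by a limiting argument over growing $L$ combined with Lemma~\ref{lem:one-sided-to-two-sided}-style bookkeeping, or — since the proposition only asserts existence of \emph{some} coloring with these properties and $d(f_i)=p_i$ on the nose — by choosing $L$ divisible enough that $p_i L\in\N$ exactly when the $p_i$ are rational, and otherwise approximating and passing to a limit, a step I'd defer to a short lemma.
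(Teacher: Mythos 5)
Your overall plan is in the right spirit — Hall's theorem applied to a bipartite graph between (color, occurrence) pairs and period-slots, with random offsets and a union bound to make the deviation $\sqrt{(\log n)/n}$ appear, then a discrepancy-style argument to reach exact density. This matches the paper's skeleton, which indeed follows \cite{kempe2018quasi}*{Theorem 6.1} closely. But there are two interlocking technical moves that you defer to a ``short lemma'' and that are, in fact, the crux; without them the construction doesn't close.

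First, exact density for irrational $p_i$ cannot be achieved by a single period $L$: any periodic sequence has rational frequencies. Your alternative (``interleave blocks of slightly different sizes'') is the right direction, but it must be done in a specific way. The paper first rounds each $p_i$ to the interval $[k_i/n^4,(k_i+1)/n^4]$ and invokes \cite{kempe2018quasi}*{Lemma 4.3} to write $\mathbf p=\sum_{j=1}^r\alpha_j\mathbf q^j$ with $r\le n$, where every $\mathbf q^j$ has all coordinates equal to $k_i/n^4$ or $(k_i+1)/n^4$. Each $\mathbf q^j$ then admits an exact period-$n^4$ block $c^j$ with $q^j_i n^4$ slots of color $i$. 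The infinite coloring is the composition of these $r$ block types with a Tijdeman low-discrepancy word on $\{1,\dots,r\}$ of density $(\alpha_1,\dots,\alpha_r)$ (your Lemma~\ref{lem:one-sided-to-two-sided}). This is not bookkeeping you can skip: it is the mechanism that converts a finite convex decomposition into an exact-density infinite coloring.

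Second — and this is the point your sketch misses entirely — once you concatenate blocks of different types $c^j$ and $c^{j'}$, you must control the gap of color $i$ across the boundary, not just within each block. If the random offsets $\theta^j_i$ were chosen independently for each block type, the last occurrence of $i$ in $c^j$ and the first in $c^{j'}$ could be arbitrarily close or far, destroying the $\text{QR}_1$ bound. The paper's fix is to pick one set of offsets $(\eta_1,\dots,\eta_n)$ that simultaneously makes every $G^j_{\eta_1,\dots,\eta_n}$ have a perfect matching (Claim~\ref{claim:common matching}), and moreover to restrict $\eta_i\in[\delta,\,k_i/n^4-\delta]$ so that the first and last occurrences of color $i$ inside any block sit near the same positions mod the period. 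Achieving this requires a union bound over the $r\le n$ block types in addition to the per-block Hall failure probability, which is why the failure probability per block has to be pushed down to $1/n^8$ (Claim~\ref{claim:prob matching}) rather than merely $o(1)$. Your Chernoff-over-intervals heuristic gestures at the Hall verification but does not account for either the need for common offsets across block types or the offset-range restriction that controls boundary gaps; these are exactly where the argument would fail if written out as you describe.

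Everything else — using near-uniformity $p_1/p_n\le 2$ to bound $q^j_i\le 2/n$, the triangle-inequality estimate that places consecutive gaps in $[1/q^j_i-2\delta n^4,\,1/q^j_i+2\delta n^4]$ both within a block and across a boundary, and the final arithmetic landing at $1+64\sqrt{(\log n)/n}$ — follows your outline once those two pieces are in place. So: right framework, but the deferred lemma is the proof.
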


This proposition and its proof are very similar to ~\cite{kempe2018quasi}*{Theorem 6.1}.

\begin{proof}
Let $n>2^9$ and $\mathbf{p}=(p_1,\ldots,p_n)$. For $i=1,2,\ldots,n$, let $k_i\in\N\cup\{0\}$ be such that $p_i\in[\frac{k_i}{n^4},\frac{k_i+1}{n^4}]$. Since $p_1/p_n\leq 2$, we get that $p_i\in[\frac{1}{2n}, \frac{2}{n}]$ and $k_i\in[\floor{\frac{n^3}{2}}, 2n^3]$. By ~\cite{kempe2018quasi}*{Lemma 4.3}, there are probability distributions $\mathbf{q}^1,\ldots,\mathbf{q}^r$ and $0\leq\alpha_1,\ldots,\alpha_r\leq 1$ with:
\begin{enumerate}
    \item $\sum_{j=1}^{r} \alpha_j = 1$.
    \item $\sum_{j=1}^{r} \alpha_j \mathbf{q}^j = \mathbf{p}$.
    \item If $\mathbf{q}^j = (q^j_1,\ldots,q^j_n)$, we have $q^j_i\in[\frac{k_i}{n^4},\frac{k_i+1}{n^4}]$ for all $i=1,\ldots,n$.
    \item If $\mathbf{q}^j = (q^j_1,\ldots,q^j_n)$, we have $q^j_i\in\{\frac{k_i}{n^4},\frac{k_i+1}{n^4}\}$ for all but at most one $i=1,\ldots,n$.
\end{enumerate}
Item (4) implies that $q^j_i=\frac{l^j_i}{n^4}$ for some $l^j_i\in \Z$ for all $i=1,\ldots,n$, and item (3) shows that $l^j_i\in[k_i, k_i+1]$ for all $i=1,\ldots,n$. So, $\mathbf{p}$ is a convex combination of $\mathbf{q}^1,\ldots,\mathbf{q}^r$, where for each $\mathbf{q}^j=(q^j_1,\ldots,q^j_n)$ we have $q^j_i\in\{\frac{k_i}{n^4},\frac{k_i+1}{n^4}\}$ for all $i=1,\ldots,n$. Since all $\mathbf{q}^j$ are in a $(n-1)$-dimensional subspace of $\R^n$, we can further assume that $\mathbf{p}$ is in the convex hull of at most $n$ such points $\mathbf{q}^j$. So, without loss of generality, we can assume $r\leq n$.

For $j=1,\ldots,r$ and $i=1,\ldots,n$ let $\theta^j_i$ be independent uniform random variables with $\theta^j_i\in[0,q^j_i]$. For each $j=1,\ldots,r$, we build a random bipartite graph $G^j_{\theta^j_1,\ldots,\theta^j_n}$ as a function of $\theta^j_1,\ldots,\theta^j_n$ and for $\delta = \sqrt{\frac{2\log n}{n^7}}$ show
\begin{enumerate}
    \item Each $G^j_{\theta^j_1,\ldots,\theta^j_n}$ has a perfect matching with high probability,
    \item There are common values $\eta_1\in[\delta,\frac{k_1}{n^4}-\delta],\ldots,\eta_n\in[\delta,\frac{k_n}{n^4}-\delta]$ such that $G^j_{\eta_1,\ldots,\eta_n}$ has a perfect matching for all $j=1,\ldots,r$, and
    \item When for $\eta_1\in[\delta,\frac{k_1}{n^4}-\delta],\ldots,\eta_n\in[\delta,\frac{k_n}{n^4}-\delta]$ every $G^j_{\eta_1,\ldots,\eta_n}$ has a perfect matching, we would get a coloring $\{f_i\}_{i\in\{1,\ldots,n\}}$ of $\N$ with $d(f_i)=p_i$ such that $\text{QR}_1(f_i)$ is close to 1 for each $i=1,\ldots,n$.
\end{enumerate}

We start by defining the bipartite graphs $G^j_{\theta^j_1,\ldots,\theta^j_n}$. For simplicity, we denote the graph by $G^j$. Let $\delta = \sqrt{\frac{2\log n}{n^7}}$ and
\begin{align*}
    V &= \{0, 1,\ldots,n^4-1\}  \text{, and}\\
    W^j &= \{(i,\ell)\ | \ i=1,\ldots,n \text{ and } 0\leq \ell < q^j_i n^4\}.
\end{align*}
Define $G^j$ on the set of vertices $V\cup W^j$ by adding an edge between $t\in V$ and $(i,\ell)\in W^j$ if
\begin{align}
\label{eq:graph definition}
    d(\frac{t}{n^4},\, \theta^j_i + \frac{\ell}{q^j_i n^4}) \leq \delta,
\end{align} where $\theta^j_i + \frac{\ell}{q^j_i n^4}$ and $d(\cdot,\cdot)$ are calculated mod 1.

\begin{claim}
\label{claim:prob matching}
  Let $E^j$ be the event that $G^j_{\theta^j_1,\ldots,\theta^j_n}$ has no perfect matchings. Then probability of $E^j$ is at most $1/n^8$.
\end{claim}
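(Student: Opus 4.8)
The plan is to apply Hall's marriage theorem. Since $\sum_{i=1}^n q^j_i n^4=n^4$, both sides of $G^j$ have $n^4$ vertices, so $G^j$ has a perfect matching if and only if $\abs{N_{G^j}(S)}\geq\abs S$ for every $S\subseteq W^j$. Identify $V$ with the equally spaced points $\{t/n^4:t\in V\}$ of the circle $\R/\Z$ and each $(i,\ell)\in W^j$ with the point $x^j_{i,\ell}=\theta^j_i+\ell/(q^j_i n^4)$, so that $G^j$ connects $t$ and $(i,\ell)$ exactly when $x^j_{i,\ell}$ lies within distance $\delta$ of $t/n^4$. For a minimal Hall-violating set $S$ the $\delta$-neighborhoods of the points $\{x^j_{i,\ell}:(i,\ell)\in S\}$ must form a single arc $A=[a-\delta,b+\delta]$, where $a$ and $b$ are the smallest and largest of these points, hence themselves points attached to elements of $W^j$; then $N_{G^j}(S)=V\cap A$, and since $\abs{V\cap A}\geq(b-a+2\delta)n^4-1$ while $\abs S\leq\abs{W^j\cap[a,b]}$, a Hall violation forces $\abs{W^j\cap[a,b]}>(b-a)n^4+2\delta n^4-1$ for some such pair $a\leq b$. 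There are at most $n^8$ arcs $[a,b]$ to consider.

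Next I would fix one such arc $[a,b]$, of length $\lambda=b-a$, and bound the probability it causes a violation. For each color $i$ the points $\{x^j_{i,\ell}\}_\ell$ form a set of $q^j_i n^4$ equally spaced points of $\R/\Z$ rotated by the uniform amount $\theta^j_i\in[0,q^j_i]$, so $Z_i\coloneqq\abs{\{\ell:x^j_{i,\ell}\in[a,b]\}}$ always takes one of two consecutive integer values near $\lambda q^j_i n^4$; averaging the rotation over $[0,q^j_i]$, which spans $(q^j_i)^2 n^4\geq n^2/4$ periods of this grid, gives $\E{Z_i}\leq\lambda q^j_i n^4+4/n^2$. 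The $Z_i$ are independent (the $\theta^j_i$ are), each lies in an interval of length one, and $\abs{W^j\cap[a,b]}=\sum_{i=1}^n Z_i$, so $\E{\sum_{i=1}^n Z_i}\leq\lambda n^4+4/n$. Hence a violation at this arc forces $\sum_{i=1}^n Z_i$ to exceed its mean by at least $2\delta n^4-1-4/n$, and Hoeffding's inequality for a sum of $n$ independent random variables each confined to an interval of length one gives a probability at most $\exp\!\big(-2(2\delta n^4-1-4/n)^2/n\big)$. With $\delta=\sqrt{2\log n/n^7}$ the exponent equals $-16\log n$ up to lower-order terms, so this probability is essentially $n^{-16}$.

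Finally, a union bound over the at most $n^8$ arcs gives $\Pr{E^j}$ at most roughly $n^8\cdot n^{-16}=n^{-8}$, the hypothesis $n>2^9$ being exactly what absorbs the lower-order terms and makes the stated bound $1/n^8$ hold. I expect the main obstacle to be the combinatorial reduction of Hall's condition to the $O(n^8)$ arc test sets — arguing that a minimal violating set on the circle has a single-arc neighborhood, so that the relevant quantity is just $\abs{W^j\cap[a,b]}$ for $a,b$ ranging over the attached points — together with the bookkeeping that turns ``the number of points of a uniformly rotated equally spaced grid in a fixed arc'' into a sum of $n$ independent, nearly deterministic summands whose expectation is close enough to the ideal value not to consume the slack built into the choice of $\delta$.
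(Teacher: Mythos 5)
The paper does not actually prove this claim itself; it just asserts it follows from the proof of Lemma~6.4 of Kempe, Schulman, and Tamuz applied with $M=n^4$, so any self-contained argument is necessarily a different presentation of whatever is in that reference. Your reconstruction --- Hall's theorem on the circle, reduction of a minimal violating set to an arc test set parameterized by a pair of $W^j$-points (hence at most $n^8$ tests), a per-arc Hoeffding bound of roughly $n^{-16}$, and a union bound --- is the natural argument and almost certainly matches the cited proof in structure, so the claim is genuinely established by your outline.

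One imprecision is worth repairing before you call it a proof. The arc $[a,b]$ you ``fix'' is not a deterministic object: its endpoints $a=x^j_{i_0,\ell_0}$ and $b=x^j_{i_1,\ell_1}$ depend on $\theta^j_{i_0}$ and $\theta^j_{i_1}$, so the assertion that all $n$ of the $Z_i$ are independent and have mean close to $\lambda q^j_i n^4$ is not literally true as stated --- $\lambda$ is itself a function of two of the $\theta$'s. What you should fix is the pair of indices $\bigl((i_0,\ell_0),(i_1,\ell_1)\bigr)$, then condition on $\theta^j_{i_0}$ and $\theta^j_{i_1}$. Under that conditioning $\lambda$, $Z_{i_0}$, and $Z_{i_1}$ become deterministic, and Hoeffding applies to the remaining $n-2$ genuinely independent summands; the conditional threshold the remaining sum must exceed is still $2\delta n^4-O(1)$ once you use $Z_{i_0}+Z_{i_1}\leq\lambda(q^j_{i_0}+q^j_{i_1})n^4+2$, so the exponent is unchanged to leading order. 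The constants change slightly, which is exactly the kind of slack the hypothesis $n>2^9$ is there to absorb, but you need the conditioning to make the independence step correct.
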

\begin{proof}
  This claim follows from the proof of ~\cite{kempe2018quasi}*{Lemma 6.4} when $M=n^4$.
\end{proof}

\begin{claim}
\label{claim:common matching}
There are common values $\eta_1\in[\delta,\frac{k_1}{n^4}-\delta],\ldots,\eta_n\in[\delta,\frac{k_n}{n^4}-\delta]$ such that $G^j_{\eta_1,\ldots,\eta_n}$ has a perfect matching for all $j=1,\ldots,r$.
\end{claim}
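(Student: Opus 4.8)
The plan is to prove the claim by the probabilistic method, now drawing a \emph{single} random vector that is used for all $r$ graphs simultaneously. Let $\eta_1,\ldots,\eta_n$ be independent, with $\eta_i$ uniform on $[\delta,\frac{k_i}{n^4}-\delta]$, and show that with positive probability $G^j_{\eta_1,\ldots,\eta_n}$ has a perfect matching for every $j=1,\ldots,r$ at once; any realization witnessing this gives the desired common values.

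First I would record that this is well posed: since $q^j_i\in\{\frac{k_i}{n^4},\frac{k_i+1}{n^4}\}$ we have $\frac{k_i}{n^4}\le q^j_i$, so $[\delta,\frac{k_i}{n^4}-\delta]\subseteq[0,q^j_i]$ for every $j$; and since $n>2^9$, using $k_i\ge\lfloor n^3/2\rfloor$ and $\delta=\sqrt{2\log n/n^7}$ one checks $2\delta<\frac{k_i}{n^4}$, so each interval is nonempty. The core step is a change-of-measure estimate. Fix $j$, and let $A^j\subseteq\prod_{i}[0,q^j_i]$ be the (measurable) set of parameter vectors $(\theta_1,\ldots,\theta_n)$ for which $G^j_{\theta_1,\ldots,\theta_n}$ has no perfect matching; Claim~\ref{claim:prob matching} says exactly that $\mathrm{vol}(A^j)\le\frac{1}{n^8}\prod_i q^j_i$. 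Because the law of $(\eta_1,\ldots,\eta_n)$ has density $\big(\prod_i(\frac{k_i}{n^4}-2\delta)\big)^{-1}$ times the indicator of $\prod_i[\delta,\frac{k_i}{n^4}-\delta]$, which is contained in $\prod_i[0,q^j_i]$, the probability $p_j$ that $G^j_{\eta_1,\ldots,\eta_n}$ has no perfect matching satisfies
\[
p_j \;\le\; \frac{\mathrm{vol}(A^j)}{\prod_i\big(\tfrac{k_i}{n^4}-2\delta\big)} \;\le\; \frac{1}{n^8}\prod_{i=1}^n\frac{q^j_i}{\tfrac{k_i}{n^4}-2\delta}.
\]

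To finish I would estimate the product. Using $q^j_i n^4\le k_i+1$, $k_i\ge\lfloor n^3/2\rfloor$, and $\delta n^4=\sqrt{2n\log n}$, each factor is $\frac{q^j_i n^4}{k_i-2\delta n^4}\le\frac{k_i+1}{k_i-2\delta n^4}=1+O\!\big(\sqrt{\log n}/n^{5/2}\big)$, whence $\prod_{i=1}^n\frac{q^j_i}{\frac{k_i}{n^4}-2\delta}=1+O\!\big(\sqrt{\log n}/n^{3/2}\big)\le 2$ for $n>2^9$. Thus $p_j\le 2/n^8$, and a union bound over $j=1,\ldots,r$ together with $r\le n$ gives $\sum_{j=1}^r p_j\le 2/n^7<1$. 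Therefore some realization of $\eta_1,\ldots,\eta_n$ lies in the prescribed intervals and makes every $G^j_{\eta_1,\ldots,\eta_n}$ have a perfect matching, which is exactly the assertion of the claim.

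I expect the only real subtlety to be the bookkeeping in the change-of-measure inequality — keeping straight that each $G^j$ lives over its own box $\prod_i[0,q^j_i]$ while the common sample point is drawn from the shrunken box $\prod_i[\delta,\frac{k_i}{n^4}-\delta]$ contained in all of them — followed by the routine check, using the explicit value of $\delta$, that the resulting product of density-ratio factors is $1+o(1)$.
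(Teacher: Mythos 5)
Your proposal is correct and takes essentially the same approach as the paper: the probabilistic method with a single random vector drawn from the shrunken common box $\prod_i[\delta,\tfrac{k_i}{n^4}-\delta]$, an appeal to Claim~\ref{claim:prob matching} for each $G^j$, and a union bound over $j$. The only cosmetic difference is that the paper estimates $\Pr[\eta\in A^j]$ as the conditional probability $\Pr[\theta^j\in A^j \mid \theta^j\ \text{in range}]$ and bounds it by $\Pr[\theta^j\in A^j] + \Pr[\theta^j\ \text{out of range}]$, whereas you write the change of measure explicitly as a likelihood-ratio, which produces a marginally tighter numerical bound but rests on the identical mechanism.
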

\begin{proof}
To show the existence of such $\eta_1,\ldots,\eta_n$, by the union bound, we just need to show that 
\begin{align*}
    \sum_{j=1}^{r} \Pr{ \neg\left(\theta^j_1\in [\delta,\frac{k_1}{n^4}-\delta], \ldots, \theta^j_n\in [\delta,\frac{k_n}{n^4}-\delta]\right) \text{ or } E^j} < 1
\end{align*}
Note that 
\begin{align*}
    \Pr{\theta^j_i\in[\delta,\frac{k_i}{n^4}-\delta]}
    & = (\frac{k_i}{n^4} - 2\delta)/q^j_i \\
    & \geq (\frac{k_i}{n^4} - 2\delta)/ (\frac{k_i+1}{n^4}) = \frac{k_i - 2\delta n^4}{k_i+1} \\
    & \geq \frac{\floor{n^3/2} - \sqrt{8n\log n}}{\floor{n^3/2}+1} \\
    & \geq 1-\frac{4}{n^{9/4}},
\end{align*} so
\begin{align*}
    \Pr{ \theta^j_1\in [\delta,\frac{k_1}{n^4}-\delta], \ldots, \theta^j_n\in [\delta,\frac{k_n}{n^4}-\delta]]} &= \Pr{ \theta^j_1\in [\delta,\frac{k_1}{n^4}-\delta]} \cdots \Pr{\theta^j_n\in [\delta,\frac{k_n}{n^4}-\delta]]} \\
    &\geq (1-\frac{4}{n^{9/4}}) \cdots (1-\frac{4}{n^{9/4}})\\
    & = (1-\frac{4}{n^{9/4}})^n\\
    & \geq 1 - \frac{4}{n^{5/4}}.
\end{align*}
So,
\begin{align*}
    \Pr{ \neg\left(\theta^j_1\in [\delta,\frac{k_1}{n^4}-\delta], \ldots, \theta^j_n\in [\delta,\frac{k_n}{n^4}-\delta]\right)} < \frac{4}{n^{5/4}}.
\end{align*}
Moreover, from Claim~\ref{claim:prob matching} we know that $\Pr{E^j}\leq \frac{1}{n^8}$. So, we get
\begin{align*}
    \sum_{j=1}^{r}\quad & \Pr{ \neg\left(\theta^j_1\in [\delta,\frac{k_1}{n^4}-\delta], \ldots,  \theta^j_n\in [\delta,\frac{k_n}{n^4}-\delta]\right) \text{ or } E^j} \\
    & \leq \sum_{j=1}^r (\frac{4}{n^{5/4}} + \frac{1}{n^8}) \leq r(\frac{4}{n^{5/4}}+\frac{1}{n^8}) \leq n(\frac{4}{n^{5/4}}+\frac{1}{n^8}) < 1.
\end{align*}
\end{proof}

Next, we explain how we can get the desired coloring $\{f_i\}_{i\in\{1,\ldots,n\}}$ using the perfect matchings for graphs $G^j_{\eta_1,\ldots,\eta_n}$.

Recall that $V = \{0, 1,\ldots,n^4-1\}$. Any perfect matching of $G^j_{\eta_1,\ldots,\eta_n}$ gives us a string $c^j \in \{1,\ldots,n\}^V$ with the density function equal to $\mathbf{q}^j$ as follows. Let $M^j_{\eta_1,\ldots,\eta_n}$ be a fixed perfect matching of $G^j_{\eta_1,\ldots,\eta_n}$. Let $c^j:V \to \{1,2,\ldots,n\}$ be defined by $c^j(t) = i$ where $i$ is the unique element of $\{1,2,\ldots,n\}$ such that in the matching $M^j_{\eta_1,\ldots,\eta_n}$, $t\in V$ is connected to $(i,\ell)\in W^j$ for some $0\leq \ell < q^j_i n^4$. It is easy to see that
\begin{align}
\label{eq:r-colors density}
    \frac{\abs{(c^j)^{-1}(\{i\})}}{\abs{V}} = q^j_i.
\end{align}

Recall that
\begin{align}
\label{eq:dist combination}
    \mathbf{p} = \sum_{j=1}^r \alpha_j \mathbf{q}^j.
\end{align}
By Lemma~\ref{lem:one-sided-to-two-sided}, there exists a string $u\in \{1,\ldots,r\}^\N$ with
\begin{align}
\label{eq:color density}
    \abs{ \frac{\abs{u^{-1}(\{j\}) \cap [M,M+N) }}{N} - \alpha_j } \leq \frac{2}{N}
\end{align}
for all $M, N\in\N$, and $j=1,2,\ldots,r$.

Now, we can compose these strings $u,c^1,\ldots,c^r$ in the following way: let $s\in \{1,\ldots,n\}^\N$ be defined by $s((a-1) n^4 + t + 1) = c^{u(a)}(t)$ for $a\in\N$ and $t\in V=\{0, 1,\ldots,n^4-1\}$. From \eqref{eq:r-colors density}, \eqref{eq:dist combination}, and \eqref{eq:color density} we get
\begin{align}
\label{eq:uniform density}
    \lim_{N\to\infty} \sup_{M\in\N}
    \abs{ \frac{ \abs{ s^{-1}(\{i\}) \cap [M,M+N) } }{N} - p_i } = 0,
\end{align}
which means $d_s = \mathbf{p}$. Let $\{f_i\}_{i\in\{1,\ldots,n\}}$ be the coloring of $\N$ corresponding to $s$: for $i=1,\ldots,n$, let $b_1<b_2<\cdots$ be an enumeration of $s^{-1}(\{i\})$ and define $f_i\colon\N\to\N$ by $f_i(m) = b_m$ for all $m\in\N$. We obviously have $d(f_i) = p_i$ for $1\leq i\leq n$.

To find an upper bound for $\text{QR}_1(f_i)$, by our comment after Definition~\ref{def:k-q-r}, we need to have upper and lower bounds for $f_i(m+1)-f_i(m)$ for $m\in\N$. Let $f_i(m+1) = b' = (a'-1) n^4 + t' + 1$ and $f_i(m) = b = (a-1) n^4 + t + 1$ with $a,a'\in\N$ and $t,t'\in V = \{0,1,\ldots,n^4-1\}$. Note that we either have $a'=a$ or $a'=a+1$.

In the first case, where $a'=a$, let $j=u(a)$. We have $c^j(t) = i$ and $t'$ is the first $t<t''\in V$ with $c^j(t'') = i$. Recall that we fixed a perfect matching $M^j_{\eta_1,\ldots,\eta_n}$ of $G^j_{\eta_1,\ldots,\eta_n}$. Assume that in $M^j_{\eta_1,\ldots,\eta_n}$, $t\in V$ is matched with $(i,\ell)\in W^j$ and $t'\in V$ is matched with $(i,\ell')\in W^j$. Since $\eta_i\in[\delta,q^j_i-\delta]$, we obviously have $\ell'=\ell+1$. So, from \eqref{eq:graph definition} for $\ell$ and $\ell+1$, and by triangle inequality, we have:
$$1/q^j_i-2\delta n^4\leq \abs{t-t'} \leq 1/q^j_i+2\delta n^4,$$
which implies:
\begin{align}
\label{def:gap_case_1}
    1/q^j_i-2\delta n^4 \leq f_i(m+1)-f_i(m) \leq 1/q^j_i+2\delta n^4.
\end{align}

In the second case, where $a'=a+1$, let $j=u(a)$ and $j'=u(a')$. Since $\eta_i\in[\delta, q^j_i-\delta]$, we have that $t$ is the last $t''\in V$ with $c^j(t'')=i$ and $t'$ is the first $t''\in V$ with $c^{j'}(t'')=i$. So, from \eqref{eq:graph definition} and by triangle inequality, if $D$ is the distance between $t$ and $t'$ mod $n^4$, we have:
$$1/q^j_i-2\delta n^4 \leq D \leq 1/q^j_i+2\delta n^4.$$
Note that $f_i(m+1) - f_i(m) = a'\ n^4 + t' - a\ n^4 - t = n^4 + t' - t = D$, so we have:
\begin{align}
\label{def:gap_case_2}
    1/q^j_i-2\delta n^4 \leq f_i(m+1)-f_i(m) \leq 1/q^j_i+2\delta n^4.
\end{align}

So, by our comment after Definition ~\ref{def:k-q-r}, and since $q^j_i\leq \frac{2}{n}$, $n>2^9$, and $\delta = \sqrt{\frac{2\log n}{n^7}}$, we have:
\begin{align*}
    \text{QR}_1(f_i)
    & = \frac{\sup_{m\in\N} f_i(m+1)-f_i(m)}{\inf_{m\in\N} f_i(m+1)-f_i(m)}\\
    & \leq \frac{1/q^j_i+2\delta n^4}{1/q^j_i-2\delta n^4} = 1 + \frac{4\delta n^4}{1/q^j_i-2\delta n^4}\\
    & \leq 1 + \frac{4 \sqrt{2 n \log n}}{n/2 - 2 \sqrt{2 n \log n}}\\
    & \leq 1 + 64 \sqrt{\frac{\log n}{n}}
\end{align*}

This completes the proof of Proposition~\ref{prop:big_1}.
\end{proof}

\subsection{Case II} Here we assume that all but one probability in the distribution are very small. Before proving the main proposition for this case, we need to prove the following lemma.

\begin{lemma}
\label{lemma:small-intermediate}
Let $(p_0, p_1, p_2, \dots)$ be a probability distribution, $0 < \eps < 1/10$, and $p_0 > 1-\eps$. Then, there is a coloring $\{f_i\}_{i\in\N\cup\{0\}}$ of $\N$ such that:
\begin{enumerate}
    \item $d(f_i) = p_i$ for $i=0,1,2,\ldots$.
    \item $\text{QR}(f_i) \leq 1+10\eps$ for $i=1,2,\ldots$.
\end{enumerate}
\end{lemma}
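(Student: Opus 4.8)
The plan is to put each small character on its own nearly-periodic ``Beatty'' schedule and then resolve the (rare) collisions between these schedules by moving values a bounded amount. Write $q=1-p_0=\sum_{i\ge 1}p_i$, so that $q<\eps<1/10$ and hence $1/p_i\ge 1/q>10$ for every $i\ge 1$. For $i\ge 1$ let $g_i\colon\N\to\N$, $g_i(k)=\ceil{k/p_i}$; this is strictly increasing, $d(g_i)=p_i$, every gap $g_i(k+1)-g_i(k)$ lies in $\{\floor{1/p_i},\ceil{1/p_i}\}$, and consequently $\text{QR}(g_i)\le \ceil{1/p_i}/\floor{1/p_i}\le 1+\frac{1}{\floor{1/p_i}}\le 1+\frac{p_i}{1-p_i}\le 1+10\eps$ (using $p_i\le q<\eps<1/10$ and that $\floor{1/p_i}>1/p_i-1$). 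If the ranges $g_1(\N),g_2(\N),\dots$ happened to be pairwise disjoint we would be finished: set $f_i=g_i$ for $i\ge 1$ and let $f_0$ enumerate the complement, whose density is automatically $p_0$.

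To handle the collisions, fix for each $i\ge 1$ a displacement budget $C_i=\floor{\eps/p_i}-1\ge 0$; this is $\ge 1$ for all but at most one $i$, namely the at most one index with $p_i>\eps/2$ (there are fewer than $2q/\eps<2$ of those). I would then invoke Hall's theorem for the bipartite graph whose left vertices are the demands $(i,k)$ ($i\ge 1$, $k\in\N$), whose right vertices are the positions $m\in\N$, and in which $(i,k)$ is joined to every $m$ with $\abs{m-g_i(k)}\le C_i$. Every left vertex has degree at most $2C_i+1<\infty$, so it suffices to verify Hall's condition for finite sets of demands; a matching saturating the demand side then yields, for each $i\ge 1$, a strictly increasing $\tilde g_i$ with $\abs{\tilde g_i(k)-g_i(k)}\le C_i$ and with the ranges $\tilde g_i(\N)$ pairwise disjoint. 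Setting $f_i=\tilde g_i$ for $i\ge 1$ and letting $f_0$ be the increasing enumeration of $\N\setminus\bigcup_{i\ge1}f_i(\N)$ gives the desired coloring: from $\abs{\tilde g_i(k)-k/p_i}\le C_i+1\le \eps/p_i$ one gets $\bigl|\,\abs{f_i^{-1}([n])}-p_i n\,\bigr|=O(1)$, hence $d(f_i)=p_i$ for $i\ge 1$ and therefore $d(f_0)=p_0$; and writing $\tilde g_i(k)=k/p_i+\theta_k$ with $\abs{\theta_k}\le\eps/p_i$, every gap of $f_i$ equals $1/p_i+(\theta_{k+1}-\theta_k)\in[(1-2\eps)/p_i,(1+2\eps)/p_i]$, whence $\text{QR}(f_i)\le\frac{1+2\eps}{1-2\eps}\le 1+10\eps$ (the last inequality is $20\eps^2\le 6\eps$, valid since $0<\eps<1/10$).

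The step I expect to be the main obstacle is the verification of Hall's condition: for a finite set $D$ of demands one must show that the union of the intervals $[g_i(k)-C_i,g_i(k)+C_i]$, $(i,k)\in D$, has at least $\abs D$ elements. The delicate case is a ``resonance'', where many rare characters place demands near a common near-multiple of the numbers $1/p_i$, so that $\abs D$ is large although $D$ occupies a small footprint; this is precisely compensated by the fact that such a character's budget $C_i$ is of order $\eps/p_i$ and so grows as $p_i$ shrinks, but turning this heuristic into a clean uniform bound — say an estimate, valid for every window, on the number of demands it contains in terms of its length and $q$ — is where the real work lies. One also needs the routine observations that each demand is joined to only finitely many positions (so that the countable form of Hall's theorem applies; here $\eps<1/10$ makes each character contribute at most one demand within distance $C_i$ of a given position) and that, for the at most one character with $C_i=0$, its singleton demand sets trivially satisfy Hall's condition.
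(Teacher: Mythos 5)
Your architecture matches the paper's: a bipartite graph with color demands on one side and positions on the other, edges given by short intervals around ideal locations, Hall's theorem to extract a saturating matching, and then the leftover positions enumerated as $f_0$. The density and quasi-regularity verifications at the end are also the same routine estimates. However, you explicitly defer the verification of Hall's condition, and that deferral is not a minor bookkeeping matter: it is the only nontrivial step in the whole proof, and the proposal as written therefore has a genuine gap.

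The paper closes that gap cleanly and it is worth seeing how. It uses the deficiency form of Hall's condition: for a finite set $A$ of positions, let $B_i$ be the set of color-$i$ demands whose \emph{entire} neighborhood lies in $A$. Two facts are used: (a) for fixed $i$, distinct demands $(i,n)$ and $(i,n')$ have disjoint neighborhoods (this is where $\eps<1/10$ enters, since consecutive centers are $\approx 1/p_i$ apart and the half-widths are $\approx \eps/(2p_i)$); and (b) every neighborhood of a color-$i$ demand has size at least $\eps/p_i$ (the paper takes the neighborhood of $(i,n)$ to be $\left[\floor{(n-\eps/2)/p_i},\ \ceil{(n+\eps/2)/p_i}\right]$, which contains at least $\eps/p_i$ integers). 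Together, (a) and (b) give $\abs{B_i}\le\abs{A}\,p_i/\eps$, and summing over $i$ yields $\abs{B}\le\abs{A}\sum_{i\ge1}p_i/\eps=\abs{A}\,q/\eps\le\abs{A}$, which is Hall's condition. There is no need for a window-by-window ``resonance'' estimate; the whole thing is a one-line sum once (a) and (b) are in place.

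One more concrete problem with your setup: your budgets $C_i=\floor{\eps/p_i}-1$ are too small to let this argument run. You need the degree of each color-$i$ demand to be at least $\eps/p_i$ so that $\abs{B_i}\le\abs{A}\,p_i/\eps$. Your degree is $2C_i+1=2\floor{\eps/p_i}-1$, and when $\eps/2<p_i<\eps$ this equals $1<\eps/p_i$, so the bound on $\abs{B_{i}}$ for that (at most one) index degenerates to $\abs{B_i}\le\abs{A}$ and the sum $\sum_i\abs{B_i}$ can exceed $\abs{A}$. Replacing your neighborhood of $(i,k)$ by an interval of $\ge\eps/p_i$ integers around $k/p_i$ (as in the paper) repairs this and makes the Hall verification immediate; with that change, the rest of your writeup (strict monotonicity of the $\tilde g_i$, the density of $f_0$, and the gap estimate giving $\text{QR}(f_i)\le 1+10\eps$) goes through as you outlined.
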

\begin{proof}
Similar to the proof of Proposition ~\ref{prop:big_1}, we define a bipartite graph and use Hall's Marriage theorem to show that it has a perfect matching. Let
$$X = \N\times\N = \{(i,n)\ | \ i\in\N, n\in\N\},$$
and $Y = \N$. Define a bipartite graph on $X\cup Y$ by putting an edge between $x = (i,n)\in X$ and $y\in Y$ whenever $\floor{\frac{n-\eps/2}{p_i}}\leq y\leq \ceil{\frac{n+\eps/2}{p_i}}$.
Let $A$ be a finite subset of $Y$. Fix $i\in\N$.
\begin{itemize}
  \item Note that since $\eps < 1/10$, neighbors of vertices in $X_i = \{(i,n)\in X \ | \ n\in\N\}$ are disjoint.
  \item If $x = (i,n) \in X$, we have $\deg(x) = \ceil{\frac{n+\eps/2}{p_i}} - \floor{\frac{n-\eps/2}{p_i}} + 1 \geq \eps/p_i$.
  \item Let $B_i\subseteq X_i$ be the set of vertices that are only connected to vertices in $A$. Then, we have $\abs{B_i} \leq \frac{\abs{A}}{\eps/p_i} = \abs{A} \frac{p_i}{\eps}$.
\end{itemize}
Let $B\subseteq X$ be the set of vertices that are only connected to vertices in $A$. We obviously have $B = \cup_{i\in\N} B_i$, and
$$\abs{B} = \sum_{i\in\N} \abs{B_i} \leq \abs{A} \sum_{i\in\N}\frac{p_i}{\eps} \leq \abs{A}.$$
So, by Hall's Marriage theorem, there is an injective map, $f\colon X \to Y$ such that $f$ sends every element $x\in X$ to a neighboring element in $y\in Y$.   

Fix $i\in\N$. Let $f_i\colon\N\to\N$ be defined by $f_i(n) = f((i,n))$ for $n\in\N$. For $N\in\N$, we have
$$(N-1)p_i - \eps/2 \leq \abs{f_i^{-1}\big([N]\big)} \leq (N+1)p_i + \eps/2.$$
So, $d(f_i) = p_i$.

Let $Y_0 = Y \setminus f(X) = Y \setminus \big(\cup_{i\in\N} f_i(\N)\big)$. By the bound on $\abs{f_i^{-1}\big([N]\big)}$, we get that $Y_0$ is not finite. Let $y_1 < y_2 < \cdots$ be an enumeration of $Y_0$, and define $f_0\colon\N\to\N$ by $f_0(n) = y_n$. Again, by the bound on $\abs{f_i^{-1}\big([N]\big)}$, we get $d(f_0) = p_0$.

To complete the proof, and show that the coloring $\{f_i\}_{i\in\N\cup\{0\}}$ satisfies the conditions in the lemma, we need to show that for $i\in\N$ we have $\text{QR}(f_i) \leq 1 + 10 \eps$. For $i\in\N$, we obviously have 
$$\frac{1-\eps}{p_i} - 2 \leq f_i(n+1) - f_i(n) \leq \frac{1+\eps}{p_i} + 2,$$
which implies $\text{QR}(f_i) \leq \frac{1 + \eps + 2 p_i}{1 - \eps - 2 p_i}$, and since $p_i < \eps < 1/10$, we get $\text{QR}(f_i) \leq \frac{1 + 3\eps}{1- 3\eps} < 1 + 10\eps$.
\end{proof}

Now, we state and prove the main proposition for this case.

\begin{proposition}
\label{prop:big_2}
  Let $(p_0, p_1, p_2, \dots)$ be a probability distribution, $0 < \eps < 1/10$, and $p_0 > 1-\eps$. Then, for each natural number $1<c<\frac{1}{10\eps}$ there is a coloring $\{f_i\}_{i\in\N\cup\{0\}}$ of $\N$ such that:
\begin{enumerate}
    \item $d(f_i) = p_i$ for $i=0,1,2,\ldots$.
    \item $\text{QR}_1(f_i) \leq 1+10c\eps$ for $i=1,2,\ldots$.
    \item $\text{QR}_c(f_0) \leq 1+\frac{2}{c}$
\end{enumerate}
\end{proposition}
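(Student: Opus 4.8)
The plan is to reduce to Hall's marriage theorem, much as in Lemma~\ref{lemma:small-intermediate}, but to route every occurrence of the small characters $1,2,\dots$ into a fixed sparse, highly regular ``reservoir'', so that the positions left over for color $0$ are automatically regular at scale $c$. Write $\eps' = 1-p_0$; then $0<\eps'<\eps$, and since $c<\frac1{10\eps}$ we get $\eps'<\frac1{10c}<\frac1{c+2}$. I may assume $\Sigma$ is finite (truncate the distribution and pass to a limit of the resulting colorings at the end), or run the Hall argument directly for countable $\Sigma$ with a short extra density computation. Fix
$$R \;=\; (c+2)\,\N \;=\; \{\,c+2,\ 2(c+2),\ 3(c+2),\ \dots\,\},$$
of density $\frac1{c+2}>\eps'$: it satisfies $|R\cap[a,b)|\le\frac{b-a}{c+2}+1$, and any block of $m(c+2)$ consecutive integers meets $R$ in at least $m$ points.

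For each small character $i\ge1$ set $m_i=\lceil \eps'/p_i\rceil$, so that $\sum_{i\ge1}\frac1{m_i}\le\sum_{i\ge1}\frac{p_i}{\eps'}=1$ while $m_ip_i\le\eps'+p_i$ is $O(\eps')$; let $w_i=(m_i+1)(c+2)$ and, for $n\in\N$, let $W_i(n)$ be the block of $w_i$ consecutive integers around $n/p_i$. Since $w_ip_i=O((c+2)\eps')<1$, the blocks $W_i(n)$ ($n\in\N$) are pairwise disjoint, and each contains at least $m_i$ points of $R$. On the bipartite graph with parts $X=\bigsqcup_{i\ge1}\{(i,n):n\in\N\}$ and $R$, with $(i,n)$ joined to every point of $W_i(n)\cap R$, each vertex of $X$ has degree $\ge m_i$; disjointness of the blocks reduces Hall's condition $|N_X(A)|\le|A|$ (finite $A\subseteq R$) to $\sum_{i\ge1}\frac1{m_i}\le1$. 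Hall's theorem yields a matching saturating $X$; let $f_i$ be its restriction to $\{(i,n):n\in\N\}$ and let $f_0$ enumerate $\N\setminus\bigcup_{i\ge1}f_i(\N)$ in increasing order.

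Now the verifications. From $f_i(n)\in W_i(n)$ we get $\bigl||f_i^{-1}([N])|-Np_i\bigr|=O(1)$, so $d(f_i)=p_i$; summing over the (finitely many) $i\ge1$, $d(f_0)=1-\eps'=p_0$. Consecutive gaps of $f_i$ lie in $\bigl[\frac1{p_i}-w_i-2,\ \frac1{p_i}+w_i+2\bigr]$, so by Claim~\ref{claim:min-gap}(1),
$$\text{QR}_1(f_i)=\text{QR}(f_i)\le\frac{1+(w_i+2)p_i}{1-(w_i+2)p_i},$$
and since $(w_i+2)p_i=(c+2)(m_i+1)p_i+2p_i=O(c\eps)$ with $10c\eps<1$, a short computation gives $\text{QR}_1(f_i)\le1+10c\eps$. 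For $f_0$: every position carrying a small character lies in $R$, so a window containing exactly $r$ values of $f_0$ has length $\ell$ with $\ell-r\le|R\cap(\text{window})|\le\frac{\ell}{c+2}+1$, i.e.\ $\ell\le\frac{(r+1)(c+2)}{c+1}$, whereas a window containing $q$ values of $f_0$ has length $\ge q$. Plugging these bounds into Definition~\ref{def:k-q-r} for $q,r\ge c$,
$$\text{QR}_c(f_0)\ \le\ \frac{q}{r}\cdot\frac{(r+1)(c+2)/(c+1)}{q}\ =\ \frac{r+1}{r}\cdot\frac{c+2}{c+1}\ \le\ \frac{c+1}{c}\cdot\frac{c+2}{c+1}\ =\ 1+\frac{2}{c}.$$

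The step I expect to be the real obstacle is the bookkeeping in the Hall argument: one must choose the reservoir demands $m_i$ so that simultaneously $\sum_i\frac1{m_i}\le1$ and $(c+2)(m_i+1)p_i$ stays small enough to force $\text{QR}(f_i)\le1+10c\eps$, and these two requirements pull against each other precisely when almost all of the mass $\eps'$ sits on a single small character -- there one exploits that $m_i=1$ already suffices for Hall and that $\eps'<\eps$. (Properties (1)--(2) alone would follow more cheaply by applying Lemma~\ref{lemma:small-intermediate} to the rescaled distribution $(1-c\eps',\,cp_1,\,cp_2,\dots)$ with parameter $c\eps$ and composing each color-$i$ function, $i\ge1$, with $n\mapsto cn$ via Lemma~\ref{lemma:composition} and Claim~\ref{claim:min-gap}; the essential extra content of the proposition is the scale-$c$ control of the leftover color $0$, which the reservoir construction supplies directly.)
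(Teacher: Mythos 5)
Your approach is genuinely different from the paper's. You run a single direct Hall's-marriage argument, routing every small character into a fixed sparse reservoir $R=(c+2)\N$; the paper instead applies Lemma~\ref{lemma:small-intermediate} to the rescaled distribution $\text{expand}_c(p)$, whose $0$-mass is $1-c(1-p_0)>1-c\eps$ with $c\eps<1/10$, and then stretches the resulting coloring by $n\mapsto cn$, which sends every small character to a multiple of $c$. The alternative you sketch in your closing parenthesis \emph{is} the paper's proof, and you undersell it: once all small characters sit on $c\N$, the leftover color $0$ contains every non-multiple of $c$, so a window of length $\ell$ has at most $\lceil\ell/c\rceil$ non-$0$ positions, and the same ratio computation you carried out for $R=(c+2)\N$ gives $\text{QR}_c(f_0)\le\sup_{r\ge c}\ell_r/r=(c+2)/c=1+2/c$. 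In other words, the rescale-and-compose route already delivers part~(3), not merely parts~(1) and (2). Your own window count for part~(3) with spacing $c+2$ is correct, and is in fact cleaner than the paper's: the intermediate inequality $f_0(k+p)-f_0(k)\le p\,c/(c-1)$ asserted there is not quite true (e.g.\ for $c=3$, $p=3$ one can have $f_0(k+3)-f_0(k)=5>4.5$), although the final bound $1+2/c$ does survive.

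The genuine gap is in part~(2). With $m_i=\lceil\eps'/p_i\rceil$ and $w_i$ on the order of $m_i(c+2)$, the worst case has $\eps'/p_i$ just above $1$ (one small character carrying almost all of $\eps'$), which forces $m_i=2$ and hence $w_ip_i$ up to roughly $2(c+2)\eps'$. Then
\begin{align*}
\text{QR}_1(f_i)\ \le\ \frac{1+w_ip_i}{1-w_ip_i}\ \approx\ \frac{1+2(c+2)\eps'}{1-2(c+2)\eps'},
\end{align*}
and for $c=2$ with $\eps$ near $\tfrac1{10c}=\tfrac1{20}$ (so $\eps'$ can be near $\eps$) this is about $1.4/0.6\approx 2.3$, which exceeds the target $1+10c\eps<2$. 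Your suggestion that ``$m_i=1$ already suffices for Hall'' does not rescue this: setting $m_i=1$ for the dominant small character makes $1/m_i=1$, and then $\sum_j 1/m_j>1$ as soon as any other small character has positive mass, so the Hall estimate you derived from block-disjointness is no longer available. Repairing the direct construction would require either a denser reservoir (which then endangers part~(3), since $(c+1)\N$ or $c\N$ no longer give $1+2/c$ by your window count) or a sharper choice of the demands $m_i$; the rescale-and-compose proof sidesteps the corner entirely because the $\text{QR}_1$ bound is inherited \emph{exactly} from Lemma~\ref{lemma:small-intermediate} under $n\mapsto cn$, with no loss of constant.
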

\begin{proof}
This follows essentially immediately from the previous proposition and Lemma \ref{lemma:composition}. Given a probability distribution $p$ on $\N\cup\{0\}$, for small $c\in\N$ let $\text{expand}_c(p)$ be the probability distribution on $\N\cup\{0\}$ defined by
$$\text{expand}_c(p)(n) = 
\begin{cases}
  c\ p_n & \text{ if } n > 0,\\
  1 - c \sum_{n>0} p_n & \text{ if } n = 0.
\end{cases}
$$

Let $\{g_i\}_{i\in\N\cup\{0\}}$ be the coloring of $\N$ for the probability distribution $\text{expand}_c(p)$ given by Lemma ~\ref{lemma:small-intermediate}. Using $\{g_i\}_{i\in\N\cup\{0\}}$, we define the coloring $\{f_i\}_{i\in\N\cup\{0\}}$ in the following way.
\begin{itemize}
  \item For $i>0$, let $f_i(n) = c g_i(n)$ for $n\in\N$.
  \item Let $Y_0 = \{ m\in\N \ | \ m \neq 0 \mod c\} \cup \{c f_0(n) \ | \ n\in\N\}$. Let $y_1 < y_2 < \cdots$ be an enumeration of $Y_0$. Define $f_0\colon \N\to\N$ by $f_0(n) = y_n$ for $n\in\N$.
\end{itemize}
In words, we assign color $0$ to any $m\in\N$ with $m\neq 0 \mod c$, and we assign color $i$ to any $c g_i(m)$. It is easy to see that $\{f_i\}_{i\in\N\cup\{0\}}$ is a coloring of $\N$.

Now, we show the conditions in the statement for the coloring $\{f_i\}_{i\in\N\cup\{0\}}$. The first two conditions follow easily. To show the third condition, we need to show that for any $q,r\geq c$ and any $n,m\in\N$, we have
$$\frac{q}{r}\frac{f(n+r)-f(n)}{f(m+q)-f(m)} \leq 1 + \frac{2}{c}.$$
Since every element which is not $0 \mod c$ is colored $0$, for any $p\geq c$ and $k\in\N$, we have $p\leq f(k+p) - f(k) \leq p\frac{c}{c-1}$. So, we have
$$\frac{q}{r}\frac{f(n+r)-f(n)}{f(m+q)-f(m)} \leq \frac{q}{r}\frac{r \frac{c}{c-1}}{q} = \frac{c}{c-1} \leq 1 + \frac{2}{c}.$$
\end{proof}

\subsection{Case III} Here we do not put any assumptions on the probability distribution, but we prove a weaker result than the one in Theorem ~\ref{thm:eps-quasi-regular}.

\begin{proposition}
\label{prop:big_3}
Suppose $(p_1, p_2, \dots)$ is a probability distribution. Then there exists a coloring $\{f_i\}_{i\in\N}$ of $\N$ such that
\begin{enumerate}
    \item $d(f_i) = p_i$ for $i\in\N$, and
    \item for all $k \geq 3$, $\text{QR}_k(f_i) \leq (k+2)/(k-2)$ for $i\in\N$.
\end{enumerate}
\end{proposition}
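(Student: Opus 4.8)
The plan is to feed the distribution straight into Lemma~\ref{lem:one-sided-to-two-sided} and read the bound off the resulting two‑sided equidistribution estimate. Concretely, I would apply Lemma~\ref{lem:one-sided-to-two-sided} with $\Sigma=\N$ and $\mathbf p=(p_1,p_2,\dots)$ to obtain a sequence $s\in\N^\N$ with $d_s=\mathbf p$ and $\bigl||s^{-1}(\{i\})\cap[M,M+N)|-Np_i\bigr|<2$ for all $M,N\in\N$ and all $i$. One should first reduce to the full‑support case $p_i>0$ for all $i$: a color of probability zero is forced by this estimate to occur only finitely often, so it cannot be part of a genuine coloring, and one simply restricts attention to $\supp(\mathbf p)$. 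With every $p_i>0$, each color occurs infinitely often, so $s$ determines a coloring $\{f_i\}_{i\in\N}$ of $\N$, where $f_i$ is the increasing enumeration of $s^{-1}(\{i\})$; then $d(f_i)=d_s(i)=p_i$, which is condition~(1).

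For condition~(2) the idea is to turn the equidistribution estimate into a two‑sided bound on the gap sums $f_i(n+r)-f_i(n)$. Fix a color $i$ and $n,r\in\N$. The half‑open block $[f_i(n),\,f_i(n)+N)$ with $N=f_i(n+r)-f_i(n)$ contains exactly the occurrences of color $i$ numbered $n,n+1,\dots,n+r-1$, i.e.\ exactly $r$ of them, so the estimate with $M=f_i(n)$ gives $|r-Np_i|<2$, that is,
$$\frac{r-2}{p_i}<f_i(n+r)-f_i(n)<\frac{r+2}{p_i}.$$
Now fix $k\geq 3$, take $q,r\geq k$ and $n,m\in\N$; since $q-2\geq k-2\geq 1>0$, combining the upper bound for the numerator with the lower bound for the denominator gives
$$\frac{q}{r}\cdot\frac{f_i(n+r)-f_i(n)}{f_i(m+q)-f_i(m)}<\frac{q}{r}\cdot\frac{(r+2)/p_i}{(q-2)/p_i}=\frac{q}{q-2}\cdot\frac{r+2}{r}.$$
Since $\tfrac{q}{q-2}=1+\tfrac{2}{q-2}$ and $\tfrac{r+2}{r}=1+\tfrac 2r$ are decreasing on $[k,\infty)$, the product is at most $\tfrac{k}{k-2}\cdot\tfrac{k+2}{k}=\tfrac{k+2}{k-2}$, and taking the supremum over $q,r\geq k$ and $n,m$ yields $\text{QR}_k(f_i)\leq\tfrac{k+2}{k-2}$ for every $i$.

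I do not expect a genuine obstacle: the substance is carried entirely by Tijdeman's theorem as packaged in Lemma~\ref{lem:one-sided-to-two-sided}, and everything else is bookkeeping — choosing the half‑open block so it contains exactly $r$ occurrences, checking $q-2>0$ so the lower bound on the denominator is usable, and the one‑line monotonicity remark that pins the worst case at $q=r=k$. The only point worth stating carefully is the reduction to the full‑support case, without which $\{f_i\}_{i\in\N}$ need not be a coloring.
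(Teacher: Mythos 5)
Your proof is correct and follows essentially the same route as the paper: both apply Lemma~\ref{lem:one-sided-to-two-sided}, take the corresponding coloring, and use the two-sided discrepancy estimate on the block $[f_i(n),\,f_i(n)+N)$ with $N=f_i(n+r)-f_i(n)$, which contains exactly $r$ occurrences of $i$, to obtain a two-sided bound on the gap sums and hence the $\text{QR}_k$ inequality. The only differences are cosmetic — you bound the ratio directly via the monotonicity of $\tfrac{q}{q-2}\cdot\tfrac{r+2}{r}$ rather than passing through the paper's intermediate claim $\tfrac{k-2}{kp_i}\leq\tfrac{f_i(n+r)-f_i(n)}{r}\leq\tfrac{k+2}{kp_i}$, and you make explicit the full-support caveat that the paper leaves implicit.
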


\begin{proof}
Lemma \ref{lem:one-sided-to-two-sided} applied to $\Sigma=\N$ and $p$, gives us a sequence $s\in\Sigma^\N$ with $d_s = p$ and 
\begin{align}
\label{eq:low_dis}
    \abs{ \frac{ \abs{s^{-1}(\{i\}) \cap [M, M+N)} }{N} - p_i } < \frac{2}{N}
\end{align}
for all $M, N\in\N$, and $i\in\N$. Let $\{f_i\}_{i\in\N}$ be the corresponding coloring to $s$. For each $i\in\N$, we obviously have $d(f_i) = d_s(i)$. To complete the proof, we need to show that for all $i\in\N$ and $k \geq 3$ we have $\text{QR}_k(f_i) \leq (k+2)/(k-2)$. Fix $i\in\N$ and $k\geq 3$. To show the inequality, we need to show that for all $q,r\geq k$ and $m,n\in\N$ we have
$$\frac{q}{r}\frac{f_i(n+r)-f_i(n)}{f_i(m+q)-f_i(m)} \leq \frac{k+2}{k-2},$$
which follows easily from the following claim.
\begin{claim}
For any $n\in\N$ and $r\geq k$ we have
$$\frac{k-2}{k p_i} \leq \frac{f(n+r)-f(n)}{r} \leq \frac{k+2}{k p_i}.$$
\end{claim}
\begin{proof}
Note that
$$\abs{s^{-1}(i)\cap \big[f_i(n), f_i(n+r)\big)} = r.$$
So, by \eqref{eq:low_dis} for $M=f(n)$ and $N=f(n+r)-f(n)$, we get
$$\abs{\frac{r}{f(n+r)-f(n)} - p_i} \leq \frac{2}{f(n+r)-f(n)},$$
which implies
$$\abs{\frac{f(n+r)-f(n)}{r} - \frac{1}{p_i}} \leq \frac{2}{r p_i}.$$
The result follows easily from this, and the fact that $r\geq k$.
\end{proof}
The proof of the claim completes the proof of the proposition.
\end{proof}

\subsection{Proof of Theorem ~\ref{thm:eps-quasi-regular}} 
We have now all the necessary ingredients to prove the theorem. Let $\Sigma$ be a countable alphabet and fix $0 < \eps < 1$. Given the correspondence between colorings of $\N$ and sequences in $\Sigma^\N$, to prove the theorem we just need to show there exists a $\delta>0$ such that whenever $p$ is a probability distribution on $\Sigma$ with $p_\sigma < \delta$ for all $\sigma\in\Sigma$, then there exists a coloring $\{f_\sigma\}_{\sigma\in\Sigma}$ of $\N$ with $d(f_\sigma) = p_\sigma$ and $\text{QR}(f_\sigma) = \text{QR}_1(f_\sigma) \leq 1+\eps$ for all $\sigma\in\Sigma$. We show $\delta = \eps^6 / 10^{20}$ works for us.

Let $\delta = \eps^6 / 10^{20}$ and assume that $p$ is a probability distribution on $\Sigma$ with $p_\sigma < \delta$ for all $\sigma\in\Sigma$. For $i\in\N$ let
$$\Sigma_i = \{\sigma\in\Sigma \ | \ \delta/2^i < p_\sigma \leq \delta/2^{i-1} \}.$$
We call each $\Sigma_i$ a bucket. Let $N = 10^{16} / \eps^4$, $I = \{i\in\N \ | \ \abs{\Sigma_i} > N\}$ and $J = \N \setminus I$. For $L\in\{I,J\}$, let $\Sigma_L = \cup_{l\in L}\Sigma_l$, $p_L = \sum_{\sigma\in\Sigma_L} p_\sigma$. In words, $I$ is the indices of the buckets that have more than $N$ characters, and $J$ is the set of indices of the buckets with less than $N$ characters.

\begin{claim}
\label{claim:g-sigma-coloring}
  There exists a coloring $\{g_\sigma\}_{\sigma\in\Sigma_I}$ of $\N$ such that
  \begin{enumerate}
    \item $d(g_\sigma)$ is proportional to $p_\sigma$ for each $\sigma\in\Sigma_I$,
    \item $\text{QR}_1(g_\sigma) \leq 1 + \frac{1000}{N^{1/4}} = 1 + \frac{\eps}{10}$ for all $\sigma\in\Sigma_I$, and
    \item $\min_{n\in\N} \big( g_\sigma(n+1) - g_\sigma(n) \big) \geq \frac{N}{4}$ for all $\sigma\in\Sigma_I$.
  \end{enumerate}
\end{claim}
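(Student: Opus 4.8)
The plan is to build $g_\sigma$ as a two-level composition of colorings: an outer coloring that distributes the buckets $\{\Sigma_i\}_{i\in I}$, and, inside each bucket, an inner coloring that distributes the characters of that bucket. The outer coloring will be fairly regular via Proposition~\ref{prop:big_3}, the inner ones very regular via Proposition~\ref{prop:big_1} (each such bucket has more than $N>2^9$ characters), and Lemma~\ref{lemma:composition} will combine the two estimates. We may assume $I\neq\emptyset$, as otherwise there is nothing to prove.

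First I would set up the pieces. For $i\in I$ the bucket $\Sigma_i$ is finite (every $\sigma\in\Sigma_i$ has $p_\sigma>\delta/2^i$), has $\abs{\Sigma_i}>N>2^9$ elements, and the normalized distribution $q^i_\sigma\coloneqq p_\sigma/p_{\Sigma_i}$ on $\Sigma_i$ has ratio of largest to smallest entry at most $2$, since $\delta/2^i<p_\sigma\le\delta/2^{i-1}$ throughout $\Sigma_i$. Hence Proposition~\ref{prop:big_1} applied to $\Sigma_i$ with the distribution $(q^i_\sigma)_{\sigma\in\Sigma_i}$ gives a coloring $\{f^i_\sigma\}_{\sigma\in\Sigma_i}$ of $\N$ with $d(f^i_\sigma)=q^i_\sigma$ and
$$\text{QR}_1(f^i_\sigma)<1+64\sqrt{\frac{\log\abs{\Sigma_i}}{\abs{\Sigma_i}}}\le 1+64\sqrt{\frac{\log N}{N}},$$
the last step using that $x\mapsto(\log x)/x$ is decreasing for $x>e$ and $\abs{\Sigma_i}>N$. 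Separately, $(p_{\Sigma_i}/p_I)_{i\in I}$ is a probability distribution on $I$, so Proposition~\ref{prop:big_3} gives a coloring $\{h_i\}_{i\in I}$ of $\N$ with $d(h_i)=p_{\Sigma_i}/p_I$ and $\text{QR}_k(h_i)\le(k+2)/(k-2)$ for all $k\ge 3$.

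Next I would define $g_\sigma\coloneqq h_i\circ f^i_\sigma$ for $\sigma\in\Sigma_i$, $i\in I$. Since each $m\in\N$ equals $h_i(k)$ for a unique pair $(i,k)$ and that $k$ equals $f^i_\sigma(n)$ for a unique pair $(\sigma,n)$ with $\sigma\in\Sigma_i$, the family $\{g_\sigma\}_{\sigma\in\Sigma_I}$ is a coloring of $\N$; and for a composition of well-distributed strictly increasing functions one has $d(h_i\circ f^i_\sigma)=d(h_i)\,d(f^i_\sigma)$, so $d(g_\sigma)=(p_{\Sigma_i}/p_I)(p_\sigma/p_{\Sigma_i})=p_\sigma/p_I$, which is proportional to $p_\sigma$ — this is condition (1). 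The key estimate for conditions (2) and (3) is a lower bound on the inner gaps: since $d(f^i_\sigma)=p_\sigma/p_{\Sigma_i}<(\delta/2^{i-1})/(N\delta/2^i)=2/N$ and $\text{QR}(f^i_\sigma)=\text{QR}_1(f^i_\sigma)<2$ (because $64\sqrt{(\log N)/N}<1$ for $N=10^{16}/\eps^4$), Claim~\ref{claim:min-gap} gives $f^i_\sigma(n+1)-f^i_\sigma(n)\ge 1/\bigl(d(f^i_\sigma)\,\text{QR}(f^i_\sigma)\bigr)>N/4$ for every $n$. As $h_i$ is strictly increasing and $\N$-valued, $g_\sigma(n+1)-g_\sigma(n)\ge f^i_\sigma(n+1)-f^i_\sigma(n)>N/4$, which is condition (3).

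Finally, for condition (2) I would put $k_0=\floor{N/4}\ge 3$; then $f^i_\sigma(n+1)-f^i_\sigma(n)\ge k_0$ for all $n$, so Lemma~\ref{lemma:composition} with $l=1$ yields
$$\text{QR}_1(g_\sigma)=\text{QR}_1(h_i\circ f^i_\sigma)\le \text{QR}_{k_0}(h_i)\cdot\text{QR}_1(f^i_\sigma)\le \frac{k_0+2}{k_0-2}\left(1+64\sqrt{\frac{\log N}{N}}\right).$$
It remains to check the right-hand side is at most $1+1000/N^{1/4}$. Since $N\ge 10^{16}$ we have $\log N\le\sqrt{N}$, hence $64\sqrt{(\log N)/N}\le 64/N^{1/4}$, and $(k_0+2)/(k_0-2)=1+4/(k_0-2)\le 1+32/N$; multiplying and using that $32/N$ and $2048/N^{5/4}$ are far below $1/N^{1/4}$ for such $N$, the product is at most $1+65/N^{1/4}\le 1+1000/N^{1/4}$, and $1000/N^{1/4}=\eps/10$ since $N=10^{16}/\eps^4$. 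This establishes (2). I expect the only mildly delicate points to be the verification that density multiplies under composition of well-distributed functions (so $\{g_\sigma\}$ has the stated density) and getting the two error terms — the $64\sqrt{(\log N)/N}$ from Proposition~\ref{prop:big_1} and the $(k_0+2)/(k_0-2)$ factor from Proposition~\ref{prop:big_3} — to be jointly absorbed into the allotted slack $1000/N^{1/4}=\eps/10$.
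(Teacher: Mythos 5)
Your proposal is correct and follows essentially the same route as the paper: apply Proposition~\ref{prop:big_1} inside each large bucket, Proposition~\ref{prop:big_3} across buckets, compose, and control the outer quasi-regularity with Lemma~\ref{lemma:composition} using a lower bound on the inner gaps obtained from Claim~\ref{claim:min-gap}. The only (minor, and if anything cleaner) deviation is in condition (3): you get the gap bound directly from the fact that the strictly increasing $\N$-valued outer coloring cannot shrink gaps, whereas the paper re-applies Claim~\ref{claim:min-gap} to $g_\sigma$ using $d(g_\sigma)<2/N$ and the already-established bound on $\text{QR}_1(g_\sigma)$.
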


First, we use this claim to prove the Theorem, then, we will prove this claim. By Claim~\ref{claim:g-sigma-coloring}, there is a coloring $\{g_\sigma\}_{\sigma\in\Sigma_I}$ of $\N$ with the mentioned properties. Let $\Sigma' = \{ I \} \cup \Sigma_J$ and $\eps' = \eps^2/1000$. Let $p'$ be the probability distribution on $\Sigma'$ defined by $p'_I = p_I$ and $p'_\sigma = p_\sigma$ for each $\sigma\in\Sigma_J$. Note that
$p_J = \sum_{j\in J} \sum_{\sigma\in\Sigma_j} p_\sigma \leq 2N\delta < \eps'$. So, $p'_I = p_I > 1 - \eps'$. Let $c' = \ceil{\frac{10}{\eps}}$ and note that $1 < c' < \frac{1}{10\eps'}$. We can apply Proposition~\ref{prop:big_2} on $p', \eps', c'$ and get a coloring $\{g'_\sigma\}_{\sigma\in\Sigma'}$ such that
\begin{enumerate}
  \item $d(g'_\sigma) = p'_\sigma$ for all $\sigma\in\Sigma'$.
  \item $\text{QR}_1(g'_\sigma) \leq 1 + 10 c' \eps' = 1 + 10\ceil{\frac{10}{\eps}}\frac{\eps^2}{1000} \leq 1 + \eps$ for all $\sigma\in\Sigma_J$.
  \item $\text{QR}_{c'}(g'_I) \leq 1 + \frac{2}{c'} \leq 1 + \frac{\eps}{5}$.
\end{enumerate}

Now, we define the coloring $\{f_\sigma\}_{\sigma\in\Sigma}$ of $\N$. For $\sigma\in\Sigma_J$, let $f_\sigma = g'_\sigma$. So, from the first two properties of the coloring $\{g'_\sigma\}_{\sigma\in\Sigma'}$, we get $d(f_\sigma) = p_\sigma$ and $\text{QR}_1(f_\sigma) \leq 1+\eps$ for all $\sigma\in\Sigma_J$.

For $\sigma\in\Sigma_I$, let $f_\sigma = g'_I \circ g_\sigma$. Fix $\sigma\in\Sigma_I$. By the third property of $g_\sigma$ in Claim ~\ref{claim:g-sigma-coloring} we have 
\begin{align*}
  \min_{n\in\N}\big(g_\sigma(n+1)-g_\sigma(n)\big) \geq N/4 = 10^{16}/(4 \eps^4) \geq c'.
\end{align*}

We can apply Lemma ~\ref{lemma:composition} for $f = g'_I$, $g = g_\sigma$, $k = c'$, and $l = 1$, and get $\text{QR}_1(f_\sigma) \leq (1+\frac{\eps}{5}) (1+\frac{\eps}{10}) \leq 1+\eps$. Moreover, it is straightforward to see that $d(f_\sigma) = p_\sigma$.

So, we showed that for each $\sigma\in\Sigma$, we have $d(f_\sigma) = p_\sigma$ and $\text{QR}_1(f_\sigma) \leq 1 + \eps$. It is easy to show that $\{f_\sigma\}_{\sigma\in\Sigma}$ is a coloring of $\N$. So, the proof of the theorem is complete except from the proof of Claim ~\ref{claim:g-sigma-coloring}, to which we turn now.

\begin{proof}[Proof of Claim ~\ref{claim:g-sigma-coloring}]
For each $i\in I$, let $p^i$ be the normalization of the restriction of $p$ to $\Sigma_i$. So, $p^i$ is a probability distribution on $\Sigma$, and $\frac{\max_{\sigma\in\Sigma_i} p^i_\sigma}{\min_{\sigma\in\Sigma_i} p^i_\sigma} \leq 2$. Hence $p^i_\sigma < \frac{2}{N}$ for all $\sigma\in\Sigma_i$.
By Proposition ~\ref{prop:big_1}, for each $i\in I$, we get a coloring $\{e^{i}_\sigma\}_{\sigma\in\Sigma_i}$ of $\N$ with $d(e^i_\sigma) = p^i_\sigma$ and $\text{QR}_1(e^i_\sigma) < 1 + 64\sqrt{\frac{\log N}{N}}$ for each $\sigma\in\Sigma_i$. Moreover, by Claim ~\ref{claim:min-gap}, for each $i\in I$ and $\sigma\in\Sigma_i$ we have
\begin{align*}
  \min_{n\in\N} e^i_\sigma(n+1) - e^i_\sigma(n) & \geq \frac{1}{p^i_\sigma 
  \big( 1+64\sqrt{\frac{\log N}{N}} \big)} \geq \frac{N}{20}.
\end{align*}

Let $q$ be a probability distribution on $I$ with $q_i$ proportional to $\sum_{\sigma\in\Sigma_i} p_\sigma$ for each $i\in I$. By Proposition ~\ref{prop:big_3}, there is a coloring $\{h_i\}_{i\in I}$ of $\N$ with $d(h_i) = q_i$ and $\text{QR}_k(h_i) \leq (k+2)/(k-2)$ for all $k \geq 3$ and $i\in I$. In particular, for $k = \floor{\frac{N}{20}}$, we get
$$\text{QR}_{\floor{\frac{N}{20}}}(h_i) \leq (\floor{\frac{N}{20}} + 2)/(\floor{\frac{N}{20}} - 2) \leq 1 + \frac{160}{N}.$$

For each $i\in I$ and $\sigma\in\Sigma_i$, let $g_\sigma = h_i \circ e^i_\sigma$. Let $\Sigma_I = \cup_{i\in I} \Sigma_i$. It is easy to see that $\{g_\sigma\}_{\sigma\in\Sigma_I}$ is a coloring of $\N$ and for each $\sigma\in\Sigma_I$, $d(g_\sigma)$ is proportional to $p_\sigma$. Note that for each $i\in I$ and $\sigma\in\Sigma_i$ we can apply Lemma~\ref{lemma:composition} to $f = h_i$, $g = e^i_\sigma$, $k = \floor{\frac{N}{20}}$, and $l = 1$, and get
\begin{align}
\label{eq:qr-1-g-i}
  \text{QR}_1(g_\sigma) \leq \big( 1 + \frac{160}{N} \big) \big( 1 + 64\sqrt{\frac{\log N}{N}} \big) \leq 1 + \frac{1000}{N^{1/4}}.
\end{align}
Moreover, for $\sigma\in\Sigma_I$, since $d(g_\sigma)$ is proportional to $p_\sigma$, we get $d(g_\sigma) < 2/N$, and so by Claim~\ref{claim:min-gap} and ~\eqref{eq:qr-1-g-i}, we get
\begin{align}
\label{eq:min-gap-g-sigma}
  \min_{n\in\N} g_\sigma(n+1) - g_\sigma(n) \geq \frac{N}{2 \big( 1 + \frac{1000}{N^{1/4}} \big) } \geq \frac{N}{4}.
\end{align}

\end{proof}

\bibliography{main}
\end{document}